\let\appendixpagenameorig\appendixpagename
\renewcommand{\appendixpagename}{\normalsize \appendixpagenameorig}
\newcommand{\R}{\mathbb{R}}
\newcommand{\Rn}{\mathbb{R}^n}
\newcommand{\operator}[1]{{\normalfont \texttt{#1}}}
\newcommand{\CE}{R}
\newcommand{\Comp}{\textit{Complexity: }}
\newcommand{\Oc}{\mathcal{O}}
\newcommand{\ind}{r}
\newcolumntype{C}[1]{>{\centering\arraybackslash}m{#1}}
\definecolor{Green}{rgb}{0,0.5,0}
\newcommand{\myparagraph}[1]{\paragraph{#1}\mbox{}\\}
\DeclareMathSymbol{\shortminus}{\mathbin}{AMSa}{"39}
\journalname{}
\begin{document}

\title{Constrained Polynomial Zonotopes}

\author{Niklas Kochdumper \and Matthias Althoff}

\institute{Niklas Kochdumper \at
           Technical University of Munich\\
           \email{niklas.kochdumper@tum.de}
           \and
           Matthias Althoff\at
           Technical University of Munich\\
           \email{althoff@tum.de}   
}

\date{Received: date / Accepted: date}

\maketitle

\begin{abstract}
We introduce constrained polynomial zonotopes, a novel non-convex set representation that is closed under linear map, Minkowski sum, Cartesian product, convex hull, intersection, union, and quadratic as well as higher-order maps. We show that the computational complexity of the above-mentioned set operations for constrained polynomial zonotopes is at most polynomial in the representation size. The fact that constrained polynomial zonotopes are generalizations of zonotopes, polytopes, polynomial zonotopes, Taylor models, and ellipsoids further substantiates the relevance of this new set representation. In addition, the conversion from other set representations to constrained polynomial zonotopes is at most polynomial with respect to the dimension, and we present efficient methods for representation size reduction and for enclosing constrained polynomial zonotopes by simpler set representations. 
\keywords{Constrained polynomial zonotopes \and non-convex set representations \and set-based computing.}
\end{abstract}

\section{Introduction}

Many applications like, e.g., controller synthesis, state estimation, and formal verification, are based on algorithms that compute with sets \cite{Scott2014,Combastel2015,Bravo2006,Asarin2006}. The performance of these algorithms therefore mainly depends on efficient set representations. Ideally, a set representation is not only closed under all relevant set operations, but can also compute these efficiently. We introduce constrained polynomial zonotopes, a novel non-convex set representation that is closed under linear map, Minkowski sum, Cartesian product, convex hull, intersection, union, and quadratic as well as higher-order maps. The computational complexity for these operations is at most polynomial in the representation size. Together with our efficient methods for representation size reduction, constrained polynomial zonotopes are well suited for many algorithms computing with sets.

\subsection{Related Work}

Over the past years, many different set representations have been used in or developed for set-based computations. Relations between typical set representations are illustrated in Fig. \ref{fig:RelationsSetRepresentations}. Moreover, Table \ref{tab:setRep} shows which set representations are closed under relevant set operations.

\begin{figure}
\begin{center}
	\includegraphics[width = 0.75 \textwidth]{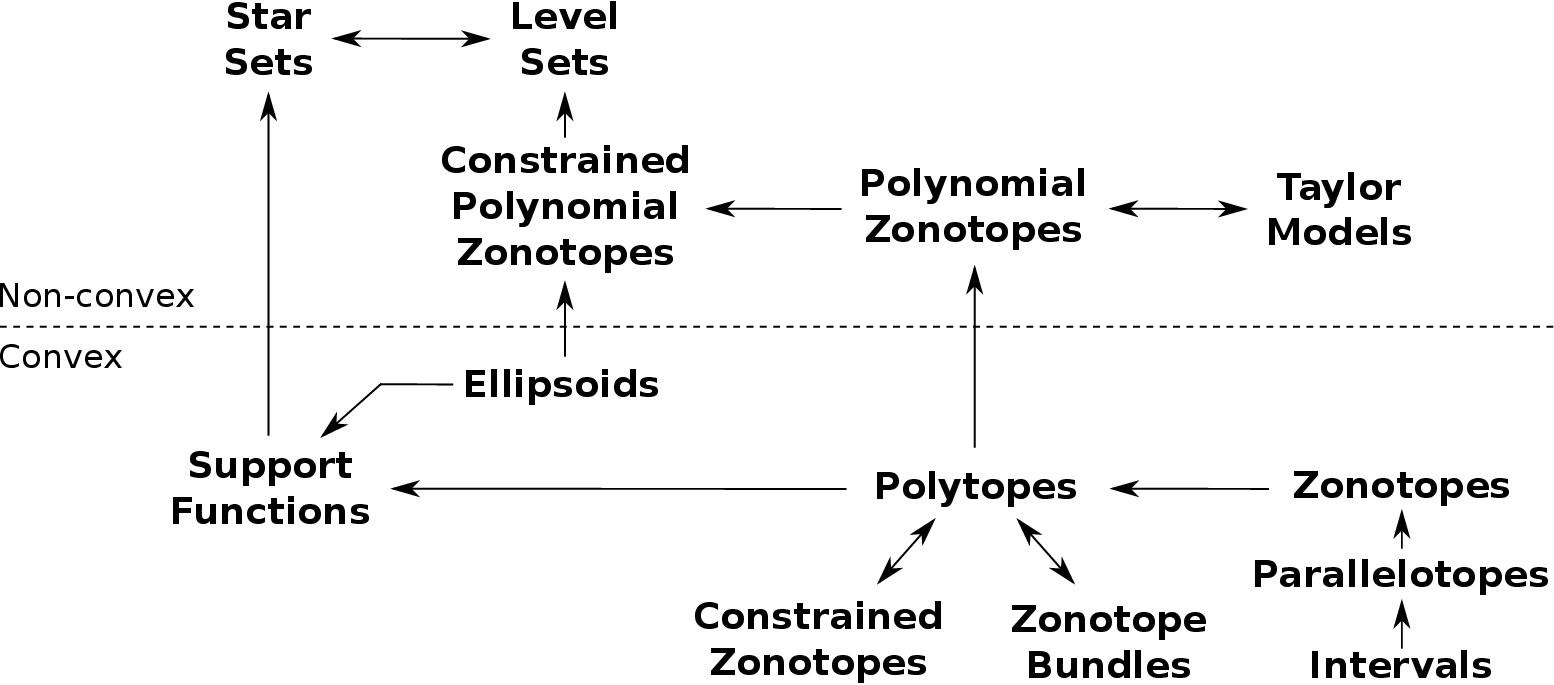}
	\caption{Visualization of the relations between the different set representations, where A $\rightarrow$ B denotes that B is a generalization of A.}
	\label{fig:RelationsSetRepresentations}
	\end{center}
	\vspace{-0.3cm}
\end{figure}

\begin{table*}
\begin{center}
\caption{Relation between set representations and set operations. The symbol $\surd$ indicates that the set representation is closed under the corresponding set operation and a closed-form expression for the computation exists, where $(\surd)$ indicates that this holds for linear maps represented by invertible matrices only. The symbol $-$ indicates that the set representation is closed under the corresponding set operation, but no closed-form expression for the computation is known (iterative algorithms, such as Fourier-Motzkin elimination, are not counted as closed-form expressions). The symbol $\times$ indicates that the set representation is not closed under the corresponding set operation. An overview for the computational complexity of set operations is provided in \cite[Table~1]{Althoff2020}.}
\label{tab:setRep}
\renewcommand{\arraystretch}{1.2}
\begin{tabular}{ p{3.6cm} C{0.95cm} C{0.95cm} C{0.95cm} C{0.95cm} C{0.95cm} C{1.05cm} C{0.95cm}}
 \toprule
 \textbf{Set Representation} & \textbf{Lin. Map} & \textbf{Mink. Sum} & \textbf{Cart. Prod.} & \textbf{Conv. Hull} & \textbf{Quad. Map} & \textbf{Inter- section} & \textbf{Union} \\ \midrule 
 Intervals & $\times$ & $\surd$ & $\surd$ & $\times$ & $\times$ & $\surd$ & $\times$ \\
 Parallelotopes & $(\surd)$ & $\times$ & $\surd$ & $\times$ & $\times$ & $\times$ & $\times$ \\
 Zonotopes & $\surd$ & $\surd$ & $\surd$ & $\times$ & $\times$ & $\times$ & $\times$  \\ 
 Polytopes (Halfspace Rep.) & $(\surd)$ & $-$ & $\surd$ & $-$ & $\times$ & $\surd$ & $\times$ \\
 Polytopes (Vertex Rep.) & $\surd$ & $\surd$ & $\surd$ & $\surd$ & $\times$ & $-$ & $\times$ \\
 Constrained Zonotopes & $\surd$ & $\surd$ & $\surd$ & $\surd$ & $\times$ & $\surd$ & $\times$ \\
 Zonotope Bundles & $(\surd)$ & $-$ & $\surd$ & $-$ & $\times$ & $\surd$ & $\times$ \\
 Ellipsoids & $\surd$  & $\times$ & $\times$ & $\times$ & $\times$ & $\times$ & $\times$ \\
 Support Functions & $\surd$ & $\surd$ & $\surd$ & $\surd$ & $\times$ & $-$ & $\times$ \\
 Taylor Models & $\surd$ & $\surd$ & $\surd$ & $\surd$ & $\surd$ & $\times$ & $\times$ \\
 Polynomial Zonotopes & $\surd$ & $\surd$ & $\surd$ & $\surd$ & $\surd$  & $\times$ & $\times$ \\
 Level Sets & $(\surd)$  & $-$ & $\surd$ & $-$ & $-$ & $\surd$ & $\surd$\\
 Star Sets & $\surd$ & $\surd$ & $\surd$ & $-$ & $-$ & $-$ & $-$\\
 Con. Poly. Zonotopes & $\surd$ & $\surd$ & $\surd$ & $\surd$ & $\surd$  & $\surd$ & $\surd$\\ 
 \bottomrule 
\end{tabular}
\end{center}
\vspace{-0.3cm}
\end{table*}

All convex sets can equivalently be represented by their support function \cite[Chapter~C.2]{Hiriart2012}. In addition, linear map, Minkowski sum, Cartesian product, and convex hull are trivial to compute for support functions \cite[Prop.~2]{Guernic2010}. Even though support functions are closed under intersection, there exists no closed-form expression for the computation of this operation, and support functions are not closed under union and quadratic maps. Ellipsoids and polytopes are special cases of sets represented by support functions \cite[Prop.~1]{Guernic2010}. While ellipsoids are only closed under linear map (see Table~\ref{tab:setRep}), polytopes are closed under linear map, Minkowski sum, Cartesian product, convex hull, and intersection \cite[Chapter~3.1]{Gruenbaum2003}. The computational complexity of the set operations for polytopes depends on the used representation \cite{Tiwary2008}, where the two main representations for polytopes are the halfspace representation and the vertex representation: Linear maps represented by invertible matrices and intersections are cheap to compute for the halfspace representation, while linear maps represented by non-invertible matrices, Minkowski sums, and convex hulls are computationally expensive \cite{Tiwary2008}. If redundant points are not removed, computation of linear maps, Minkowski sums, and convex hulls is trivial for the vertex representation, whereas calculating intersections is NP-hard \cite{Tiwary2008}.

An important subclass of polytopes are zonotopes \cite[Chapter~7.3]{Ziegler1995}. Since zonotopes can be represented compactly by so-called generators, they are well suited for the representation of high-dimensional sets. In addition, linear maps, Minkowski sums, and Cartesian products can be computed exactly and efficiently \cite[Table~1]{Althoff2016c}. Two extensions of zonotopes are zonotope bundles \cite{Althoff2011f} and constrained zonotopes \cite{Scott2016}, which are both able to represent any bounded polytope. Constrained zonotopes additionally consider linear equality constraints for the zonotope factors, whereas zonotope bundles represent the set implicitly by the intersection of several zonotopes. Two special cases of zonotopes are parallelotopes, which are zonotopes with linearly independent generators, and multi-dimensional intervals. Since intervals are not closed under linear map, algorithms computing with intervals often split them to obtain a desired accuracy \cite{Jaulin2006}.

Common non-convex set representations are star sets, level sets, Taylor models, and polynomial zonotopes. The concept of star sets \cite{Duggirala2016,Bak2017b} is similar to the one of constrained zonotopes, but logical predicates instead of linear equality constraints are used to constrain the values of the zonotope factors. Level sets of nonlinear functions \cite{Osher2006} can represent any shape. While star sets and level sets are very expressive (see Fig. \ref{fig:RelationsSetRepresentations}), it is for many of the relevant operations unclear how they are computed (see Table~\ref{tab:setRep}). Taylor models \cite{Makino2003} consist of a polynomial and an interval remainder part. A set representation that is very similar to Taylor models are polynomial zonotopes, which were first introduced in \cite{Althoff2013a}. A computationally efficient sparse representation of polynomial zonotopes was recently proposed in \cite{Kochdumper2019}. Due to their polynomial nature, Taylor models and polynomial zonotopes are both closed under quadratic and higher-order maps (see Table~\ref{tab:setRep}). 

In this work we introduce constrained polynomial zonotopes, a novel non-convex set representation that combines the concept of adding equality constraints for the zonotope factors used by constrained zonotopes \cite{Scott2016} with the sparse polynomial zonotope representation in \cite{Kochdumper2019}. Constrained polynomial zonotopes are closed under all relevant set operations (see Table~\ref{tab:setRep}) and can represent any set in Fig.~\ref{fig:RelationsSetRepresentations}, except star sets, level sets, and sets defined by their support function. As shown in Table~\ref{tab:setRep}, constrained polynomial zonotopes are the only set representation for which closed-form expressions for the calculation of all relevant set operations are known.

\vspace{-10pt}

\subsection{Notation and Assumptions}

In the remainder of this work, we use the following notations: Sets are denoted by calligraphic letters, matrices by uppercase letters, and vectors by lowercase letters. Moreover, the set of natural numbers is denoted by $\mathbb{N} = \{1, 2,\dots \}$, the set of natural numbers including zero is denoted by $\mathbb{N}_0 = \{0, 1, 2, \dots \}$, and the set of real numbers is denoted by $\mathbb{R}$. Given a set $\mathcal{H} = \{h_1,\dots,h_n\}$, $|\mathcal{H}| = n$ denotes the cardinality of the set. Given a vector $b \in \mathbb{R}^n$, $b_{(i)}$ refers to the $i$-th entry. Likewise, given a matrix $A \in \mathbb{R}^{n \times w}$, $A_{(i,\cdot)}$ represents the $i$-th matrix row, $A_{(\cdot,j)}$ the $j$-th column, and $A_{(i,j)}$ the $j$-th entry of matrix row $i$. Given a set of positive integer indices $\mathcal{H} = \{h_1,\dots,h_{|\mathcal{H}|} \}$ with $\forall i \in \{ 1, \dots, |\mathcal{H}| \},~1 \leq h_i \leq w$, notation $A_{(\cdot,\mathcal{H})}$ is used for $[ A_{( \cdot,h_1 )} ~ \dots ~ A_{( \cdot, h_{|\mathcal{H}|} )} ]$, where $[C~D]$ denotes the concatenation of two matrices $C$ and $D$. The symbols $\mathbf{0}$ and $\mathbf{1}$ represent matrices and vectors of zeros and ones of proper dimension, and $\text{diag}(a)$ returns a square matrix with $a \in \mathbb{R}^n$ on the diagonal. The empty matrix is denoted by $[~]$ and the identity matrix of dimension $n \times n$ is denoted by $I_n \in \R^{n \times n}$. Moreover, we use the shorthand $\mathcal{I} = [l,u]$ for an $n$-dimensional interval $\mathcal{I} := \{x \in \Rn~|~l_{(i)} \leq x_{(i)} \leq u_{(i)},~i = 1,\dots,n\}$. For the derivation of computational complexity, we consider all binary operations, except concatenations; initializations are also not considered.


\section{Definitions}
\label{sec:Preliminaries}

Let us first provide some definitions that are important for the remainder of the paper. We begin with zonotopes:
\begin{definition}
	(Zonotope) \cite[Def.~1]{Girard2005} Given a constant offset $c \in \Rn$ and a generator matrix $G \in \R^{n \times p}$, a zonotope $\mathcal{Z} \subset \Rn$ is defined as
	\begin{equation*}
 	\mathcal{Z} := \bigg \{ c + \sum_{k=1}^{p} \alpha_k \, G_{(\cdot,k)} ~ \bigg |  ~ \alpha_k \in [\shortminus 1,1] \bigg \}.
 \end{equation*}
 The scalars $\alpha_k$ are called factors and we use the shorthand $\mathcal{Z} = \langle c,G \rangle_Z$. \label{def:zonotope} \hfill $\square$
\end{definition}
Constrained zonotopes \cite{Scott2016} can represent arbitrary bounded polytopes:

\begin{definition}
 (Constrained Zonotope) \cite[Def. 3]{Scott2016} Given a constant offset $c \in \Rn$, a generator matrix $G \in \R^{n \times p}$, a constraint matrix $A \in \R^{m \times p}$, and a constraint vector $b \in \R^m$, a constrained zonotope $\mathcal{CZ} \subset \Rn$ is defined as
 \begin{equation*}
 	\mathcal{CZ} := \bigg \{ c + \sum_{k=1}^{p} \alpha_k \, G_{(\cdot,k)} ~ \bigg | ~ \sum_{k=1}^p \alpha_k \, A_{(\cdot,k)} = b, ~ \alpha_k \in [\shortminus 1,1] \bigg \}.
 \end{equation*}
 We use the shorthand $\mathcal{CZ} = \langle c,G,A,b \rangle_{CZ}$. 
 \label{def:conZonotope} \hfill $\square$
\end{definition}
Polynomial zonotopes are a non-convex set representation first introduced in \cite{Althoff2013a}. We use the sparse representation of polynomial zonotopes \cite{Kochdumper2019}:

\begin{definition}
  (Polynomial Zonotope) \cite[Def. 1]{Kochdumper2019} Given a constant offset $c \in \R^n$, a generator matrix $G \in \mathbb{R}^{n \times h}$, and an exponent matrix $E \in \mathbb{N}_{0}^{p \times h}$, a polynomial zonotope $\mathcal{PZ} \subset \mathbb{R}^n$ is defined as  
  \begin{equation*}
    \mathcal{PZ} := \bigg \{  c + \sum _{i=1}^h \bigg( \prod _{k=1}^p \alpha _k ^{E_{(k,i)}} \bigg) G_{(\cdot,i)} ~ \bigg| ~\alpha_k \in [\shortminus 1,1]\bigg\}.
  \end{equation*}
  In contrast to \cite[Def.~1]{Kochdumper2019}, we explicitly do not integrate the constant offset $c$ in $G$, and we do not consider independent generators since each polynomial zonotope with independent generators can be equivalently represented as a polynomial zonotope without independent generators \cite[Prop.~1]{Kochdumper2020c}. We use the shorthand $\mathcal{PZ} = \langle c,G,E \rangle_{PZ}$. 
  \label{def:polyZonotope} \hfill $\square$
\end{definition} 
An ellipsoid is defined as follows:
\begin{definition}
	(Ellipsoid) \cite[Eq. 2.3]{Boyd2004}  Given a constant offset $c \in \mathbb{R}^n$ and a symmetric and positive definite matrix $Q \in \mathbb{R}^{n \times n}$, an ellipsoid $\mathcal{E} \subset \mathbb{R}^n$ is defined as
	\begin{equation*}
		\mathcal{E} := \big \{ x ~ \big | ~ (x-c)^T Q^{-1} (x-c) \leq 1 \big \}.
	\end{equation*}
	We use the shorthand $\mathcal{E} = \langle c,Q \rangle_{E}$. 
	\label{def:ellipsoid} \hfill $\square$
\end{definition}
In this paper we consider the standard set operations listed in Table~\ref{tab:setRep}. Given two sets $\mathcal{S}_1, \mathcal{S}_2 \subset \mathbb{R}^n$, a set $\mathcal{S}_3 \subset \R^w$, a matrix $M \in \mathbb{R}^{w \times n}$, and a discrete set of matrices $\mathcal{Q} = \{Q_1,\dots,Q_w\}$ with $Q_i \in \mathbb{R}^{n \times n}$, $i = 1, \dots, w$, these operations are defined as follows: 
\begin{alignat}{2}
	& \text{Linear map:} ~~ && M \otimes \mathcal{S}_1 := \big \{ M s_1 ~\big |~ s_1 \in \mathcal{S}_1 \big \} \label{eq:defLinTrans}\\[7pt]
	& \text{Minkowski sum:} && \mathcal{S}_1 \oplus \mathcal{S}_2 := \big \{ s_1 + s_2 ~\big |~ s_1 \in \mathcal{S}_1,~ s_2 \in \mathcal{S}_2 \big \} 
	\label{eq:defMinSum} \\[7pt]
	& \text{Cartesian prod.:} ~~~ && \mathcal{S}_1 \times \mathcal{S}_3 := \big \{ [s_1^T ~ s_3^T ]^T ~\big |~ s_1 \in \mathcal{S}_1,~ s_3 \in \mathcal{S}_3 \big \} 
	\label{eq:defCartProduct} \\
	& \text{Convex hull\footnotemark:} && conv(\mathcal{S}_1,\mathcal{S}_2) := \bigg \{\sum_{i=1}^{n+1} \lambda_i\,s_i~\bigg|~s_i \in \mathcal{S}_1 \cup \mathcal{S}_2,~\lambda_i \geq 0,~\sum_{i=1}^{n+1} \lambda_i = 1 \bigg\} \label{eq:defConvHull} \\
	& \text{Quadratic map:} && sq(\mathcal{Q},\mathcal{S}_1) := \big \{ x ~\big|~ x_{(i)} = s_1^T Q_i s_1, ~s_1 \in \mathcal{S}_1,~ i = 1, \dots, w \big \} \label{eq:defQuadMap} \\[7pt]
	& \text{Intersection:} && \mathcal{S}_1 \cap \mathcal{S}_2 := \big \{ s ~\big|~ s \in \mathcal{S}_1,~ s \in \mathcal{S}_2 \big \} \label{eq:defIntersection} \\[7pt]
	& \text{Union:} && \mathcal{S}_1 \cup \mathcal{S}_2 := \big \{ s ~\big|~ s \in \mathcal{S}_1 \vee s \in \mathcal{S}_2 \big \} \label{eq:defUnion} 	
\end{alignat}
\footnotetext{Definition according to Caratheodory's theorem \cite{Barany1982}. This rather complex definition is required since constrained polynomial zonotopes can represent disjoint sets.}

\vspace{-10pt}

\noindent Moreover, we consider another set operation that we refer to as the linear combination of two sets:
\begin{equation}
	comb(\mathcal{S}_1,\mathcal{S}_2) := \bigg \{ \frac{1}{2} (1+\lambda) s_1 + \frac{1}{2}(1-\lambda) s_2 ~\bigg |~ s_1 \in \mathcal{S}_1,~s_2 \in \mathcal{S}_2,~\lambda \in [\shortminus 1,1] \bigg\}. \label{eq:defLinComb}
\end{equation}
For convex sets, the convex hull and the linear combination are identical. However, for non-convex sets as considered in this paper, the two operations differ\footnote{The convex hull that we consider in our previous work on sparse polynomial zonotopes in \cite{Kochdumper2019} actually defines a linear combination since polynomial zonotopes are non-convex.}. We consider both operations since for many algorithms, such as reachability analysis \cite[Eq. (3.4)]{Althoff2010a}, it is sufficient to compute the linear combination instead of the convex hull.


\section{Constrained Polynomial Zonotopes}

In this section, we introduce constrained polynomial zonotopes (CPZs). A CPZ is constructed by adding polynomial equality constraints to a polynomial zonotope:
\begin{definition}
	(Constrained Polynomial Zonotope) Given a constant offset $c \in \Rn$, a generator matrix $G \in \R^{n \times h}$, an exponent matrix $E \in \mathbb{N}_{0}^{p \times h}$, a constraint generator matrix $A \in \R^{m \times q}$, a constraint vector $b \in \R^m$, and a constraint exponent matrix $\CE \in \mathbb{N}_{0}^{p \times q}$, a constrained polynomial zonotope is defined as
	\begin{equation*}
		\mathcal{CPZ} := \bigg \{ c + \sum_{i=1}^{h} \bigg( \prod_{k=1}^p \alpha_k^{E_{(k,i)}} \bigg) G_{(\cdot,i)} ~ \bigg | ~ \sum_{i=1}^{q} \bigg( \prod_{k=1}^p \alpha_k^{\CE_{(k,i)}} \bigg) A_{(\cdot,i)} = b, ~ \alpha_k \in [\shortminus 1,1]   \bigg \}.
	\end{equation*}
	The constrained polynomial zonotope is regular if the exponent matrix $E$ and the constrained exponent matrix $R$ do not contain duplicate columns or all-zero columns:
	\begin{equation*}
		\forall i,j \in \{1,\dots,h\}, ~ (i \neq j) \Rightarrow \big(E_{(\cdot,i)} \neq E_{(\cdot,j)}\big) ~~ \mathrm{and} ~~ \forall i \in \{1,\dots,h\}, ~ E_{(\cdot,i)} \neq \mathbf{0},
	\end{equation*}
	and
	\begin{equation*}
		\forall i,j \in \{1,\dots,q \},~ (i \neq j) \Rightarrow \big(\CE_{(\cdot,i)} \neq \CE_{(\cdot,j)}\big) ~~ \mathrm{and} ~~ \forall i \in \{1,\dots,q \}, ~ \CE_{(\cdot,i)} \neq \mathbf{0}.
	\end{equation*}
	The scalars $\alpha_k$ are called factors, where the number of factors is $p$, the number of generators $G_{(\cdot,i)}$ is $h$, the number of constraints is $m$, and the number of constraint generators $A_{(\cdot,i)}$ is $q$. The order $\rho = \frac{h+q}{n}$ estimates the complexity of a constrained polynomial zonotope. We use the shorthand $\mathcal{CPZ} = \langle c,G,E,A,b,\CE \rangle_{CPZ}$.
	\label{def:CPZ} \hfill $\square$
\end{definition}
All components of a set $\square_i$ have index $i$, e.g., the parameter $p_i$, $h_i$, $m_i$, and $q_i$ as defined in Def.~\ref{def:CPZ} belong to $\mathcal{CPZ}_i$. The quantity of scalar numbers $\mu$ required to store a CPZ is 
\begin{equation}
	\mu = (n+p)h + n + (m+p)q + m
	\label{eq:repSize}
\end{equation}
since $c$ has $n$ entries, $G$ has $nh$ entries, $E$ has $ph$ entries, $A$ has $mq$ entries, $b$ has $m$ entries, and $\CE$ has $pq$ entries. We call $\mu$ the representation size of the CPZ. Moreover, we call the polynomial zonotope $\mathcal{PZ} = \langle c,G,E \rangle_{PZ}$ corresponding to $\mathcal{CPZ} = \langle c,G,E,A,b,\CE \rangle_{CPZ}$ the constructing polynomial zonotope. For the derivation of the computational complexity of set operations with respect to the dimension $n$, we make the assumption that 
\begin{equation}
	p = a_p n, ~ h = a_h n, ~ q = a_q n, ~ m = a_m n, 
	\label{eq:complexity}
\end{equation}
with $a_p,a_h,a_q,a_m \in \R_{\geq 0}$. This assumption is justified by the fact that one usually reduces the representation size to a desired upper bound when computing with CPZs. 

\newpage \noindent We demonstrate the concept of CPZs by an example:

\begin{example}
	The CPZ 
	\begin{equation*}
		\mathcal{CPZ} = \bigg \langle \begin{bmatrix} 0 \\ 0 \end{bmatrix}, \begin{bmatrix} 1 & 0 & 1 & \shortminus 1 \\ 0 & 1 & 1 & 1 \end{bmatrix}, \begin{bmatrix} 1 & 0 & 1 & 2 \\ 0 & 1 & 1 & 0 \\ 0 & 0 & 1 & 1 \end{bmatrix}, \begin{bmatrix} 1 & \shortminus0.5 & 0.5 \end{bmatrix}, 0.5, \begin{bmatrix} 0 & 1 & 2 \\ 1 & 0 & 0 \\ 0 & 1 & ~0 \end{bmatrix} \bigg \rangle_{CPZ}
	\end{equation*}
	defines the set
	\begin{equation*}
		\begin{split}
		\mathcal{CPZ} = \bigg \{ & \begin{bmatrix} 0 \\ 0 \end{bmatrix} + \begin{bmatrix} 1 \\ 0 \end{bmatrix} \alpha_1 + \begin{bmatrix} 0 \\ 1 \end{bmatrix} \alpha_2 + \begin{bmatrix} 1 \\ 1 \end{bmatrix} \alpha_1 \alpha_2 \alpha_3 + \begin{bmatrix} \shortminus 1 \\ 1 \end{bmatrix} \alpha_1^2 \alpha_3 ~ \bigg |  \\
		& ~~~~ \alpha_2 - 0.5 \, \alpha_1 \alpha_3 + 0.5 \, \alpha_1^2 = 0.5, ~ \alpha_1,\alpha_2,\alpha_3 \in [\shortminus 1,1] \bigg \},
		\end{split}
	\end{equation*}
	which is visualized in Fig. \ref{fig:Example}.
	\label{ex:CPZ}
\end{example}

\begin{figure}
  \centering
  \psfragfig[width=0.95\columnwidth]{./Figures/conPolyZonoExample}{
  \psfrag{a}[c][c]{$\alpha_1$}
  \psfrag{b}[c][c]{$\alpha_2$}
  \psfrag{c}[c][c]{$x_1$}
  \psfrag{d}[c][c]{\rotatebox[origin=c]{180}{$x_2$}}
  \psfrag{e}[c][c]{\rotatebox[origin=c]{180}{$\alpha_3$}}
  }
  \caption{Visualization of the polynomial constraint (left), the constrained polynomial zonotope (right, red), and the corresponding constructing polynomial zonotope (right, blue) for $\mathcal{CPZ}$ from Example \ref{ex:CPZ}.}
  \label{fig:Example}
\end{figure}


\section{Preliminaries}
\label{sec:preliminaryOperations}

We begin with some preliminary results that are required throughout this paper. 

\subsection{Identities}

Let us first establish some identities that are useful for subsequent derivations. According to the definition of CPZs in Def.~\ref{def:CPZ}, it holds that 
\begin{equation}
	\begin{split}
	& \bigg \{c + \sum_{i=1}^{h_1} \bigg( \prod_{k=1}^p \alpha_k^{E_{1(k,i)}} \bigg) G_{1(\cdot,i)} + \sum_{i=1}^{h_2} \bigg( \prod_{k=1}^p \alpha_k^{E_{2(k,i)}} \bigg) G_{2(\cdot,i)} ~ \bigg | \\
	& ~~~ \sum_{i=1}^{q} \bigg( \prod_{k=1}^p \alpha_k^{\CE_{(k,i)}} \bigg) A_{(\cdot,i)} = b, ~ \alpha_k \in [\shortminus 1,1]   \bigg \} = \big \langle c,[G_1~G_2],[E_1~E_2],A,b,\CE \big \rangle_{CPZ}
	\end{split}
	\label{eq:sumIdentity}
\end{equation}
and
\begin{align}
	& \bigg \{ c + \sum_{i=1}^{h} \bigg( \prod_{k=1}^p \alpha_k^{E_{(k,i)}} \bigg) G_{(\cdot,i)} ~ \bigg | ~ \sum_{i=1}^{q_1} \bigg( \prod_{k=1}^{p} \alpha_k^{\CE_{1(k,i)}} \bigg) A_{1(\cdot,i)} = b_1, \nonumber \\
	& ~~~~~~~~~~~~~~~~~~~~~~~~~~~~~~~~~~~~~~~~ \sum_{i=1}^{q_2} \bigg( \prod_{k=1}^{p} \alpha_k^{\CE_{2(k,i)}} \bigg) A_{2(\cdot,i)} = b_2, ~ \alpha_k \in [\shortminus 1,1]   \bigg \} \label{eq:conIdentity} \\
	& ~~ \nonumber \\
	& = \bigg \langle c,G,E,\begin{bmatrix}A_1 & \mathbf{0} \\ \mathbf{0} & A_2 \end{bmatrix},\begin{bmatrix} b_1 \\ b_2 \end{bmatrix},\begin{bmatrix} \CE_1 & \CE_2 \end{bmatrix} \bigg \rangle_{CPZ}. \nonumber
\end{align}

\subsection{Transformation to a Regular Representation}

Some set operations result in a CPZ that is not regular. We therefore introduce operations that transform a non-regular CPZ into a regular one. The \operator{compactGen} operation returns a CPZ with a regular exponent matrix:

\begin{proposition}
	(Compact Generators) Given $\mathcal{CPZ} = \langle c,G,E,A,b,\CE \rangle_{CPZ} \subset \Rn$, the operation \operator{compactGen} returns a representation of $\mathcal{CPZ}$ with a regular exponent matrix and has complexity $\mathcal{O}(p h \log (h) + nh)$:
	\begin{equation*}
		\operator{compactGen}(\mathcal{CPZ}) = \bigg \langle \underbrace{c + \sum_{i \in \mathcal{K}} G_{(\cdot,i)}}_{\overline{c}}, \underbrace{\bigg[ \sum_{i \in \mathcal{H}_1} G_{(\cdot,i)} ~ \dots ~ \sum_{i \in \mathcal{H}_w} G_{(\cdot,i)} \bigg]}_{\overline{G}}, \overline{E}, A,b,\CE \bigg \rangle_{CPZ}
	\end{equation*}
	with
	\setlength{\jot}{12pt}
	\begin{gather*}
		\mathcal{K} = \big\{i~\big|~\forall k \in \{1,\dots,p\},~ E_{(k,i)} = 0 \big \}, ~~ \overline{E} = \operator{uniqueColumns}\big( E_{(\cdot,\mathcal{N})} \big) \in \mathbb{N}_{0}^{p \times w}, \\
		\mathcal{N} = \{1,\dots,h\} \setminus \mathcal{K},~~ \mathcal{H}_j = \big \{ i~ \big|~ \forall k \in \{1, \dots, p\}, ~ \overline{E}_{(k,j)} = E_{(k,i)}  \big \},~~ j = 1,\dots,w,
	\end{gather*}
	where the operation \operator{uniqueColumns} removes identical matrix columns until all columns are unique.
	\label{prop:compactGen}
\end{proposition}

\begin{proof}
	For a CPZ where the exponent matrix $E = [e ~ e]$ consists of two identical columns $e \in \mathbb{N}_{0}^{p}$, it holds that 
	\begin{equation*}
	\begin{split}
		\bigg \{ \bigg( \prod _{k=1}^p \alpha _k ^{e_{(k)}} \bigg) G_{(\cdot,1)} + \bigg( \prod _{k=1}^p \alpha _k ^{e_{(k)}} \bigg) & G_{(\cdot,2)} ~\bigg| ~ \alpha_k \in [\shortminus 1,1] \bigg \} = \\
		& \bigg \{  \bigg( \prod _{k=1}^p \alpha _k ^{e_{(k)}} \bigg) \bigg ( G_{(\cdot,1)} + G_{(\cdot,2)} \bigg) ~\bigg| ~ \alpha_k \in [\shortminus 1,1] \bigg \}.
	\end{split}
	\end{equation*}
	Summation of the generators for terms $\alpha_1^{e_{(1)}} \cdot \ldots \cdot \alpha_p^{e_{(p)}}$ with identical exponents therefore does not change the set, which proves that $\operator{compactGen}(\mathcal{CPZ}) = \mathcal{CPZ}$. In addition, since the operation \operator{uniqueColomns} removes all identical matrix columns and we add all-zero columns to the constant offset, it holds that the resulting exponent matrix $\overline{E}$ is regular according to Def.~\ref{def:CPZ}.

\Comp We assume that the operation \operator{uniqueColumns} in combination with the construction of the sets $\mathcal{H}_j$ is implemented by first sorting the matrix columns, followed by an identification of identical neighbors. Moreover, we assume that in order to sort the matrix columns one first sorts the entries in the first row. For all columns with identical entries in the first row one then sorts the columns according to the entries in the second row. Since this process is continued for all $p$ matrix rows and the complexity for sorting one row of the matrix $E_{(\cdot,\mathcal{N})} \in \R^{p \times |\mathcal{N}|}$ is $\mathcal{O}(|\mathcal{N}| \log(|\mathcal{N}|))$ \cite[Chapter~5]{Knuth1997}, sorting the matrix columns has a worst-case complexity of $\mathcal{O}(p |\mathcal{N}| \log (|\mathcal{N}|))$, which is $\Oc(ph \log(h))$ since $|\mathcal{N}| \leq h$. The identification and removal of identical neighbors requires at most $p(h-1)$ comparison operations and therefore has worst-case complexity $\mathcal{O}(p(h-1))$. Moreover, construction of the sets $\mathcal{K}$ and $\mathcal{N}$ has complexity $\Oc(ph)$ in the worst case. Finally, the construction of the constant offset $\overline{c}$ and the generator matrix $\overline{G}$ has complexity $\mathcal{O}(nh)$ in the worst case. The overall complexity is therefore $\mathcal{O}(p h \log (h)) + \mathcal{O}(p(h-1)) + \Oc(ph) + \mathcal{O}(nh) = \mathcal{O}(p h \log (h) + nh)$. \hfill $\square$
\end{proof}
The \operator{compactCon} operation returns a CPZ with a regular constraint exponent matrix:	

\begin{proposition}
	(Compact Constraints) Given $\mathcal{CPZ} = \langle c,G,E,A,b,\CE \rangle_{CPZ} \subset \Rn$, the operation \operator{compactCon} returns a representation of $\mathcal{CPZ}$ with a regular constraint exponent matrix and has complexity $\mathcal{O}(p q \log (q) + mq)$:
	\begin{equation*}
		\operator{compactCon}(\mathcal{CPZ}) = \bigg\langle c, G, E, \bigg[ \sum_{i \in \mathcal{H}_1} A_{(\cdot,i)} ~ \dots ~ \sum_{i \in \mathcal{H}_w} A_{(\cdot,i)} \bigg], b - \sum_{i\in \mathcal{K}} A_{(\cdot,i)}, \overline{\CE} \bigg \rangle_{CPZ}
	\end{equation*}
	with
	\setlength{\jot}{12pt}
	\begin{gather*}
		\mathcal{K} = \big \{ i~ \big | ~ \forall k \in \{1,\dots,p\},~ \CE_{(k,i)} = 0 \big \},~~ \overline{\CE} = \operator{uniqueColumns}( \CE_{(\cdot,\mathcal{N})} ) \in \mathbb{N}_{0}^{p \times w},\\
		\mathcal{N} = \{1,\dots,q\} \setminus \mathcal{K}, ~~ \mathcal{H}_j = \big \{ i~ \big|~ \forall k \in \{1, \dots, p\}, ~ \overline{\CE}_{(k,j)} = \CE_{(k,i)} \big \},~~ j = 1,\dots,w,
	\end{gather*}
	where the operation \operator{uniqueColumns} removes identical matrix columns until all columns are unique.
	\label{prop:compactCon}
\end{proposition}

\begin{proof}
	The proof is analogous to the proof for Prop. \ref{prop:compactGen}. \hfill $\square$
\end{proof}

\subsection{Lifted Polynomial Zonotopes}

Finally, we introduce the lifted polynomial zonotope corresponding to a CPZ in the following lemma, which is inspired by \cite[Prop.~3]{Scott2016}:
\begin{lemma}
	(Lifted Polynomial Zonotope) Given $\mathcal{CPZ} = \langle c,G,E,A,b,\CE \rangle_{CPZ} \subset \R^n$, the corresponding lifted polynomial zonotope $\mathcal{PZ}^+ \subset \R^{n+m}$ defined as
	\begin{equation}
		\mathcal{PZ}^+ = \bigg\langle \begin{bmatrix} c \\ \shortminus b \end{bmatrix}, \underbrace{\begin{bmatrix} G & \mathbf{0} \\ \mathbf{0} & A \end{bmatrix}}_{\overline{G}},\underbrace{\begin{bmatrix} E & \CE \end{bmatrix}}_{\overline{E}} \bigg\rangle_{PZ}
		\label{eq:liftedPolyZono}
	\end{equation}
	satisfies
	\begin{equation*}
		\forall x \in \Rn,~~ \big( x \in \mathcal{CPZ} \big) \Leftrightarrow \bigg( \begin{bmatrix} x \\ \mathbf{0} \end{bmatrix} \in \mathcal{PZ}^+ \bigg). 
	\end{equation*}	
	\label{lemma:liftCPZ}
\end{lemma}
\begin{proof}
With the definition of CPZs in Def.~\ref{def:CPZ} we obtain
	\begin{equation*}
	\setlength{\jot}{12pt} 
	\begin{split}
		& \big( x \in \mathcal{CPZ} \big) \overset{\substack{\text{Def.}~\ref{def:CPZ}\\ \vspace{-2pt}}}{\Leftrightarrow}\\
		& \bigg( \exists \alpha \in [\shortminus \mathbf{1},\mathbf{1}],~ \bigg( x = c + \sum_{i=1}^{h} \bigg( \prod_{k=1}^p \alpha_k^{E_{(k,i)}} \bigg) G_{(\cdot,i)} \bigg) \wedge \bigg( \sum_{i=1}^{q} \bigg( \prod_{k=1}^p \alpha_k^{\CE_{(k,i)}} \bigg) A_{(\cdot,i)} = b \bigg) \bigg) \\
		& \overset{\substack{\eqref{eq:liftedPolyZono}\\ \vspace{-2pt}}}{\Leftrightarrow} \bigg( \exists \alpha \in [\shortminus \mathbf{1},\mathbf{1}],~ \bigg( \begin{bmatrix} x \\ \mathbf{0} \end{bmatrix} = \begin{bmatrix}c \\ \shortminus b\end{bmatrix} + \sum_{i=1}^{h+q} \bigg( \prod_{k=1}^p \alpha_k^{\overline{E}_{(k,i)}} \bigg) \overline{G}_{(\cdot,i)} \bigg) \overset{\substack{\eqref{eq:liftedPolyZono} \\ \vspace{-2pt}}}{\Leftrightarrow} \bigg( \begin{bmatrix} x \\ \mathbf{0} \end{bmatrix} \in \mathcal{PZ}^+ \bigg),
	\end{split}
	\end{equation*}
	where $\alpha = [\alpha_1 ~\dots ~\alpha_p]^T$. \hfill $\square$
\end{proof}
According to Lemma~\ref{lemma:liftCPZ}, a CPZ can be interpreted as the intersection of the lifted polynomial zonotope $\mathcal{PZ}^+$ with the subspace $\{x \in \R^{n+m}~|~ x_{(n+1)},\dots,x_{(n+m)} = 0 \}$. Moreover, with the lifted polynomial zonotope we can transfer results for polynomial zonotopes to CPZs, as we demonstrate later. Potential redundancies in the lifted polynomial zonotope due to common columns in the exponent and the constraint exponent matrix can be removed using the \operator{compact} operation for polynomial zonotopes in \cite[Prop.~2]{Kochdumper2019}.

\subsection{Rescaling}
\label{sec:rescaling}

Later, in Sec.~\ref{sec:enclosure} and Sec.~\ref{sec:complexityReduction}, we describe how to enclose CPZs by other set representations and how to reduce the representation size of a CPZ by enclosing it with a simpler CPZ. The tightness of these enclosures mainly depends on the size of the corresponding constructing polynomial zonotope. Since the constraints often intersect only part of the factor hypercube $\alpha_1,\dots,\alpha_p \in [\shortminus 1,1]$, we can reduce the size of the constructing polynomial zonotope in advance to obtain tighter results. This can be achieved with a contractor:
\begin{definition}
	(Contractor) \cite[Chapter~4.1]{Jaulin2006} Given an interval $\mathcal{I} \subset \R^p$ and a vector field $f:~\R^p \to \R^m$ which defines the constraint $f(x) = \mathbf{0}$, the operation $\operator{contract}$ returns an interval that satisfies
	\begin{equation*}
		\operator{contract}\big(f(x),\mathcal{I}\big) \subseteq \mathcal{I} 
	\end{equation*}
	and
	\begin{equation*}
		\forall x \in \mathcal{I}, ~~ \big( f(x) = \mathbf{0}\big) \Rightarrow \big( x \in \operator{contract}\big(f(x),\mathcal{I}\big)\big),
	\end{equation*}
	so that it is guaranteed that all solutions for $f(x) = \mathbf{0}$ contained in $\mathcal{I}$ are also contained in the contracted interval.
	\label{def:contractor}
\end{definition}
There exist many sophisticated approaches for implementing a contractor, an overview of which is provided in \cite[Chapter~4]{Jaulin2006}. Given $\mathcal{CPZ} = \langle c,G,E,A,b,\CE \rangle_{CPZ} \subset \Rn$, we can compute a tighter domain $\alpha_1,\dots,\alpha_p \in [l,u] \subseteq [\shortminus \mathbf{1},\mathbf{1}]$ for the factors by applying a contractor to the polynomial constraint of the CPZ:
\begin{equation*}
	[l,u] = \operator{contract}(f(x),[\shortminus \mathbf{1},\mathbf{1}]), ~~ f(x) = \sum_{i=1}^{q} \bigg( \prod_{k=1}^p x_{(k)}^{\CE_{(k,i)}} \bigg) A_{(\cdot,i)} - b.
\end{equation*}
Using the contracted domain $[l,u]$, the $\mathcal{CPZ}$ can be equivalently represented as
\begin{equation*}
	\mathcal{CPZ} = \bigg \{ c + \sum_{i=1}^{h} \bigg( \prod_{k=1}^p \alpha_k^{E_{(k,i)}} \bigg) G_{(\cdot,i)} \, \bigg | \, \sum_{i=1}^{q} \bigg( \prod_{k=1}^p \alpha_k^{\CE_{(k,i)}} \bigg) A_{(\cdot,i)} = b, \, [\alpha_1~\dots~\alpha_p]^T \in [l,u] \bigg \}.
\end{equation*}
We show in Appendix~\ref{app:rescaling} that this set can be represented as a CPZ. Let us demonstrate rescaling by an example:

\begin{example}
	We consider the CPZ 
	\begin{equation*}
	\begin{split}
		\mathcal{CPZ} = \bigg \langle & \begin{bmatrix} 0 \\ 0 \end{bmatrix}, \begin{bmatrix} 2 & 0 & 0 & 0.4 \\ 0 & \shortminus 2 & 1 & 0.2 \end{bmatrix}, \begin{bmatrix} 1 & 1 & 2 & 0 \\ 0 & 1 & 1 & 0 \\ 0 & 0 & 0 & 1 \end{bmatrix},  \begin{bmatrix} 1 & 2 & 1 & 2 & 1 \end{bmatrix}, \shortminus 2, \begin{bmatrix} 2 & 1 & 0 & 0 & 0 \\ 0 & 0 & 2 & 1 & 0 \\ 0 & 0 & 0 & 0 & 1 \end{bmatrix} \bigg \rangle_{CPZ},
	\end{split}
	\end{equation*}
	which is visualized in Fig.~\ref{fig:rescaleCPZ}. As depicted on the left side of Fig.~\ref{fig:rescaleCPZ}, the constraint only intersects a small part of the factor domain $\alpha_1,\alpha_2,\alpha_3 \in [\shortminus 1,1]$, so that the domain can be contracted to $\alpha_1,\alpha_2,\alpha_3 \in [\shortminus 1,0]$. Rescaling therefore significantly reduces the size of the constructing polynomial zonotope, as visualized on the right side of Fig.~\ref{fig:rescaleCPZ}.
	\label{ex:rescale}
\end{example}

\begin{figure}
  \centering
  \psfragfig[width=0.95\columnwidth]{./Figures/rescale}{
  \psfrag{a}[c][c]{$\alpha_1$}
  \psfrag{b}[c][c]{$\alpha_2$}
  \psfrag{c}[c][c]{$x_1$}
  \psfrag{d}[c][c]{\rotatebox[origin=c]{180}{$x_2$}}
  \psfrag{e}[c][c]{\rotatebox[origin=c]{180}{$\alpha_3$}}
  }
  \caption{Visualization of rescaling for $\mathcal{CPZ}$ from Example~\ref{ex:rescale} (red,~right), where the corresponding constraint is visualized on the left. The constructing polynomial zonotope before rescaling is shown in blue, and the constructing polynomial zonotope after rescaling is shown in green.}
  \label{fig:rescaleCPZ}
\end{figure}


\section{Conversion from Other Set Representations}

This section shows how other set representations can be converted to CPZs.

\subsection{Taylor Models, Intervals, and Zonotopic Set Representations}
\label{subsec:polynomialZonotope}

Since a polynomial zonotope is simply a CPZ without constraints, the conversion is trivial in this case. For polynomial zonotopes that are defined with additional independent generators as in \cite[Def.~1]{Kochdumper2019}, one can first convert the polynomial zonotope to a polynomial zonotope without independent generators using \cite[Prop.~1]{Kochdumper2020c}. According to \cite[Prop. 4]{Kochdumper2019}, the set defined by a Taylor model can be equivalently represented as a polynomial zonotope. Moreover, according to \cite[Prop.~3]{Kochdumper2019} any zonotope can be represented as a polynomial zonotope, and any interval can be represented as a zonotope \cite[Prop.~2.1]{Althoff2010a}. Finally, a constrained zonotope is a special case of a CPZ where all polynomial functions are linear, so the conversion is straightforward. In summary, we therefore obtain the following conversion rules:
\begin{alignat}{2}
    & \text{Interval:} && \mathcal{I} = [l,u] = \langle 0.5(u+l),0.5 \, \text{diag}(u-l),I_n,[~],[~],[~] \rangle_{CPZ} \label{eq:convInterval}\\[7pt]
    & \text{Zonotope:} && \mathcal{Z} = \langle c,G \rangle_Z = \langle c,G,I_p,[~],[~],[~] \rangle_{CPZ} \label{eq:convZono}\\[7pt]
    & \text{Constrained zonotope:} ~~~ && \mathcal{CZ} = \langle c,G,A,b \rangle_{CZ} = \langle c, G, I_p, A, b, I_p \big \rangle_{CPZ} \label{eq:convConZono}\\[7pt]
	& \text{Polynomial zonotope:} ~~ && \mathcal{PZ} = \langle c,G,E \rangle_{PZ} = \langle c,G,E,[~],[~],[~] \rangle_{CPZ} \label{eq:convPolyZono}
\end{alignat}
The conversion of an interval has complexity $\mathcal{O}(n)$ with respect to the dimension $n$ due to the summation and subtraction of the vectors $l$ and $u$, while all other conversions have constant complexity $\mathcal{O}(1)$ since no computations are required.

\subsection{Polytopes}
\label{subsec:polytope}

There are two possibilities to represent a bounded polytope as a CPZ. According to \cite{Kochdumper2021} and \cite[Theorem 1]{Kochdumper2019}, every bounded polytope can be represented as a polynomial zonotope. Therefore, any bounded polytope can be converted to a CPZ by first representing it as a polynomial zonotope followed by a conversion of the polynomial zonotope to a CPZ using \eqref{eq:convPolyZono}. Moreover, it holds according to \cite[Theorem 1]{Scott2016} that any bounded polytope can be represented as a constrained zonotope. Consequently, the second possibility for the conversion of a bounded polytope to a CPZ is to first represent the polytope as a constrained zonotope, and then convert the constrained zonotope to a CPZ using \eqref{eq:convConZono}. Which of the two methods results in a more compact representation depends on the polytope.

\subsection{Ellipsoids}

Any ellipsoid can be converted to a CPZ:

\begin{proposition}
	(Conversion Ellipsoid) An ellipsoid $\mathcal{E} = \langle c,Q \rangle_E \subset \Rn$ can be equivalently represented by a CPZ:
	\begin{equation}
		\mathcal{E} =  \bigg \langle c, \underbrace{V \begin{bmatrix} \sqrt{\lambda_1} & & 0 \\ & \ddots & \\ 0 & & \sqrt{\lambda_n} \end{bmatrix}}_{G}, \underbrace{\begin{bmatrix} I_n \\ \mathbf{0} \end{bmatrix}}_{E}, \underbrace{\begin{bmatrix} \shortminus 0.5 & \mathbf{1} \end{bmatrix} }_{A},\underbrace{0.5}_{b}, \underbrace{\begin{bmatrix} \mathbf{0} & 2 I_n \\ 1 & \mathbf{0} \end{bmatrix}}_{\CE} \bigg \rangle_{CPZ},
		\label{eq:ellipsoid}
	\end{equation}
	where the eigenvalues $\lambda_1,\dots,\lambda_n$, the matrix of eigenvalues $D$, and the matrix of eigenvectors $V$ are obtained by the eigenvalue decomposition
	\begin{equation}
		Q = V \underbrace{\begin{bmatrix} \lambda_1 & & 0 \\ & \ddots & \\ 0 & & \lambda_n \end{bmatrix}}_{D} V^T.
		\label{eq:eigenvalue}
	\end{equation}
	The complexity of the conversion is $\mathcal{O}(n^3)$.
\end{proposition}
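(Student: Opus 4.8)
The plan is to show that the CPZ in \eqref{eq:ellipsoid} is exactly the affine image of the Euclidean unit ball that describes $\mathcal{E}$. First I would exploit the eigenvalue decomposition \eqref{eq:eigenvalue}: since $Q$ is symmetric positive definite, $V$ is orthogonal and all $\lambda_i > 0$, so $Q^{-1} = V D^{-1} V^T$. Writing $x = c + V S z$ with $S := \operator{diag}(\sqrt{\lambda_1},\dots,\sqrt{\lambda_n})$ — equivalently $z = S^{-1} V^T (x-c)$ — the defining inequality \eqref{eq:defEllipsoid} becomes $(x-c)^T Q^{-1}(x-c) = z^T S D^{-1} S z = z^T z \le 1$. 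Hence
\begin{equation*}
	\mathcal{E} = \big \{ c + V S z ~\big|~ \|z\|_2 \le 1 \big \}.
\end{equation*}

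Next I would read off the point part of the CPZ. The exponent matrix $E = [\,I_n;~\mathbf{0}\,]$ has $h = n$ columns and $p = n+1$ rows; its $i$-th column is the $i$-th unit vector, so the last factor $\alpha_{n+1}$ never appears in the generator sum. Therefore the generator sum in \eqref{eq:CPZ} collapses to $c + \sum_{i=1}^n \alpha_i\, G_{(\cdot,i)} = c + V S \alpha$ with $\alpha := (\alpha_1,\dots,\alpha_n)^T$. Then I would simplify the constraint: the constraint exponent matrix $\CE$ has $q = n+1$ columns, the first contributing the monomial $\alpha_{n+1}$ and the $(j{+}1)$-th the monomial $\alpha_j^2$, so with $A = [\,-0.5~~\mathbf{1}\,]$ and $b = 0.5$ the equality in \eqref{eq:CPZ} reads
\begin{equation*}
	-0.5\,\alpha_{n+1} + \sum_{j=1}^n \alpha_j^2 = 0.5 \qquad \Longleftrightarrow \qquad \sum_{j=1}^n \alpha_j^2 = \tfrac12\,(1+\alpha_{n+1}).
\end{equation*}

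The key step is then to show that, as $(\alpha_1,\dots,\alpha_{n+1})$ ranges over $[-1,1]^{n+1}$ subject to this equation and we project onto $\alpha = (\alpha_1,\dots,\alpha_n)$, the resulting set is precisely the unit ball. For the inclusion "$\supseteq$": if $\|\alpha\|_2 \le 1$ then $\tfrac12(1+\alpha_{n+1}) = \|\alpha\|_2^2 \in [0,1]$ is solved by $\alpha_{n+1} = 2\|\alpha\|_2^2 - 1 \in [-1,1]$, and each $|\alpha_i| \le 1$ holds automatically. For "$\subseteq$": any admissible tuple forces $\|\alpha\|_2^2 = \tfrac12(1+\alpha_{n+1}) \le 1$. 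Substituting the admissible $\alpha$ into $c + V S \alpha$ then yields exactly $\{\,c + V S z \mid \|z\|_2 \le 1\,\} = \mathcal{E}$, which proves the identity. For the complexity claim I would note that assembling the six CPZ components requires only the eigenvalue decomposition \eqref{eq:eigenvalue}, costing $\mathcal{O}(n^3)$, plus $n$ square roots and the column scaling $V S$ at $\mathcal{O}(n^2)$; the decomposition dominates, giving $\mathcal{O}(n^3)$.

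I expect the main obstacle to be making this projection argument watertight rather than any single computation: one must argue in both directions that the auxiliary "slack" factor $\alpha_{n+1}$, together with its box constraint $[-1,1]$, turns the equality $\sum_j \alpha_j^2 = \tfrac12(1+\alpha_{n+1})$ into the inequality $\sum_j \alpha_j^2 \le 1$ — without losing boundary points of the ball and without admitting spurious ones — and in particular that the per-coordinate bounds $|\alpha_j| \le 1$ are implied by $\|\alpha\|_2 \le 1$ and hence impose no extra restriction on the set.
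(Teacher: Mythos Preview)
Your proposal is correct and follows essentially the same route as the paper: both use the eigendecomposition to write $\mathcal{E}$ as the affine image $c + V\operatorname{diag}(\sqrt{\lambda_i})\,\{\|\alpha\|_2\le 1\}$, then identify the CPZ generator sum with $c+VS\alpha$ and argue that the slack factor $\alpha_{n+1}\in[-1,1]$ turns the equality constraint into $\sum_j\alpha_j^2\le 1$, with the same $\mathcal{O}(n^3)$ complexity bookkeeping. If anything, your two-inclusion argument for the projection onto the unit ball (and the remark that $|\alpha_j|\le 1$ is automatic from $\|\alpha\|_2\le 1$) is more explicit than the paper's one-line equivalence.
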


\begin{proof}
	The matrices $A,\CE$ and the vector $b$ in \eqref{eq:ellipsoid} define the constraint
	\begin{equation}
		\shortminus 0.5 \, \alpha_{n+1} + \alpha_1^2 + \dotsc + \alpha_n^2 = 0.5.
		\label{eq:proofEllipse1}
	\end{equation}
	Since $\alpha_{n+1} \in [\shortminus 1,1]$, \eqref{eq:proofEllipse1} is equivalent to the constraint
	\begin{equation}
		0 \leq \alpha_1^2 + \dotsc + \alpha_n^2 \leq 1.
		\label{eq:proofEllipse3}
	\end{equation} 
	Using the eigenvalue decomposition of the matrix $Q$ from \eqref{eq:eigenvalue} it holds that
	\begin{equation}
		Q^{-1} \overset{\eqref{eq:eigenvalue}}{=} (VDV^T)^{-1} = V D^{-1} V^T
		\label{eq:eigInverse}
	\end{equation}
	since $V$ is an orthonormal matrix satisfying $V^{-1} = V^T$. Inserting \eqref{eq:eigInverse} into the definition of an ellipsoid in Def.~\ref{def:ellipsoid} yields
	\begin{equation}
	\setlength{\jot}{12pt}
		\begin{split}
			\mathcal{E} \overset{\substack{\text{Def.}~\ref{def:ellipsoid}\\ \vspace{-3pt}}}{=}& \big \{ x ~ \big | ~ (x-c)^T Q^{-1} (x-c) \leq 1 \big \} = \big \{ c + x ~ \big | ~ x^T Q^{-1} x \leq 1 \big \} \overset{\substack{\eqref{eq:eigInverse} \\ \vspace{-3pt}}}{=} \\
			&\big \{ c + x ~ \big | ~ (V^T x)^T D^{-1} (V^T x) \leq 1 \big \} \overset{\substack{z:=V^Tx \\ \vspace{-3pt}}}{=} \\
			 &\big \{ c + Vz ~ \big | ~ z^T D^{-1} z \leq 1 \big \} \overset{\substack{\eqref{eq:eigenvalue} \\ \vspace{-3pt}}}{=} \bigg \{ c + Vz ~ \bigg | ~ \frac{z_{(1)}^2}{\lambda_1} + \dotsc + \frac{z_{(n)}^2}{\lambda_n} \leq 1 \bigg \}.
		\end{split}
		\label{eq:proofEllipse2}
	\end{equation}
	We define the factors $\alpha_k$ of the CPZ as $\alpha_k = \frac{z_{(k)}}{\sqrt{\lambda_k}}$, $k = 1,\dots,n$, so that 
	\begin{equation}
		z_{(k)} = \sqrt{\lambda_k} ~ \alpha_k.
		\label{eq:varTrans}
	\end{equation}
	Inserting \eqref{eq:varTrans} into \eqref{eq:proofEllipse2} finally yields
	\begin{equation*}
	\setlength{\jot}{12pt}
		\begin{split}
		& \bigg \{ c + Vz ~ \bigg | ~ \frac{z_{(1)}^2}{\lambda_1} + \dotsc + \frac{z_{(n)}^2}{\lambda_n} \leq 1 \bigg \} \overset{\substack{\eqref{eq:varTrans} \\ \vspace{-3pt}}}{=} \bigg \{ c + \sum_{k=1}^n \sqrt{\lambda_k}~ \alpha_k \, V_{(\cdot,k)} ~ \bigg | ~ \alpha_1^2 + \dotsc + \alpha_n^2 \leq 1 \bigg \} \overset{\substack{\eqref{eq:proofEllipse1} \\ \eqref{eq:proofEllipse3} \\ \vspace{-3pt}}}{=} \\
		&  \bigg \{ c + \sum_{k=1}^n \sqrt{\lambda_k} ~ \alpha_k \, V_{(\cdot,k)} ~ \bigg | ~ \shortminus 0.5 \, \alpha_{n+1} + \alpha_1^2 + \dotsc + \alpha_n^2 = 0.5, ~ \alpha_1,\dots,\alpha_{n+1} \in [\shortminus 1,1] \bigg \} \\ 
		& \overset{\substack{\eqref{eq:ellipsoid}\\ \vspace{-3pt}}}{=} \langle c,G,E,A,b,\CE \rangle_{CPZ},
		\end{split}
	\end{equation*}
	which concludes the proof. 

\Comp Computation of the eigenvalue decomposition $Q = V^T D V$ in \eqref{eq:eigenvalue} has complexity $\mathcal{O}(n^3)$ \cite{Pan1999}. The computation of $G$ in \eqref{eq:ellipsoid} requires $n^2$ multiplications and the calculation of $n$ square roots and therefore has complexity $\mathcal{O}(n^2) + \mathcal{O}(n) = \mathcal{O}(n^2)$. Since all other required operations are concatenations, the overall complexity results by adding the complexity of the eigenvalue decomposition and the complexity of computing $G$, which yields $\mathcal{O}(n^2) + \mathcal{O}(n^3) = \mathcal{O}(n^3)$. \hfill $\square$
\end{proof}


\section{Enclosure by Other Set Representations}
\label{sec:enclosure}

To speed up computations, one often encloses sets by simpler set representations in set-based computing. In this section, we therefore show how to enclose CPZs by constrained zonotopes, polynomial zonotopes, zonotopes, and intervals. The over-approximation error for all enclosures can be reduced by applying rescaling as described in Sec.~\ref{sec:rescaling} in advance. To demonstrate the tightness of the enclosures, we use the CPZ
\begin{equation}
	\mathcal{CPZ} = \bigg \langle \begin{bmatrix} 0 \\ 0 \end{bmatrix}, \begin{bmatrix} 1 & 0.5 & 1 & 0.5 \\ 0 & 1 & 1 & 0.5 \end{bmatrix}, \begin{bmatrix} 1 & 0 & 2 & 0 \\ 0 & 1 & 1 & 0 \\ 0 & 0 & 0 & 1 \end{bmatrix} \begin{bmatrix} 1 & \shortminus 0.5 & 0.5 \end{bmatrix}, 0.5, \begin{bmatrix} 1 & 0 & 0 \\ 0 & 1 & 2 \\ 0 & 1 & 0 \end{bmatrix} \bigg \rangle_{CPZ}
	\label{eq:exampleCPZenclose}
\end{equation}
as a running example throughout this section.

\subsection{Constrained Zonotopes}

We first show how to enclose a CPZ by a constrained zonotope:

\begin{proposition}
	(Constrained Zonotope Enclosure) Given $\mathcal{CPZ} = \langle c,G,E,A,b,\linebreak[3] \CE \rangle_{CPZ} \subset \R^n$, the operation \operator{conZono} returns a constrained zonotope that encloses $\mathcal{CPZ}$:
	\begin{equation*}
		\mathcal{CPZ} \subseteq \operator{conZono}(\mathcal{CPZ}) = \underbrace{\langle c_z,G_z,A_z,\shortminus b_z \rangle_{CZ}}_{\mathcal{CZ}}
	\end{equation*}
	with
	\begin{equation*}
		\underbrace{\bigg \langle \begin{bmatrix} c_z \\ b_z \end{bmatrix} ,\begin{bmatrix}G_z \\ A_z \end{bmatrix} \bigg \rangle_Z}_{\mathcal{Z}^+} = \operator{zono}\bigg( \operator{compact} \bigg( \underbrace{ \bigg\langle \begin{bmatrix} c \\ \shortminus b \end{bmatrix},\begin{bmatrix} G & \mathbf{0} \\ \mathbf{0} & A \end{bmatrix},\begin{bmatrix} E & \CE \end{bmatrix} \bigg\rangle_{PZ}}_{\mathcal{PZ}^+} \bigg) \bigg),
	\end{equation*}
    where the \operator{compact} operation as defined in \cite[Prop.~2]{Kochdumper2019} returns a regular polynomial zonotope and the \operator{zono} operation as defined in \cite[Prop.~5]{Kochdumper2019} returns a zonotope that encloses a polynomial zonotope. The computational complexity is $\mathcal{O}(\mu^2)$ with respect to the representation size $\mu$ and $\mathcal{O}(n^2 \log(n))$ with respect to the dimension $n$.
	\label{prop:conZonoEncloseCPZ}
\end{proposition}
\begin{proof} 
	To obtain an enclosing constrained zonotope we calculate a zonotope enclosure of the corresponding lifted polynomial zonotope as defined in Lemma~\ref{lemma:liftCPZ}. Back-transformation of the lifted zonotope to the original state space then yields an enclosing constrained zonotope:
	\begin{equation*}
	\setlength{\jot}{12pt} 
	\begin{split}
		\forall x \in \Rn, ~~ (x \in \mathcal{CPZ}) & \overset{\substack{\text{Lemma}~\ref{lemma:liftCPZ}\\ \vspace{-2pt} }}{\Rightarrow} \bigg( \begin{bmatrix} x \\ \mathbf{0}  \end{bmatrix} \in \mathcal{PZ}^+ \bigg) \\
		& \overset{\substack{\mathcal{PZ}^+ \subseteq \mathcal{Z}^+\\ \vspace{-1pt}}}{\Rightarrow} \bigg( \begin{bmatrix} x \\ \mathbf{0}  \end{bmatrix} \in \mathcal{Z}^+ \bigg) \overset{\substack{\text{Lemma}~\ref{lemma:liftCPZ}\\ \vspace{-2pt}}}{\Rightarrow} (x \in \mathcal{CZ}),
	\end{split}
	\end{equation*}
where we omitted the \operator{compact} operation since it only changes the representation of the set, but not the set itself.
	
	\Comp Let $n^+ = n+m$, $p^+ = p$, and $h^+ = h + q$ denote the dimension, the number of factors, and the number of generators of the lifted polynomial zonotope $\mathcal{PZ}^+$. According to \cite[Prop.~2]{Kochdumper2019}, the \operator{compact} operation for polynomial zonotopes has complexity $\mathcal{O}( p^+ h^+ \log(h^+)) = \mathcal{O}(p(h+q)\log(h+q))$. Moreover, the complexity for the \operator{zono} operation is $\mathcal{O}(p^+ h^+) + \mathcal{O}(n^+ h^+) = \mathcal{O}(p(h+q)) + \mathcal{O}((n+m)(h+q))$ according to \cite[Prop.~5]{Kochdumper2019}. The overall computational complexity is therefore 
	\begin{equation*}
		\mathcal{O}\big(\underbrace{p(h+q)\log(h+q)}_{\overset{\eqref{eq:repSize}}{\leq} \mu \log(\mu)}\big) + \mathcal{O}\big(\underbrace{p(h+q)}_{\overset{\eqref{eq:repSize}}{\leq} \mu}\big) + \mathcal{O}\big(\underbrace{(n+m)(h+q)}_{\overset{\eqref{eq:repSize}}{\leq} \mu^2}\big) = \mathcal{O}(\mu^2),
	\end{equation*}
	which is $\mathcal{O}(n^2 \log(n))$ using \eqref{eq:complexity}. \hfill $\square$ 
\end{proof}
The enclosing constrained zonotope for the CPZ in \eqref{eq:exampleCPZenclose} is shown in Fig.~\ref{fig:enclosure}.

\subsection{Polynomial Zonotopes} 

Clearly, an enclosing polynomial zonotope for a CPZ can simply be obtained by dropping the constraints. However, this might yield large over-approximation errors. Another possibility is to reduce all constraints using Prop.~\ref{prop:reduceConCPZ} introduced later in Sec.~\ref{sec:complexityReduction}. Which method results in the tighter enclosure depends on the CPZ. The resulting enclosing polynomial zonotope for the CPZ in \eqref{eq:exampleCPZenclose} obtained by dropping the constraints is visualized in Fig.~\ref{fig:enclosure}.

\subsection{Zonotopes and Intervals}

An enclosure of a CPZ by a zonotope or interval can be computed using the previously presented enclosures by constrained zonotopes or polynomial zonotopes. 
For polynomial zonotopes, an enclosing zonotope can be computed using \cite[Prop.~5]{Kochdumper2019}, and an enclosing interval can be computed based on the support function enclosure in \cite[Prop.~7]{Kochdumper2019}. For constrained zonotopes, an enclosing zonotope can be calculated by reducing all constraints as described in \cite[Sec.~4.2]{Scott2016}, and an enclosing interval can be computed using linear programming \cite[Prop.~1]{Rego2018}. 
 
\begin{figure}
  \centering
  \psfragfig[width=0.95\columnwidth]{./Figures/enclosure}{
  \psfrag{a}[c][c]{$x_1$}
  \psfrag{e}[c][c]{$x_2$}
  \psfrag{b}[c][c]{\rotatebox[origin=c]{180}{$x_2$}}
  \psfrag{f}[c][c]{\rotatebox[origin=c]{180}{$x_2$}}
  }
  \caption{Enlosing constrained zonotope (left) and enclosing polynomial zonotope (right) for $\mathcal{CPZ}$ in \eqref{eq:exampleCPZenclose}.}
  \label{fig:enclosure}
\end{figure}


\section{Set Operations}

In this section, we derive closed-form expressions for all set operations introduced in Sec.~\ref{sec:Preliminaries} on CPZs. We begin with the linear map:

\begin{proposition}
	(Linear Map) Given $\mathcal{CPZ} = \langle c,G,E,A,b,\CE \rangle_{CPZ} \subset \Rn$ and a matrix $M \in \R^{w \times n}$, the linear map is
	\begin{equation*}
		M \otimes \mathcal{CPZ} = \langle M c, M G,E,A,b,\CE \rangle_{CPZ},
	\end{equation*}
	which has complexity $\mathcal{O}(w \mu)$ with respect to the representation size $\mu$ and complexity $\mathcal{O}(w n^2)$ with respect to the dimension $n$. The resulting CPZ is regular if $\mathcal{CPZ}$ is regular. 
	\label{prop:linearMap}
\end{proposition}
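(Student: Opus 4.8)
The plan is to apply the definition of the linear map in \eqref{eq:defLinTrans} pointwise to the set described by \eqref{eq:CPZ} and exploit linearity of $x \mapsto Mx$. Concretely, for an arbitrary point
$s = c + \sum_{i=1}^{h} \big( \prod_{k=1}^p \alpha_k^{E_{(k,i)}} \big) G_{(\cdot,i)} \in \mathcal{CPZ}$
whose factors satisfy the constraint $\sum_{i=1}^{q} \big( \prod_{k=1}^p \alpha_k^{\CE_{(k,i)}} \big) A_{(\cdot,i)} = b$, I would write $Ms = Mc + \sum_{i=1}^{h} \big( \prod_{k=1}^p \alpha_k^{E_{(k,i)}} \big) M G_{(\cdot,i)}$, observing that the scalar coefficients $\prod_{k=1}^p \alpha_k^{E_{(k,i)}}$ pass through $M$ unchanged and that the constraint does not involve the generators $G_{(\cdot,i)}$ at all, hence is untouched. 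This shows $M \otimes \mathcal{CPZ} \subseteq \langle Mc, MG, E, A, b, \CE \rangle_{CPZ}$; the reverse inclusion follows by reading the same chain of equalities backwards, since every factor tuple $(\alpha_1,\dots,\alpha_p)$ feasible for the image CPZ is feasible for $\mathcal{CPZ}$ and vice versa (same $A$, $b$, $\CE$). I would also remark that $E$ and $\CE$ are copied verbatim, so the regularity conditions \eqref{eq:regularityGen}--\eqref{eq:regularityCon} are inherited.

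For the complexity count I would proceed operation by operation under the convention stated in the notation section (count binary operations, ignore concatenations and initializations). Computing $Mc$ costs $\mathcal{O}(wn)$ operations, and computing $MG$ with $G \in \R^{n\times h}$ costs $\mathcal{O}(wnh)$ operations; the matrices $E,A,b,\CE$ are merely reused. The total is $\mathcal{O}(wn(h+1))$. To express this in terms of $\mu$ from \eqref{eq:repSize}, I would use $nh \le \mu$ and $n \le \mu$, giving $\mathcal{O}(w\mu)$. For the dimension-based bound I would substitute $h = a_h n$ from \eqref{eq:complexity} into $\mathcal{O}(wnh)$ to obtain $\mathcal{O}(wn^2)$.

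The argument is essentially a one-line application of linearity, so there is no real mathematical obstacle; the only point requiring a little care is the complexity bookkeeping, namely making sure the matrix-vector and matrix-matrix multiplication counts are bounded cleanly by $\mu$ (for which the $nh \le \mu$ and $n \le \mu$ estimates suffice) and that the reduction $h = a_h n$ is the right substitution for the dimension-based statement. One might additionally note, for completeness, that no \operator{compactGen} or \operator{compactCon} post-processing is needed precisely because the exponent matrices are unchanged.
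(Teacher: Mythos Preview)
Your proposal is correct and follows essentially the same approach as the paper: the paper simply states that the result follows from inserting the definition of CPZs into the definition of $\otimes$, and for complexity it writes $\mathcal{O}(wnh) + \mathcal{O}(wn) = \mathcal{O}(wnh)$, then uses $nh \leq \mu$ and the substitution from \eqref{eq:complexity} exactly as you do. Your version is just more explicit (spelling out both inclusions, noting regularity is inherited, and observing no \operator{compactGen}/\operator{compactCon} is needed), but the underlying argument is identical.
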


\begin{proof}
	The result follows directly from inserting the definition of CPZs in Def.~\ref{def:CPZ} into the definition of the operator $\otimes$ in \eqref{eq:defLinTrans}.

\Comp The complexity results from the complexity of matrix multiplications and is therefore $\mathcal{O}(wnh) + \mathcal{O}(wn) = \mathcal{O}(wnh)$. Since $nh \leq \mu$ according to \eqref{eq:repSize}, it holds that $\mathcal{O}(wnh) = \mathcal{O}(w\mu)$. Using \eqref{eq:complexity}, it furthermore holds that $\mathcal{O}(wnh) = \mathcal{O}(w n^2)$. \hfill $\square$
\end{proof}
Next, we consider the Minkowski sum:

\begin{proposition}
	(Minkowski Sum) Given $\mathcal{CPZ}_1 = \langle c_1,G_1, E_1, A_1, b_1, \CE_1 \rangle_{CPZ} \subset \Rn$ and $\mathcal{CPZ}_2 = \langle c_2, G_2, E_2, A_2,b_2, \CE_2 \rangle_{CPZ} \subset \Rn$, their Minkowski sum is
	\begin{equation*}
			\mathcal{CPZ}_1 \oplus \mathcal{CPZ}_2 = \bigg \langle c_1 + c_2, \begin{bmatrix} G_1 & G_2 \end{bmatrix}, \begin{bmatrix} E_1 & \mathbf{0} \\ \mathbf{0} & E_2 \end{bmatrix}, \begin{bmatrix} A_1 & \mathbf{0} \\ \mathbf{0} & A_2 \end{bmatrix}, \begin{bmatrix} b_1 \\ b_2 \end{bmatrix}, \begin{bmatrix} \CE_1 & \mathbf{0} \\ \mathbf{0} & \CE_2 \end{bmatrix}  \bigg \rangle_{CPZ},
	\end{equation*}	
	which has complexity $\mathcal{O}(n)$ with respect to the dimension $n$. The resulting CPZ is regular if $\mathcal{CPZ}_1$ and $\mathcal{CPZ}_2$ are regular.
	\label{prop:addition}
\end{proposition}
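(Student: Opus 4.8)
The plan is to prove the Minkowski sum formula by direct set manipulation, mirroring the structure used in the earlier proofs (Prop.~\ref{prop:convConZonotope} and the identities \eqref{eq:sumIdentity}, \eqref{eq:conIdentity}). The key observation is that the factors of $\mathcal{CPZ}_1$ and $\mathcal{CPZ}_2$ can be treated as disjoint: if $\mathcal{CPZ}_1$ has factors $\alpha_1,\dots,\alpha_{p_1}$ and $\mathcal{CPZ}_2$ has factors $\alpha_1,\dots,\alpha_{p_2}$, I relabel the latter as $\alpha_{p_1+1},\dots,\alpha_{p_1+p_2}$. Since any point of $\mathcal{CPZ}_1\oplus\mathcal{CPZ}_2$ is a sum $s_1+s_2$ with $s_1,s_2$ chosen \emph{independently}, this relabeling is lossless, and conversely any choice of the combined $p_1+p_2$ factors yields a valid pair $(s_1,s_2)$.

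First I would write out $\mathcal{CPZ}_1 \oplus \mathcal{CPZ}_2$ using the definition \eqref{eq:defMinSum}, substituting the CPZ definition \eqref{eq:CPZ} for each summand. After relabeling the factors of the second set, the center becomes $c_1+c_2$, and the generator sum splits into a block over the original factors (exponents $E_1$ padded with zero rows for the new factors) and a block over the new factors (exponents $E_2$ padded with zero rows for the old factors); this is exactly the block-diagonal exponent matrix $\bigl[\begin{smallmatrix} E_1 & \mathbf 0 \\ \mathbf 0 & E_2\end{smallmatrix}\bigr]$ together with the concatenated generator matrix $[G_1~G_2]$, which is the identity \eqref{eq:sumIdentity} applied to factor-disjoint pieces. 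The two constraint systems, now over disjoint factor sets, combine into the stacked system with block-diagonal constraint generator matrix and block-diagonal constraint exponent matrix, which is precisely the identity \eqref{eq:conIdentity} (again specialized to the disjoint-factor case). Reading off the resulting tuple gives the claimed shorthand.

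For the complexity claim I would note that the construction involves only concatenations of the given matrices and vectors and the single vector addition $c_1+c_2$; by the convention in the notation section, concatenations are free, so the only counted operations are the $n$ additions for $c_1+c_2$, giving $\mathcal{O}(n)$. (One should also remark that the resulting CPZ is automatically regular whenever $\mathcal{CPZ}_1$ and $\mathcal{CPZ}_2$ are, since the zero-padding keeps distinct columns distinct and the block structure prevents any column from becoming $\mathbf 0$ — each column of $\bigl[\begin{smallmatrix} E_1 & \mathbf 0 \\ \mathbf 0 & E_2\end{smallmatrix}\bigr]$ inherits a nonzero entry from $E_1$ or $E_2$ — so no call to \operator{compactGen} or \operator{compactCon} is needed.)

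I do not expect any real obstacle here: the only subtlety is making the factor relabeling explicit and justifying that independence of the two selections is exactly what makes the disjoint union of factor sets correct, which is the content of invoking \eqref{eq:sumIdentity} and \eqref{eq:conIdentity}. Everything else is bookkeeping.
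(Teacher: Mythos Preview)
Your proposal is correct and follows essentially the same approach as the paper: relabel the factors of $\mathcal{CPZ}_2$ to be disjoint from those of $\mathcal{CPZ}_1$, substitute into the Minkowski-sum definition, and apply the identities \eqref{eq:sumIdentity} and \eqref{eq:conIdentity} to read off the block-structured result, with the $\mathcal{O}(n)$ complexity coming solely from $c_1+c_2$. Your additional remark that the result is automatically regular (so no \operator{compactGen}/\operator{compactCon} call is needed) is a nice observation that the paper leaves implicit.
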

\begin{proof}
	The result is obtained by inserting the definition of CPZs in Def.~\ref{def:CPZ} into the definition of the Minkowski sum in \eqref{eq:defMinSum}:
	\begin{equation*}
		\begin{split}
		&\mathcal{CPZ}_1 \oplus \mathcal{CPZ}_2 \overset{\substack{ \eqref{eq:defMinSum} \\ \vspace{-3pt}}}{=} \big\{ s_1 + s_2 ~ \big|~ s_1 \in \mathcal{CPZ}_1,~s_2 \in \mathcal{CPZ}_2 \big\}  \overset{\substack{\text{Def.}~\ref{def:CPZ} \\ \vspace{-3pt}}}{=} \\
		& ~ \\
		& \bigg \{ c_1 + c_2 + \sum_{i=1}^{h_1} \bigg( \prod_{k=1}^{p_1} \alpha_{k}^{E_{1(k,i)}} \bigg) G_{1(\cdot,i)} + \sum_{i=1}^{h_2} \bigg( \prod_{k=1}^{p_2} \alpha_{p_1+k}^{E_{2(k,i)}} \bigg) G_{2(\cdot,i)} ~ \bigg | \\
		& ~~ \sum_{i=1}^{q_1} \bigg( \prod_{k=1}^{p_1} \alpha_k^{\CE_{1(k,i)}} \bigg) A_{1(\cdot,i)} = b_1, ~ \sum_{i=1}^{q_2} \bigg( \prod_{k=1}^{p_2} \alpha_{p_1+k}^{\CE_{2(k,i)}} \bigg) A_{2(\cdot,i)} = b_2, ~ \alpha_k,\alpha_{p_1 + k} \in [\shortminus 1,1]   \bigg \} \\
		& ~ \\
		& \overset{\substack{\eqref{eq:sumIdentity},\eqref{eq:conIdentity} \\ \vspace{-3pt} }}{=} \bigg \langle c_1 + c_2, \begin{bmatrix} G_1 & G_2 \end{bmatrix}, \begin{bmatrix} E_1 & \mathbf{0} \\ \mathbf{0} & E_2 \end{bmatrix}, \begin{bmatrix} A_1 & \mathbf{0} \\ \mathbf{0} & A_2 \end{bmatrix}, \begin{bmatrix} b_1 \\ b_2 \end{bmatrix}, \begin{bmatrix} \CE_1 & \mathbf{0} \\ \mathbf{0} & \CE_2 \end{bmatrix}  \bigg \rangle_{CPZ},
		\end{split}
	\end{equation*}
	where we used the identities \eqref{eq:sumIdentity} and \eqref{eq:conIdentity}.

\Comp The computation of the new constant offset $c_1 + c_2$ has complexity $\mathcal{O}(n)$. Since all other operations required for the construction of the resulting CPZ are concatenations, it holds that the overall complexity is $\mathcal{O}(n)$. \hfill $\square$
\end{proof}
Now, we provide a closed-form expression for the Cartesian product:

\begin{proposition}
	(Cartesian Product) Given $\mathcal{CPZ}_1 = \langle c_1, G_1, E_1, A_1,b_1, \CE_1 \rangle_{CPZ} \linebreak[3] \subset \Rn$ and $\mathcal{CPZ}_2 = \langle c_2, G_2, E_2, A_2, b_2, \CE_2 \rangle_{CPZ} \subset \R^w$, their Cartesian product is
	\begin{equation*}
			\mathcal{CPZ}_1 \times \mathcal{CPZ}_2 = \bigg \langle \begin{bmatrix} c_1 \\ c_2 \end{bmatrix} \begin{bmatrix} G_1 & \mathbf{0} \\ \mathbf{0} & G_2 \end{bmatrix}, \begin{bmatrix} E_1 & \mathbf{0} \\ \mathbf{0} & E_2 \end{bmatrix}, \begin{bmatrix} A_1 & \mathbf{0} \\ \mathbf{0} & A_2 \end{bmatrix}, \begin{bmatrix} b_1 \\ b_2 \end{bmatrix}, \begin{bmatrix} \CE_1 & \mathbf{0} \\ \mathbf{0} & \CE_2 \end{bmatrix}  \bigg \rangle_{CPZ},
	\end{equation*}	
	which has complexity $\mathcal{O}(1)$. The resulting CPZ is regular if $\mathcal{CPZ}_1$ and $\mathcal{CPZ}_2$ are regular.
	\label{prop:cartProduct}
\end{proposition}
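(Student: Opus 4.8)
The plan is to follow the same recipe used for the Minkowski sum in Prop.~\ref{prop:addition}: substitute the definition of CPZs from \eqref{eq:CPZ} into the definition of the Cartesian product in \eqref{eq:defCartProduct}, and then collapse the resulting set description into CPZ form using the identities \eqref{eq:sumIdentity} and \eqref{eq:conIdentity}. First I would make the factors of the two sets disjoint: since a point of $\mathcal{CPZ}_1$ and a point of $\mathcal{CPZ}_2$ are chosen independently, the $p_2$ factors of $\mathcal{CPZ}_2$ are relabelled as $\alpha_{p_1+1},\dots,\alpha_{p_1+p_2}$, so that the combined object has $p_1+p_2$ factors, ranging over $[-1,1]$ independently, and the two exponent matrices act on disjoint factor blocks. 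This is exactly the mechanism behind the block-diagonal (rather than merely concatenated) exponent matrices $\left[\begin{smallmatrix} E_1 & \mathbf 0 \\ \mathbf 0 & E_2 \end{smallmatrix}\right]$ and $\left[\begin{smallmatrix} \CE_1 & \mathbf 0 \\ \mathbf 0 & \CE_2 \end{smallmatrix}\right]$.

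The key structural observation specific to the Cartesian product is that a point of $\mathcal{CPZ}_1 \times \mathcal{CPZ}_2$ is the vertical stack $[s_1~s_2]^T$ with $s_1 \in \Rn$ and $s_2 \in \R^w$. Hence the starting point is $[c_1~c_2]^T$; each dependent generator $G_{1(\cdot,i)}$ contributes only to the first $n$ coordinates, so it is padded below with a block of $w$ zeros, and each $G_{2(\cdot,i)}$ contributes only to the last $w$ coordinates, so it is padded above with a block of $n$ zeros; together these give the block-diagonal generator matrix $\left[\begin{smallmatrix} G_1 & \mathbf 0 \\ \mathbf 0 & G_2 \end{smallmatrix}\right]$. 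The constraints on the factors are inherited unchanged: the factors $\alpha_1,\dots,\alpha_{p_1}$ must satisfy the constraint of $\mathcal{CPZ}_1$ and the factors $\alpha_{p_1+1},\dots,\alpha_{p_1+p_2}$ the constraint of $\mathcal{CPZ}_2$. This is precisely the two-constraint situation of \eqref{eq:conIdentity}, which yields the block-diagonal constraint matrix $\left[\begin{smallmatrix} A_1 & \mathbf 0 \\ \mathbf 0 & A_2 \end{smallmatrix}\right]$, the stacked constraint vector $[b_1~b_2]^T$, and the block-diagonal constraint exponent matrix. Reading off the six components gives the claimed CPZ.

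For the complexity claim I would simply note that every component of the resulting CPZ is assembled purely by stacking already-available matrices and vectors and by padding with zero blocks, i.e. only by concatenations and initializations, which per the conventions stated in Section~\ref{sec:Preliminaries} are not counted; hence the computation has complexity $\mathcal{O}(1)$.

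I do not expect a genuine obstacle here. The only point requiring a little care is verifying that the padded-generator, block-diagonal-exponent description equals $\mathcal{CPZ}_1 \times \mathcal{CPZ}_2$ exactly and not merely up to inclusion; both inclusions hold because the two factor blocks are disjoint, so letting $(\alpha_1,\dots,\alpha_{p_1+p_2})$ range over $[-1,1]^{p_1+p_2}$ is the same as letting the two blocks range independently, and the two constraint systems share no factors and therefore decouple. This is a direct appeal to \eqref{eq:sumIdentity} and \eqref{eq:conIdentity} rather than a new argument.
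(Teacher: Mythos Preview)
Your proposal is correct and follows essentially the same approach as the paper: insert the CPZ definition \eqref{eq:CPZ} into \eqref{eq:defCartProduct}, relabel the factors of $\mathcal{CPZ}_2$ to make the two factor blocks disjoint, and then invoke \eqref{eq:sumIdentity} and \eqref{eq:conIdentity} to read off the block-diagonal CPZ. The complexity argument is also the same---only concatenations and initializations are needed, hence $\mathcal{O}(1)$.
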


\begin{proof}
	The result is obtained by inserting the definition of CPZs in Def.~\ref{def:CPZ} into the definition of the Cartesian product in \eqref{eq:defCartProduct}:
	\begin{equation*}
		\begin{split}
		&\mathcal{CPZ}_1 \times \mathcal{CPZ}_2 \overset{\substack{\eqref{eq:defCartProduct} \\ \vspace{-3pt}}}{=} \big\{ [s_1^T ~s_2^T]^T ~\big|~s_1 \in \mathcal{CPZ}_1,~s_2 \in \mathcal{CPZ}_2 \big\} \overset{\substack{\text{Def.}~\ref{def:CPZ} \\ \vspace{-4pt}}}{=} \\
		& ~ \\
		& \bigg \{ \begin{bmatrix} c_1 \\ \mathbf{0} \end{bmatrix} + \begin{bmatrix} \mathbf{0} \\ c_2 \end{bmatrix} + \sum_{i=1}^{h_1} \bigg( \prod_{k=1}^{p_1} \alpha_k^{E_{1(k,i)}} \bigg) \begin{bmatrix} G_{1(\cdot,i)} \\ \mathbf{0} \end{bmatrix} + \sum_{i=1}^{h_2} \bigg( \prod_{k=1}^{p_2} \alpha_{p_1 + k}^{E_{2(k,i)}} \bigg) \begin{bmatrix} \mathbf{0} \\ G_{2(\cdot,i)} \end{bmatrix}  ~ \bigg | \\
		& ~~~ \sum_{i=1}^{q_1} \bigg( \prod_{k=1}^{p_1} \alpha_k^{\CE_{1(k,i)}} \bigg) A_{1(\cdot,i)} = b_1, ~ \sum_{i=1}^{q_2} \bigg( \prod_{k=1}^{p_2} \alpha_{p_1 + k}^{\CE_{2(k,i)}} \bigg) A_{2(\cdot,i)} = b_2, ~ \alpha_k,\alpha_{p_1+k} \in [\shortminus 1,1]   \bigg \} \\
		& ~ \\
		& \overset{\substack{\eqref{eq:sumIdentity},\eqref{eq:conIdentity} \\ \vspace{-3pt}}}{=} \bigg \langle \begin{bmatrix} c_1 \\ c_2 \end{bmatrix} \begin{bmatrix} G_1 & \mathbf{0} \\ \mathbf{0} & G_2 \end{bmatrix}, \begin{bmatrix} E_1 & \mathbf{0} \\ \mathbf{0} & E_2 \end{bmatrix}, \begin{bmatrix} A_1 & \mathbf{0} \\ \mathbf{0} & A_2 \end{bmatrix}, \begin{bmatrix} b_1 \\ b_2 \end{bmatrix}, \begin{bmatrix} \CE_1 & \mathbf{0} \\ \mathbf{0} & \CE_2 \end{bmatrix}  \bigg \rangle_{CPZ},
		\end{split}
	\end{equation*}
	where we used the identities in \eqref{eq:sumIdentity} and \eqref{eq:conIdentity}.

\Comp The construction of the resulting CPZ only involves concatenations and therefore has constant complexity $\mathcal{O}(1)$. \hfill $\square$
\end{proof}
Before we examine the convex hull, we first derive a closed-form expression for the linear combination since we can reuse this result for the convex hull:

\begin{proposition}
	(Linear Combination) Given $\mathcal{CPZ}_1 = \langle c_1, G_1, E_1, A_1, b_1, \linebreak[3] \CE_1 \rangle_{CPZ} \subset \Rn$ and $\mathcal{CPZ}_2 = \langle c_2, G_2, E_2,A_2, b_2, \CE_2 \rangle_{CPZ} \subset \Rn$, their linear combination is
	\begin{equation*}
	\begin{split}
			comb(\mathcal{CPZ}_1, \mathcal{CPZ}_2) = \bigg \langle & \frac{1}{2} (c_1 + c_2), \frac{1}{2} \begin{bmatrix} (c_1 - c_2) & G_1 & G_1 & G_2 & \shortminus G_2 \end{bmatrix}, \\
		&  \begin{bmatrix} \mathbf{0} & E_1 & E_1 & \mathbf{0} & \mathbf{0} \\ \mathbf{0} &  \mathbf{0} & \mathbf{0} & E_2 & E_2 \\ 1 & \mathbf{0} & \mathbf{1} & \mathbf{0} & \mathbf{1} \end{bmatrix}, \begin{bmatrix} A_1 & \mathbf{0} \\ \mathbf{0} & A_2 \end{bmatrix},\begin{bmatrix} b_1 \\ b_2 \end{bmatrix}, \begin{bmatrix} \CE_1 & \mathbf{0} \\ \mathbf{0} & \CE_2 \\ \mathbf{0} & \mathbf{0} \end{bmatrix}  \bigg \rangle_{CPZ},
		\end{split}
	\end{equation*}	
	which has complexity $\mathcal{O}(\mu_1 + \mu_2)$ with respect to the representation sizes $\mu_1$ and $\mu_2$ and complexity $\mathcal{O}(n^2)$ with respect to the dimension $n$. The resulting CPZ is regular if $\mathcal{CPZ}_1$ and $\mathcal{CPZ}_2$ are regular.  
	\label{prop:linComb}
\end{proposition}

\begin{proof} 
The result is obtained by inserting the definition of CPZs in Def.~\ref{def:CPZ} into the definition of the linear combination in \eqref{eq:defLinComb}:
	\begin{align*}
		& comb(\mathcal{CPZ}_1, \mathcal{CPZ}_2) \overset{\substack{ \eqref{eq:defLinComb} \\ \vspace{-3pt}}}{=} \bigg\{ \frac{1+\lambda}{2} \, s_1 + \frac{1-\lambda}{2} \, s_2 \, \bigg| \, s_1 \in \mathcal{CPZ}_1,\,s_2 \in \mathcal{CPZ}_2, \, \lambda \in [\shortminus 1,1] \bigg\} \overset{\substack{\text{Def.}~\ref{def:CPZ} \\ \vspace{-4pt}}}{=} \\
		& ~ \\
		&  \bigg \{ \frac{1}{2}(c_1 + c_2) + \frac{1}{2}(c_1 - c_2) \lambda +  \frac{1}{2} \sum_{i=1}^{h_1} \bigg( \prod_{k=1}^{p_1} \alpha_k^{E_{1(k,i)}} \bigg) G_{1(\cdot,i)} +  \frac{1}{2} \sum_{i=1}^{h_1} \lambda \bigg( \prod_{k=1}^{p_1} \alpha_k^{E_{1(k,i)}} \bigg) G_{1(\cdot,i)} \\
		& ~~ + \frac{1}{2} \sum_{i=1}^{h_2} \bigg( \prod_{k=1}^{p_2} \alpha_{p_1+k}^{E_{2(k,i)}} \bigg) G_{2(\cdot,i)} - \frac{1}{2} \sum_{i=1}^{h_2} \lambda \bigg( \prod_{k=1}^{p_2} \alpha_{p_1+k}^{E_{2(k,i)}} \bigg) G_{2(\cdot,i)} ~ \bigg | ~  \\
		& ~~~ \sum_{i=1}^{q_1} \bigg( \prod_{k=1}^{p_1} \alpha_k^{\CE_{1(k,i)}} \bigg) A_{1(\cdot,i)} = b_1,~\sum_{i=1}^{q_2} \bigg( \prod_{k=1}^{p_2} \alpha_{p_1+k}^{\CE_{2(k,i)}} \bigg) A_{2(\cdot,i)} = b_2, ~ \alpha_k,\alpha_{p_1+k},\lambda \in [\shortminus 1,1]  \bigg \} \\
		& ~ \\
		& \overset{\substack{\eqref{eq:sumIdentity},\eqref{eq:conIdentity} \\ \alpha_{p_1 + p_2 + 1} := \lambda \\ \vspace{-1pt} }}{=} \bigg \langle  \frac{1}{2} (c_1 + c_2), \frac{1}{2} \begin{bmatrix} (c_1 - c_2) & G_1 & G_1 & G_2 & \shortminus G_2 \end{bmatrix}, \\
		& ~~~~~~~~~~~~~~~~~~ \begin{bmatrix} \mathbf{0} & E_1 & E_1 & \mathbf{0} & \mathbf{0} \\ \mathbf{0} &  \mathbf{0} & \mathbf{0} & E_2 & E_2 \\ 1 & \mathbf{0} & \mathbf{1} & \mathbf{0} & \mathbf{1} \end{bmatrix}, \begin{bmatrix} A_1 & \mathbf{0} \\ \mathbf{0} & A_2 \end{bmatrix},\begin{bmatrix} b_1 \\ b_2 \end{bmatrix}, \begin{bmatrix} \CE_1 & \mathbf{0} \\ \mathbf{0} & \CE_2 \\ \mathbf{0} & \mathbf{0} \end{bmatrix}  \bigg \rangle_{CPZ},
	\end{align*}
	where we used the identities in \eqref{eq:sumIdentity} and \eqref{eq:conIdentity}. For the transformation in the last line, we substituted $\lambda$ with an additional factor $\alpha_{p_1 + p_2 + 1}$. Since $\lambda \in [\shortminus 1,1]$ and $\alpha_{p_1 + p_2 + 1} \in [\shortminus 1,1]$, the substitution does not change the set. 

\Comp The construction of the constant offset $c = 0.5(c_1 + c_2)$ requires $n$ additions and $n$ multiplications. Moreover, the construction of the generator matrix $G = 0.5 \begin{bmatrix} (c_1 - c_2) & G_1 & G_1 & G_2 & \shortminus G_2 \end{bmatrix}$ requires $n$ subtractions and $n (2 h_1+2 h_2+ 1)$ multiplications. The overall complexity is therefore 
\begin{equation}
	\mathcal{O}(2n) + \mathcal{O}\big(n (2h_1 + 2 h_2 + 2)\big) = \mathcal{O}\big(\underbrace{n (h_1 + h_2)}_{\overset{\eqref{eq:repSize}}{\leq} \mu_1 + \mu_2}\big) = \mathcal{O}(\mu_1 + \mu_2),
	\label{eq:compLinComb}
\end{equation}
which is $\mathcal{O}(n^2)$ using \eqref{eq:complexity}. \hfill $\square$
\end{proof}
The convex hull can be computed based on the linear combination:

\begin{proposition}
	(Convex Hull) Given $\mathcal{CPZ}_1 = \langle c_1, G_1, E_1, A_1, b_1, \CE_1 \rangle_{CPZ} \subset \Rn$ and $\mathcal{CPZ}_2 \linebreak[3] = \langle c_2, G_2, E_2,A_2, b_2, \CE_2 \rangle_{CPZ} \subset \Rn$, their convex hull is
	\begin{equation*}
			conv(\mathcal{CPZ}_1, \mathcal{CPZ}_2) = \bigg \langle  a \, c,\begin{bmatrix} \overline{c} & \overline{G} & \overline{G} \end{bmatrix},\begin{bmatrix} \mathbf{0} & \overline{E} & \overline{E} \\ I_{a} & \mathbf{0} & \widehat{E} \end{bmatrix}, \begin{bmatrix} \overline{A} & \mathbf{0} \\ \mathbf{0} & \mathbf{1} \end{bmatrix}, \begin{bmatrix} \overline{b} \\ \shortminus n \end{bmatrix},\begin{bmatrix} \overline{\CE} & \mathbf{0} \\ \mathbf{0} & I_{a}  \end{bmatrix} \bigg\rangle_{CPZ},
	\end{equation*}	
	with
	\begin{equation}
	\setlength{\jot}{12pt} 
	\begin{split}
		& \langle c,G,E,A,b,\CE \rangle_{CPZ} = comb(\mathcal{CPZ}_1,\mathcal{CPZ}_2),~ a = n +1, ~ \overline{c} = \begin{bmatrix} c & \dots & c \end{bmatrix} \in \R^{n \times a}, ~  \\
		& \overline{G} = \begin{bmatrix} G & \dots & G \end{bmatrix} \in \R^{n \times ah},~\overline{E} = \begin{bmatrix} E & & \mathbf{0} \\ & \ddots &  \\ \mathbf{0} &  & E \end{bmatrix} \in \R^{ap \times ah},~ \widehat{E} = \begin{bmatrix} \mathbf{1} &  & \mathbf{0} \\  & \ddots &  \\ \mathbf{0} &  & \mathbf{1} \end{bmatrix} \in \R^{a \times ah}, \\
		& ~~~~ \overline{A} = \begin{bmatrix} A &  & \mathbf{0} \\  & \ddots &  \\ \mathbf{0} &  & A \end{bmatrix} \in \R^{a m \times a q},~\overline{b} = \begin{bmatrix} b \\ \vdots \\ b \end{bmatrix} \in \R^{am},~\overline{\CE} = \begin{bmatrix} \CE &  & \mathbf{0} \\  & \ddots &  \\ \mathbf{0} &  & \CE \end{bmatrix} \in \R^{ap \times a q},
	\end{split}
	\label{eq:convexHull2}
	\end{equation}	
	where the linear combination $comb(\mathcal{CPZ}_1,\mathcal{CPZ}_2)$ is calculated using Prop.~\ref{prop:linComb} and the scalars $p$, $h$, $q$, and $m$ denote respectively the number of factors, the number of generators, the number of constraint generators, and the number of constraints of the CPZ $\langle c,G,E,A,b,\CE \rangle_{CPZ}$. The complexity is $\mathcal{O}(\mu_1 + \mu_2)$ with respect to the representation sizes $\mu_1$ and $\mu_2$ and $\mathcal{O}(n^2)$ with respect to the dimension $n$. The resulting CPZ is regular if $\mathcal{CPZ}_1$ and $\mathcal{CPZ}_2$ are regular.  
	\label{prop:convHull}
\end{proposition}
\begin{proof}
	According to the definition of the convex hull in \eqref{eq:defConvHull}, the definition of the union in \eqref{eq:defUnion}, and the definition of the linear combination in \eqref{eq:defLinComb}, it holds that
	\begin{equation}
		\mathcal{CPZ}_1 \cup \mathcal{CPZ}_2 \subseteq comb(\mathcal{CPZ}_1,\mathcal{CPZ}_2) \subseteq conv(\mathcal{CPZ}_1,\mathcal{CPZ}_2).
		\label{eq:subsetsConvexHull}
	\end{equation}
	The relation in \eqref{eq:subsetsConvexHull} allows us to substitute the union in the definition of the convex hull in \eqref{eq:defConvHull} with the linear combination. This yields a resulting CPZ with fewer factors compared to using the union according to Theorem~\ref{theo:union}, which is often advantageous:
	\begin{align*}
		& conv(\mathcal{CPZ}_1,\mathcal{CPZ}_2) \overset{\eqref{eq:defConvHull}}{=} \bigg \{ \sum_{j=1}^{n+1} \lambda_j\, s_j ~ \bigg| ~ s_j \in \mathcal{CPZ}_1 \cup \mathcal{CPZ}_2,~\lambda_j \geq 0,~\sum_{j=1}^{n+1} \lambda_j = 1 \bigg\} \overset{\eqref{eq:subsetsConvexHull}}{=} \\
		& ~ \\[-5pt]
		& \bigg \{ \sum_{j=1}^{n+1} (1+\widehat{\lambda}_j) \, s_j ~ \bigg | ~ s_j \in comb(\mathcal{CPZ}_1,\mathcal{CPZ}_2),~\sum_{j=1}^{n+1} (1 + \widehat{\lambda}_j) = 1, ~ \widehat{\lambda}_j \in [\shortminus 1,1] \bigg\} \overset{\substack{\text{Def.}~\ref{def:CPZ} \\ \eqref{eq:convexHull2}}}{=}\\
		& ~ \\[-5pt]		
		& \bigg \{ \sum_{j=1}^{n+1} (1+\widehat{\lambda}_j) \bigg( c + \sum_{i = 1}^h \bigg( \prod_{k = 1}^p \alpha_{(j-1)p + k}^{E_{(k,i)}} \bigg) G_{(\cdot,i)} \bigg) ~ \bigg | ~  \alpha_{(j-1)p + k},\widehat{\lambda}_j \in [\shortminus 1,1] , \\
		& ~~~\sum_{j=1}^{n+1} (1 + \widehat{\lambda}_j) = 1,~ \forall j \in \{1,\dots,n+1\}:~\sum_{i=1}^q \bigg( \prod_{k=1}^p \alpha_{(j-1)p + k}^{\CE_{(k,i)}}\bigg) A_{(\cdot,i)} = b \bigg\} = \\
		& ~ \\[-5pt]
		& \bigg\{ (n+1)c + \sum_{j=1}^{n+1} \widehat{\lambda}_j \, c + \sum_{j=1}^{n+1} \sum_{i = 1}^h \bigg( \prod_{k = 1}^p \alpha_{(j-1)p + k}^{E_{(k,i)}} \bigg) G_{(\cdot,i)} \\
		& ~~ + \sum_{j=1}^{n+1} \sum_{i = 1}^h \widehat{\lambda}_j \bigg( \prod_{k = 1}^p \alpha_{(j-1)p + k}^{E_{(k,i)}} \bigg) G_{(\cdot,i)}  ~ \bigg| ~ \alpha_{(j-1)p + k},\widehat{\lambda}_j \in [\shortminus 1,1], \\
		& ~~~~\forall j \in \{1,\dots,n+1\}:~\sum_{i=1}^q \bigg( \prod_{k=1}^p \alpha_{(j-1)p + k}^{\CE_{(k,i)}}\bigg) A_{(\cdot,i)} = b,~\sum_{j=1}^{n+1} \widehat{\lambda}_j = \shortminus n \bigg \}  \\
		& ~ \\[-5pt]
		& \overset{\substack{\eqref{eq:sumIdentity},\eqref{eq:conIdentity} \\ \alpha_{ap + j} := \widehat{\lambda}_j \\ \vspace{-1pt} }}{=} \bigg \langle  a \, c,\begin{bmatrix} \overline{c} & \overline{G} & \overline{G} \end{bmatrix},\begin{bmatrix} \mathbf{0} & \overline{E} & \overline{E} \\ I_{a} & \mathbf{0} & \widehat{E} \end{bmatrix}, \begin{bmatrix} \overline{A} & \mathbf{0} \\ \mathbf{0} & \mathbf{1} \end{bmatrix}, \begin{bmatrix} \overline{b} \\ \shortminus n \end{bmatrix},\begin{bmatrix} \overline{\CE} & \mathbf{0} \\ \mathbf{0} & I_{a}  \end{bmatrix} \bigg\rangle_{CPZ},
	\end{align*}
	where we used the identities in \eqref{eq:sumIdentity} and \eqref{eq:conIdentity}. For the transformation in the last line, we substituted the scalars $\widehat{\lambda}_j$ by  additional factors $\alpha_{ap + j}$. Since $\widehat{\lambda}_j \in [\shortminus 1,1]$ and $\alpha_{ap + j} \in [\shortminus 1,1]$, the substitution does not change the set.
	
	\Comp The calculation of the linear combination $comb(\mathcal{CPZ}_1,\mathcal{CPZ}_2)$ using Prop.~\ref{prop:linComb} has complexity $\mathcal{O}(n (h_1 + h_2))$ according to \eqref{eq:compLinComb}. Moreover, the construction of the constant offset $a \, c$ requires $n$ multiplications and therefore has complexity $\Oc(n)$. Since all other operations that are required are initializations and concatenations which have constant complexity $\Oc(1)$, the overall complexity for the computation of the convex hull is
	\begin{equation*}
		\mathcal{O}\big(n (h_1 + h_2)\big) + \mathcal{O}(n) + \mathcal{O}(1) = \mathcal{O}\big(\underbrace{n (h_1 + h_2)}_{\overset{\eqref{eq:repSize}}{\leq} \mu_1 + \mu_2}\big) = \mathcal{O}(\mu_1 + \mu_2),
	\end{equation*}
	which is $\mathcal{O}(n^2)$ using \eqref{eq:complexity}. \hfill $\square$
\end{proof}
For the convex hull $conv(\mathcal{CPZ}) = conv(\mathcal{CPZ},\mathcal{CPZ})$ of a single set $\mathcal{CPZ}$, we can exploit that $\mathcal{CPZ} \cup \mathcal{CPZ} = \mathcal{CPZ}$ holds to obtain a more compact representation. Next, we consider the quadratic map:

\begin{proposition}
	(Quadratic Map) Given $\mathcal{CPZ} = \langle c,G,E,A,b,\CE \rangle_{CPZ} \subset \mathbb{R}^n$ and a discrete set of matrices $\mathcal{Q} = \{ Q_1,\dots,Q_w \}$ with $Q_{i} \in \mathbb{R}^{n \times n}, i = 1, \dots, w$, the quadratic map is
	\begin{equation*}
		sq(\mathcal{Q},\mathcal{CPZ}) = \bigg \langle \overline{c}, \begin{bmatrix} \widehat{G}_1 + \widehat{G}_2 & \overline{G}_1 & \dots & \overline{G}_{h} \end{bmatrix}, \begin{bmatrix} E & \overline{E}_1 & \dots & \overline{E}_{h} \end{bmatrix},A,b,\CE \bigg \rangle_{CPZ}
	\end{equation*}
	with
	\begin{gather*}
		\overline{c} = \begin{bmatrix} c^T Q_{1} c \\ \vdots \\ c^T Q_{w} c \end{bmatrix}, ~~ \widehat{G}_1 = \begin{bmatrix} c^T Q_{1} G \\ \vdots \\ c^T Q_{w} G \end{bmatrix}, ~~ \widehat{G}_2 = \begin{bmatrix} c^T Q_{1}^T G \\ \vdots \\ c^T Q_{w}^T G \end{bmatrix}, \\
		\overline{E}_j = E + E_{(\cdot,j)} \, \mathbf{1} , ~~ \overline{G}_j = \begin{bmatrix} G_{(\cdot,j)}^T Q_{1} G \\ \vdots \\ G_{(\cdot,j)}^T Q_{w} G \end{bmatrix}, ~ j = 1, \dots, h.
	\end{gather*}
	The \operator{compactGen} operation is applied to obtain a regular CPZ. The complexity is $\mathcal{O}(\mu^2 w) + \mathcal{O}(\mu^2 \log(\mu))$ with respect to the representation size $\mu$ and $\mathcal{O}(n^3( w + \log(n)))$ with respect to the dimension $n$.
	\label{prop:quadMap}
\end{proposition}

\begin{proof}
	The result is obtained by inserting the definition of CPZs in Def.~ \ref{def:CPZ} into the definition of the quadratic map in \eqref{eq:defQuadMap}, which yields
	\begin{align*}
			& sq(\mathcal{Q},\mathcal{CPZ}) \overset{\substack{\eqref{eq:defQuadMap} \\ \vspace{-3pt}}}{=} \big\{ x~\big|~ x_{(i)} = s^T Q_i s,~ s \in \mathcal{CPZ},~ i = 1,\dots,w \} \overset{\substack{ \text{Def.}~\ref{def:CPZ} \\ \vspace{-3pt}}}{=} \\
			& ~ \\			
			& \bigg\{ x ~ \bigg| ~ x_{(i)} = \bigg( c + \sum _{j=1}^{h} \bigg( \prod _{k=1}^{p} \alpha _k ^{E_{(k,j)}} \bigg)  G_{(\cdot,j)} \bigg)^T Q_i \bigg( c + \sum _{l=1}^{h} \bigg( \prod _{k=1}^{p} \alpha _k ^{E_{(k,l)}} \bigg) G_{(\cdot,l)} \bigg), \\
			&  ~~~~~~~~~~~~~~~~~~~~~~~~~~~~~ \sum_{j=1}^{q} \bigg( \prod_{k=1}^p \alpha_k^{\CE_{(k,j)}} \bigg) A_{(\cdot,j)} = b, ~i = 1, \dots, w,~ \alpha_k \in [\shortminus 1,1] \bigg \} = \\
			& ~ \\
			& \bigg \{ x ~ \bigg | ~ x_{(i)} = \underbrace{c^T Q_i c}_{ \overline{c}_{(i)}} + \sum_{l=1}^h \bigg( \prod_{k=1}^p \alpha_k^{E_{(k,l)}} \bigg) \underbrace{c^T Q_i G_{(\cdot,l)}}_{\widehat{G}_{1(i,l)}} + \sum_{j=1}^h \bigg( \prod_{k=1}^p \alpha_k^{E_{(k,j)}} \bigg) \underbrace{ G_{(\cdot,j)}^T Q_i c }_{\widehat{G}_{2(i,j)}} \\
			&  ~~~~~~~~~~~~~~~ + \sum_{j = 1}^{h} \sum_{l = 1}^{h} \bigg( \prod _{k=1}^{p} \underbrace{\alpha _k ^{E_{(k,j)} + E_{(k,l)}}}_{ \alpha_k^{\overline{E}_{j(k,l)}}} \bigg) \underbrace{G_{(\cdot,j)}^T Q_i G_{(\cdot,l)}}_{ \overline{G}_{j(i,l)}}, \\
			&  ~~~~~~~~~~~~~~~~~~~~~~~~~~~~~~ \sum_{j=1}^{q} \bigg( \prod_{k=1}^p \alpha_k^{\CE_{(k,j)}} \bigg) A_{(\cdot,j)} = b, ~i = 1, \dots, w,~ \alpha_k \in [\shortminus 1,1] \bigg \} = \\
			& ~~ \\
			& \bigg \langle \overline{c}, \begin{bmatrix} \widehat{G}_1 + \widehat{G}_2 & \overline{G}_1 & \dots & \overline{G}_{h} \end{bmatrix}, \begin{bmatrix} E & \overline{E}_1 & \dots & \overline{E}_{h} \end{bmatrix},A,b,\CE \bigg \rangle_{CPZ}.
	\end{align*}	
Note that only the generator matrix, but not the exponent matrix, is different for each dimension $x_{(i)}$.

\Comp The construction of the constant offset $\overline{c}$ has complexity $\mathcal{O}(w n^2)$ and the construction of the matrices $\widehat{G}_1$ and $\widehat{G}_2$ has complexity $\mathcal{O}(n^2 h w)$. Moreover, the construction of the matrices $\overline{E}_j$ has complexity $\mathcal{O}(h^2 p)$, and the construction of the matrices $\overline{G}_j$ has complexity $\mathcal{O}(n^2 h w) + \mathcal{O}(n h^2 w)$ if the results for $Q_i G$ are stored and reused. The resulting CPZ has dimension $\overline{n} = w$ and consists of $\overline{h} = h^2 + h$ generators. Consequently, subsequent application of the \operator{compactGen} operation has complexity $\mathcal{O}(p \overline{h} \log(\overline{h}) + \overline{n}\overline{h}) = \mathcal{O}(p (h^2 + h) \log(h^2 + h) + w(h^2 + h))$ according to Prop.~\ref{prop:compactGen}. The resulting overall complexity is
\begin{equation*}
\begin{split}
	& \mathcal{O}(w n^2) + \mathcal{O}(n^2 h w) + \mathcal{O}(h^2 p) + \mathcal{O}(n^2 h w) + \mathcal{O}(n h^2 w) \\
	& ~~~~~~~~~~~~~~~~~~~~~~~~~~~~~~~~ + \mathcal{O}\big(p (h^2 + h) \log(h^2 + h) + w(h^2 + h)\big) = \\
	& ~ \\
	& \mathcal{O}(\underbrace{n^2 h w}_{\overset{\eqref{eq:repSize}}{\leq} \mu^2 w}) + \mathcal{O}(\underbrace{n h^2 w}_{\overset{\eqref{eq:repSize}}{\leq} \mu^2 w}) + \mathcal{O}\big(\underbrace{p (h^2 + h)}_{\overset{\eqref{eq:repSize}}{\leq} \mu^2} \log(\underbrace{h^2 + h}_{\overset{\eqref{eq:repSize}}{\leq} \mu^2 }) + \underbrace{ w(h^2 + h)}_{\overset{\eqref{eq:repSize}}{\leq} \mu^2 w}\big) =  \mathcal{O}(\mu^2 w) + \mathcal{O}(\mu^2 \log(\mu)),
\end{split}
\end{equation*}
which is $\mathcal{O}(n^3( w + \log(n)))$ using \eqref{eq:complexity}. \hfill $\square$
\end{proof}
The extension to cubic or higher-order maps of sets as well as the extension to mixed quadratic maps involving two different CPZs are straightforward and therefore omitted. We continue with the intersection:

\begin{proposition}
	(Intersection) Given $\mathcal{CPZ}_1 = \langle c_1,G_1, E_1, A_1, b_1, \CE_1 \rangle_{CPZ} \subset \Rn$ and $\mathcal{CPZ}_2 = \langle c_2, G_2, E_2,A_2, b_2, \CE_2 \rangle_{CPZ} \subset \Rn$, their intersection is
	\begin{equation*}
			\mathcal{CPZ}_1 \cap \mathcal{CPZ}_2 = \bigg \langle c_1, G_1, \begin{bmatrix} E_1 \\ \mathbf{0} \end{bmatrix}, \begin{bmatrix} A_1 & \mathbf{0} & \mathbf{0} & \mathbf{0} \\ \mathbf{0} & A_2 & \mathbf{0} & \mathbf{0}  \\ \mathbf{0} & \mathbf{0} & G_1 & \shortminus G_2 \end{bmatrix}, \begin{bmatrix} b_1 \\ b_2 \\ c_2 - c_1 \end{bmatrix}, \begin{bmatrix} \CE_1 & \mathbf{0} & E_1 & \mathbf{0} \\ \mathbf{0} & \CE_2 & \mathbf{0} & E_2 \end{bmatrix}  \bigg \rangle_{CPZ},
	\end{equation*}	
	which has complexity $\mathcal{O}\left((\mu_1 + \mu_2)^2 \log (\mu_1 + \mu_2)\right)$ with respect to the representation sizes $\mu_1$ and $\mu_2$ and complexity $\mathcal{O}(n^2 \log(n))$ with respect to the dimension $n$. The \operator{compactCon} operation is applied to obtain a regular CPZ.
	\label{prop:intersection}
\end{proposition}

\begin{proof}
	The outline of the proof is inspired by \cite[Prop. 1]{Scott2016}. We compute the intersection by restricting the factors $\alpha_k$ of $\mathcal{CPZ}_1$ to values that belong to points that are located inside $\mathcal{CPZ}_2$, which is identical to adding the equality constraint
	\begin{equation*}
		\underbrace{c_1 + \sum_{i=1}^{h_1} \bigg( \prod_{k=1}^{p_1} \alpha_k^{E_{1(k,i)}} \bigg) G_{1(\cdot,i)}}_{x \, \in \, \mathcal{CPZ}_1} = \underbrace{c_2 + \sum_{i=1}^{h_2} \bigg( \prod_{k=1}^{p_2} \alpha_{p_1+k}^{E_{2(k,i)}} \bigg) G_{2(\cdot,i)}}_{x \, \in \, \mathcal{CPZ}_2}
	\end{equation*}
	to $\mathcal{CPZ}_1$:
	\begin{align*}
		& \mathcal{CPZ}_1 \cap \mathcal{CPZ}_2 = \bigg \{ c_1 + \sum_{i=1}^{h_1} \bigg( \prod_{k=1}^{p_1} \alpha_k^{E_{1(k,i)}} \bigg) G_{1(\cdot,i)} ~ \bigg | ~ \sum_{i=1}^{q_1} \bigg( \prod_{k=1}^{p_1} \alpha_k^{\CE_{1(k,i)}} \bigg) A_{1(\cdot,i)} = b_1, \\
		&   ~~~~~~~~~~~~~~~~~~~~~~~~~ \sum_{i=1}^{h_1} \bigg( \prod_{k=1}^{p_1} \alpha_k^{E_{1(k,i)}} \bigg) G_{1(\cdot,i)} - \sum_{i=1}^{h_2} \bigg( \prod_{k=1}^{p_2} \alpha_{p_1+k}^{E_{2(k,i)}} \bigg) G_{2(\cdot,i)} = c_2 - c_1, \\
		& ~~~~~~~~~~~~~~~~~~~~~~~~~~~~~~~~~~~~~~~~~~ \sum_{i=1}^{q_2} \bigg( \prod_{k=1}^{p_2} \alpha_{p_1+k}^{\CE_{2(k,i)}} \bigg) A_{2(\cdot,i)} = b_2,~\alpha_k,\alpha_{p_1+k} \in [\shortminus 1,1]   \bigg \} \\
		& ~ \\
		& \overset{\substack{\eqref{eq:conIdentity} \\ \vspace{-3pt}} }{=} \bigg \langle c_1, G_1, \begin{bmatrix} E_1 \\ \mathbf{0} \end{bmatrix}, \begin{bmatrix} A_1 & \mathbf{0} & \mathbf{0} & \mathbf{0} \\ \mathbf{0} & A_2 & \mathbf{0} & \mathbf{0}  \\ \mathbf{0} & \mathbf{0} & G_1 & \shortminus G_2 \end{bmatrix}, \begin{bmatrix} b_1 \\ b_2 \\ c_2 - c_1 \end{bmatrix}, \begin{bmatrix} \CE_1 & \mathbf{0} & E_1 & \mathbf{0} \\ \mathbf{0} & \CE_2 & \mathbf{0} & E_2 \end{bmatrix}  \bigg \rangle_{CPZ},
	\end{align*}
	where we used the identity in \eqref{eq:conIdentity}.

\Comp Computation of $c_2-c_1$ has complexity $\mathcal{O}(n)$. The resulting CPZ has $p = p_1 + p_2$ factors, $q = q_1 + q_2 + h_1 + h_2$ constraint generators, and $m = m_1 + m_2 + n$ constraints. Since the subsequent application of the \operator{compactCon} operation has complexity $\mathcal{O}(p q \log (q) + m q)$ according to Prop.~\ref{prop:compactCon}, we therefore obtain an overall complexity of 
\begin{equation*}
\begin{split}
	\mathcal{O}(n) &+ \mathcal{O}\big((\underbrace{p_1 + p_2}_{\overset{\eqref{eq:repSize}}{\leq} \mu_1 + \mu_2})(\underbrace{q_1 + q_2 + h_1 + h_2}_{\overset{\eqref{eq:repSize}}{\leq} \mu_1 + \mu_2}) \log (\underbrace{q_1 + q_2 + h_1 + h_2}_{\overset{\eqref{eq:repSize}}{\leq} \mu_1 + \mu_2})\big) \\
	& + \mathcal{O}\big( (\underbrace{m_1 + m_2 + n}_{\overset{\eqref{eq:repSize}}{\leq} \mu_1 + \mu_2})(\underbrace{q_1 + q_2 + h_1 + h_2}_{\overset{\eqref{eq:repSize}}{\leq} \mu_1 + \mu_2})\big) = \mathcal{O}\big((\mu_1 + \mu_2)^2 \log (\mu_1 + \mu_2)\big),
\end{split}
\end{equation*}
which is $\mathcal{O}(n^2 \log(n))$ using \eqref{eq:complexity}. \hfill $\square$
\end{proof}
As a last operation, we consider the union:

\begin{theorem}
	(Union) Given $\mathcal{CPZ}_1 = \langle c_1, G_1, E_1, A_1,b_1 \CE_1 \rangle_{CPZ} \subset \Rn$ and $\mathcal{CPZ}_2 = \langle c_2, G_2, \linebreak[3] E_2, A_2,b_2, \CE_2 \rangle_{CPZ} \subset \Rn$, their union is
	\begin{equation*}
		\begin{split}
			& \mathcal{CPZ}_1 \cup \mathcal{CPZ}_2 = \bigg \langle \underbrace{0.5(c_1 + c_2)}_{c}, \underbrace{\begin{bmatrix} 0.5(c_1 - c_2) & G_1 & G_2 \end{bmatrix}}_{G}, \underbrace{\begin{bmatrix} 1 & \mathbf{0} & \mathbf{0} \\ 0 & \mathbf{0} & \mathbf{0} \\ \mathbf{0} & E_1 & \mathbf{0} \\ \mathbf{0} & \mathbf{0} & E_2 \end{bmatrix}}_{E}, \\
			& ~~~~~~~~~~~~~~~~~~~~~~~~~ \underbrace{\begin{bmatrix} \widehat{A} & \mathbf{0} & \mathbf{0} & \mathbf{0} & 0 \\ \mathbf{0} & \overline{A} & \mathbf{0} & \mathbf{0} & 0 \\ \mathbf{0} & \mathbf{0} & A_1 & \mathbf{0} & \shortminus 0.5 \, b_1 \\ \mathbf{0} & \mathbf{0} & \mathbf{0} & A_2 & 0.5 \, b_2 \end{bmatrix}}_{A}, \underbrace{\begin{bmatrix} \widehat{b} \\ \overline{b} \\ 0.5 \, b_1 \\ 0.5 \, b_2 \end{bmatrix}}_{b}, \underbrace{\begin{bmatrix} \widehat{\CE} & \overline{\CE} & \begin{bmatrix} \mathbf{0} & \mathbf{0} & 1 \\ \mathbf{0} & \mathbf{0} & 0 \\ \CE_1 & \mathbf{0} & \mathbf{0} \\ \mathbf{0} & \CE_2 & \mathbf{0} \end{bmatrix}  \end{bmatrix}}_{\CE} \bigg \rangle_{CPZ}
		\end{split}
	\end{equation*}
	with
	\setlength{\jot}{12pt}
	\begin{gather*}
		\widehat{A} = 1, ~~ \widehat{b} = 1, ~~ \widehat{R} = \begin{bmatrix} 1 & 1 & \mathbf{0} \end{bmatrix}^T,\\
		\overline{A} = \begin{bmatrix} 1 & \shortminus 1 & \frac{1}{2 p_1} \mathbf{1} & \shortminus \frac{1}{2 p_1} \mathbf{1} & \shortminus \frac{1}{2 p_2} \mathbf{1} & \shortminus \frac{1}{2 p_2} \mathbf{1} & \shortminus \frac{1}{4 p_1 p_2} \mathbf{1} & \frac{1}{4 p_1 p_2} \mathbf{1} \end{bmatrix},~~ \overline{b} = 0, \\
		\overline{\CE} = \begin{bmatrix} \begin{bmatrix} 1 & 0 & \mathbf{0} & \mathbf{1} & \mathbf{0} & \mathbf{1}  \\
		0 & 1 & \mathbf{0} & \mathbf{0} & \mathbf{0} & \mathbf{0} \\
		\mathbf{0} & \mathbf{0} & 2I_{p_1} & 2I_{p_1} & \mathbf{0} & \mathbf{0} \\
		\mathbf{0} & \mathbf{0} & \mathbf{0} & \mathbf{0} & 2I_{p_2} & 2I_{p_2} \end{bmatrix} & \begin{bmatrix} \mathbf{0} \\ \mathbf{0} \\ H \end{bmatrix} & \begin{bmatrix} \mathbf{1} \\ \mathbf{0} \\ H \end{bmatrix} \end{bmatrix}, ~~ H = \begin{bmatrix} \begin{bmatrix} 2 & \dots & 2 \end{bmatrix} & & \mathbf{0} \\ & \ddots & \\ \mathbf{0} & & \begin{bmatrix} 2 & \dots & 2 \end{bmatrix} \\
		2 I_{p_2} & \dots & 2I_{p_2} \end{bmatrix},
	\end{gather*}
	which has complexity $\mathcal{O}\left( (\mu_1 + \mu_2)\mu_1 \mu_2 \log(\mu_1 \mu_2) \right)$ with respect to the representation sizes $\mu_1$ and $\mu_2$ and $\mathcal{O}(n^3 \log(n))$ with respect to the dimension $n$. The \operator{compactCon} operation is applied to obtain a regular CPZ.
	\label{theo:union}
\end{theorem}

\begin{proof}	
The proof is provided in Appendix~\ref{app:proofUnion}.

\Comp We first consider the assembly of the resulting CPZ. The computation of the vectors $0.5(c_1 + c_2)$ and $0.5(c_1 - c_2)$ requires $n$ additions, $n$ subtractions, and $2n$ multiplications. Moreover, computation of $\shortminus 0.5\, b_1$, $0.5\, b_1$ and $0.5 \, b_2$ requires $2m_1 + m_2$ multiplications. Computation of the matrix $\overline{A}$ requires $3$ multiplications and $2$ divisions. Since the construction of the remaining matrices only involves concatenations, the resulting complexity for the construction of the CPZ is 
\begin{equation}
	\mathcal{O}(4n + 2m_1 + m_2 + 5) = \mathcal{O}(\underbrace{n + m_1 + m_2}_{\overset{\eqref{eq:repSize}}{\leq} \mu_1 + \mu_2}) = \mathcal{O}(\mu_1 + \mu_2). 
	\label{eq:compUnion1}
\end{equation} 
Next, we consider the subsequent application of the \operator{compactCon} operation. The constraint generator matrix $A$ for the resulting CPZ has $q = 1 + \widehat{q} + \overline{q} + q_1 + q_2 = 4 + 2p_1 + 2p_2 + 2 p_1 p_2 + q_1 + q_2$ columns since $\widehat{A}$ has one column ($\widehat{q} = 1$), $\overline{A}$ has $\overline{q} = 2 + 2p_1 + 2p_2 + 2p_1 p_2$ columns, $A_1$ has $q_1$ columns, and $A_2$ has $q_2$ columns. Moreover, the matrix $A$ has $m = \widehat{m} + \overline{m} + m_1 + m_2 = 2 + m_1 + m_2$ rows since $\widehat{A}$ has one row ($\widehat{m} = 1$), $\overline{A}$ has one row ($\overline{m} = 1$), $A_1$ has $m_1$ rows, and $A_2$ has $m_2$ rows. The number of factors of the resulting CPZ is $p = p_1 + p_2 + 2$. Since the complexity of the \operator{compactCon} operation is $\mathcal{O}(pq\log(q) + m q)$ according to Prop.~\ref{prop:compactCon}, subsequent application of \operator{compactCon} has complexity
	\begin{equation}
	\begin{split}
		& \mathcal{O}\big((p_1 + p_2 + 2)(4 + 2p_1 + 2p_2 + 2 p_1 p_2 + q_1 + q_2)\log(4 + 2p_1 + 2p_2 + 2 p_1 p_2 + q_1 + q_2) \big) \\
		& ~~~~~~~~~~~~~~~~~~~~~~~~~~~~~~~~~~~~~~~~~~~ + \mathcal{O} \big((2 + m_1 + m_2) (2 + 2p_1 + 2p_2 + 2 p_1 p_2 + q_1 + q_2)\big)   \\
		& ~ \\
		& = \mathcal{O}\big((\underbrace{p_1 + p_2}_{\overset{\eqref{eq:repSize}}{\leq} \mu_1 + \mu_2})(\underbrace{p_1 + p_2 + p_1 p_2 + q_1 + q_2}_{\overset{\eqref{eq:repSize}}{\leq} \mu_1 \mu_2})\log(\underbrace{p_1 + p_2 + p_1 p_2 + q_1 + q_2}_{\overset{\eqref{eq:repSize}}{\leq} \mu_1 \mu_2})\big) \\
		& ~~~~~~~~~~~~~~~~~ + \mathcal{O}\big( (\underbrace{m_1 + m_2}_{\overset{\eqref{eq:repSize}}{\leq} \mu_1 + \mu_2}) (\underbrace{p_1 + p_2 + p_1 p_2 + q_1 + q_2}_{\overset{\eqref{eq:repSize}}{\leq} \mu_1 \mu_2})\big) = \mathcal{O}\big( (\mu_1 + \mu_2)\mu_1 \mu_2 \log(\mu_1 \mu_2) \big).
	\end{split} 
	\label{eq:compUnion2}
	\end{equation} 
	Combining \eqref{eq:compUnion1} and \eqref{eq:compUnion2} yields
	\begin{equation*}
		\mathcal{O}(\mu_1 + \mu_2) + \mathcal{O}\big( (\mu_1 + \mu_2)\mu_1 \mu_2 \log(\mu_1 \mu_2) \big) = \mathcal{O}\big( (\mu_1 + \mu_2)\mu_1 \mu_2 \log(\mu_1 \mu_2) \big)
	\end{equation*}
	for the overall complexity with respect to the representation sizes $\mu_1$ and $\mu_2$. Using \eqref{eq:complexity} it furthermore holds that the overall complexity resulting from the combination of \eqref{eq:compUnion1} and \eqref{eq:compUnion2} is identical to $\mathcal{O}(n^3 \log(n))$. \hfill $\square$
\end{proof}

\begin{table*}
\begin{center}
\caption{Growth of the number of generators $h$, the number of factors $p$, the number of constraints $m$, and the number of constraint generators $q$ for basic set operations on $n$-dimensional CPZs.}
\label{tab:repSize}
\begin{tabular}{ p{3cm} C{2.15cm} C{2.15cm} C{2.15cm} C{2.15cm}}
 \toprule
 \textbf{Set Operation} & \textbf{Generators} & \textbf{Factors} & \textbf{Constraints} & \textbf{Constraint Generators} 
 \\ \midrule
 Linear map  & \vspace{4pt} $h$ & \vspace{4pt} $p$ & \vspace{4pt} $m$ & \vspace{4pt} $q$ \\[9pt]
 Minkowski sum & $h_1 + h_2$ & $p_1 + p_2$ & $m_1 + m_2$ & $q_1 + q_2$ \\[9pt]
 Cartesian product & $h_1 + h_2$ & $p_1 + p_2$ & $m_1 + m_2$ & $q_1 + q_2$ \\[9pt]
 Linear combination & $2h_1 + 2h_2 + 1$ & $p_1 + p_2 + 1$ & $m_1 + m_2$ & $q_1 + q_2$ \\[5pt]
 Convex hull & $(n+1)(4h_1 + 4h_2 + 3)$ & $(n+1)(p_1 + p_2 + 2)$ & $(n+1)(m_1+m_2)+1$ & $(n+1)(q_1 + q_2 + 1)$ \\[9pt]
 Quadratic map & $h^2 + h$ & $p$ & $m$ & $q$ \\[9pt]
 Intersection & $h_1$ & $p_1 + p_2$ & $m_1 + m_2 + n$ & $q_1 + q_2 + h_1 + h_2$ \\[5pt]
 Union & $h_1 + h_2 + 1$ & $p_1+p_2+2$ & $m_1 + m_2 + 2$ & $q_1 + q_2 + 2(p_1 + p_2 + p_1 p_2)+4$ \\
 \bottomrule 
\end{tabular}
\end{center}
\end{table*}


\section{Representation Size Reduction}
\label{sec:complexityReduction}

As shown in Table~\ref{tab:repSize}, many operations on CPZs significantly increase the number of factors, generators, constraints, and constraint generators, and consequently also the representation size. For computational reasons, an efficient strategy for representation size reduction is therefore crucial when computing with CPZs. Thus, we now introduce the operations \operator{reduce} and \operator{reduceCon} for reducing the number of generators and the number of constraints of a CPZ. For both operations the tightness of the result can be improved by applying rescaling as described in Sec.~\ref{sec:rescaling} in advance. 

\subsection{Order Reduction}

Our method for reducing the number of generators is inspired by order reduction for constrained zonotopes \cite[Sec.~4.3]{Scott2016} and applies order reduction for polynomial zonotopes: 


\begin{proposition}
	(Order Reduction) Given $\mathcal{CPZ} = \langle c,G,E,A,b,\CE \rangle_{CPZ} \subset \Rn$ and a desired order $\rho_d \geq 2 \frac{n+m}{n}$, the operation \operator{reduce} returns a CPZ with an order smaller than or equal to $\rho_d$ that encloses $\mathcal{CPZ}$:
	\begin{equation*}
	\begin{split}
		\mathcal{CPZ} \subseteq \operator{reduce}(\mathcal{CPZ},\rho_d) = \big \langle & \overline{c},\overline{G}_{(\cdot,\mathcal{H})}, \overline{E}_{(\cdot,\mathcal{H})}, \overline{A}_{(\cdot,\mathcal{K})},\shortminus \overline{b}, \overline{E}_{(\cdot,\mathcal{K})} \big \rangle_{CPZ},
	\end{split}
	\end{equation*}
	with
	\begin{equation*}
	\setlength{\jot}{12pt} 
	\begin{split}
& \mathcal{PZ}^+ = \bigg\langle \begin{bmatrix} c \\ \shortminus b \end{bmatrix},\begin{bmatrix} G & \mathbf{0} \\ \mathbf{0} & A \end{bmatrix},\begin{bmatrix} E & \CE \end{bmatrix} \bigg\rangle_{PZ}, ~~ \rho_d^+ = \frac{\rho_d\, n}{2(n+m)}, \\
		&~~~ \bigg\langle \begin{bmatrix} \overline{c} \\ \overline{b} \end{bmatrix},\begin{bmatrix} \overline{G} \\ \overline{A} \end{bmatrix},\overline{E} \bigg\rangle_{PZ} = \operator{reduce}\big( \operator{compact}(\mathcal{PZ}^+), \rho_d^+ \big),
	\end{split}
	\end{equation*}	
	where the sets $\mathcal{H}$ and $\mathcal{K}$ defined as 
	\begin{equation}
		\mathcal{H} = \big\{i~\big|~ \exists j \in \{1,\dots,n\},~\overline{G}_{(j,i)} \neq 0 \big\}, ~~  \mathcal{K} = \big \{i~\big |~ \exists j \in \{1,\dots,m \},~\overline{A}_{(j,i)} \neq 0 \big \}
		\label{eq:reduceCPZ0}
	\end{equation}	
	store the indices of non-zero generators. The \operator{compact} operation as defined in \cite[Prop.~2]{Kochdumper2019} returns a regular polynomial zonotope and the \operator{reduce} operation for polynomial zonotopes as defined in \cite[Prop.~16]{Kochdumper2019} reduces the order to $\rho_d^+$.  The resulting CPZ is regular and the complexity is $\Oc(\mu^2) + \Oc(\operator{reduce})$ with respect to the representation size $\mu$ and $\Oc(n^2) + \Oc(\operator{reduce})$ with respect to the dimension $n$, where $\Oc(\operator{reduce})$ is the complexity of order reduction for zonotopes.
	\label{prop:reduceCPZ}
\end{proposition}
\begin{proof}
	To calculate a reduced-order CPZ, we reduce the order of the corresponding lifted polynomial zonotope as defined in Lemma~\ref{lemma:liftCPZ} using the \operator{reduce} operation for polynomial zonotopes in \cite[Prop.~16]{Kochdumper2019}. Back-transformation of the lifted polynomial zonotope to the original state space yields an over-approximative CPZ, which can be proven using Lemma~\ref{lemma:liftCPZ}:
	\begin{equation*}
	\setlength{\jot}{8pt}
	\begin{split}
		\forall x \in \Rn,~~  &(x \in \mathcal{CPZ}) \overset{\substack{\text{Lemma}~\ref{lemma:liftCPZ}\\ \vspace{-2pt} }}{\Rightarrow} \bigg( \begin{bmatrix} x \\ \mathbf{0}  \end{bmatrix} \in \mathcal{PZ}^+ \bigg) \overset{\substack{\mathcal{PZ}^+ \subseteq \, \operator{reduce}(\mathcal{PZ}^+,\rho_d^+)\\ \vspace{-2pt}}}{\Rightarrow} \\
		 & \bigg( \begin{bmatrix} x \\ \mathbf{0}  \end{bmatrix} \in \operator{reduce}(\mathcal{PZ}^+,\rho_d^+) \bigg) \overset{\substack{\text{Lemma}~\ref{lemma:liftCPZ}\\ \vspace{-2pt}}}{\Rightarrow} \big( x \in \operator{reduce}(\mathcal{CPZ},\rho_d) \big),
	\end{split}
	\end{equation*}
	where we omitted the \operator{compact} operation since it only changes the representation of the set, but not the set itself. It remains to show that the order of the resulting CPZ is smaller than or equal to the desired order $\rho_d$. According to \cite[Prop.~16]{Kochdumper2019}, we have 
	\begin{equation}
		\frac{\overline{h}}{n^+} = \frac{\overline{h}}{n + m} \leq \rho_d^+ = \frac{\rho_d \, n}{2(n+m)},
		\label{eq:reduceCPZ1}
	\end{equation}
	where $\overline{h}$ denotes the number of columns of the matrix $\overline{G}$ and $n^+ = n + m$ is the dimension of the lifted polynomial zonotope $\mathcal{PZ}^+$. Solving \eqref{eq:reduceCPZ1} for $\rho_d$ yields $2 \, \overline{h} / n \leq \rho_d$ so that
	\begin{equation*}
		\rho = \frac{|\mathcal{H}| + |\mathcal{K}|}{n} \overset{\substack{\eqref{eq:reduceCPZ0} \\ \vspace{-2pt}}}{\leq} 2 \, \frac{\overline{h}}{n} \leq \rho_d
	\end{equation*}
	holds since the number of elements in the sets $\mathcal{H}$ and $\mathcal{K}$ is at most $\overline{h}$ according to \eqref{eq:reduceCPZ0}.
	
	\Comp Let $n^+ = n+m$, $p^+ = p$, and $h^+ = h + q$ denote the dimension, the number of factors, and the number of generators of the lifted polynomial zonotope $\mathcal{PZ}^+$. According to \cite[Prop.~2]{Kochdumper2019}, the \operator{compact} operation has complexity $\Oc(p^+ h^+ \log(h^+)) = \Oc(p+(h+q)\log(h+q))$ and the complexity of order reduction of a polynomial zonotope using \cite[Prop.~16]{Kochdumper2019} is $\Oc(h^+ (n^+ + p^+ + \log(h^+))) + \Oc(\operator{reduce}) = \Oc((h+q)(n+m+p+\log(h+q))) + \Oc(\operator{reduce})$, where $\Oc(\operator{reduce})$ denotes the complexity of order reduction for zonotopes, which depends on the method that is used. Moreover, construction of the sets $\mathcal{H}$ and $\mathcal{K}$ has complexity $\Oc((h+q)(n+m))$ in the worst case. The overall computational complexity is therefore 
	\begin{equation*}
	\setlength{\jot}{8pt}
	\begin{split}
		& \Oc\big(\underbrace{p+(h+q)\log(h+q)}_{\overset{\eqref{eq:repSize}}{\leq} \mu \log(\mu)}\big) + \Oc\big(\underbrace{(h+q)(n+m+p+\log(h+q))}_{\overset{\eqref{eq:repSize}}{\leq} \mu^2 + \mu \log(\mu)}\big) + \Oc(\operator{reduce})  \\
		& = \Oc\big(\mu \log(\mu)\big) + \Oc\big(\mu^2 + \mu \log(\mu)\big) + \Oc(\operator{reduce}) = \Oc(\mu^2) + \Oc(\operator{reduce}),
	\end{split}
	\end{equation*}
	which is $\Oc(n^2) + \Oc(\operator{reduce})$ using \eqref{eq:complexity}.
\end{proof}
Let us demonstrate the tightness of our order reduction method for CPZs by an example:

\begin{figure}
  \centering
  \psfragfig[width=0.95\columnwidth]{./Figures/reduce}{
  \psfrag{a}[c][c]{$\alpha_1$}
  \psfrag{b}[c][c]{$\alpha_2$}
  \psfrag{c}[c][c]{$x_1$}
  \psfrag{d}[c][c]{\rotatebox[origin=c]{180}{$x_2$}}
  \psfrag{e}[c][c]{\rotatebox[origin=c]{180}{$\alpha_3$}}
  }
  \caption{Visualization of order reduction using Prop.~\ref{prop:reduceCPZ} for the CPZ from Example~\ref{ex:reduceCPZ}. The original set $\mathcal{CPZ}$ (right) and the corresponding constraint (left) are depicted in red, and the reduced order CPZ $\operator{reduce}(\mathcal{CPZ},6)$ (right) is depicted in blue.}
  \label{fig:reduceCPZ}
\end{figure}

\begin{example}
	We consider the CPZ
	\begin{equation*}
	\begin{split}
		\mathcal{CPZ} = \bigg \langle & \begin{bmatrix} \shortminus 2 \\ \shortminus 2 \end{bmatrix}, \begin{bmatrix} 2.5 & 0 & ~2~ & 0.05 & 0.02 & \shortminus 0.03 & 0 \\ 0 & \shortminus 4 & 3 & 0.02 & \shortminus 0.01 & 0 & 0.02 \end{bmatrix}, \begin{bmatrix} 1 & 0 & 2 & 1 & 1 & 1 & 0 \\ 0 & 1 & 0 & 0 & 0 & 0 & 0 \\ 3 & 0 & 0 & 1 & 0 & 2 & 3 \end{bmatrix}, \\
		& ~~~~~~~~~~~~~~~~~~~ \begin{bmatrix} 1 & 5 & 1 & 0.1 & \shortminus 0.2 & 0.2 & 0.05 \end{bmatrix}, 0, \begin{bmatrix} 1 & 0 & 2 & 1 & 1 & 1 & 0 \\ 0 & 1 & 0 & 0 & 0 & 0 & 0 \\ 3 & 0 & 0 & 1 & 0 & 2 & 3 \end{bmatrix} \bigg \rangle_{CPZ},
	\end{split}
	\end{equation*}
	which has order $\rho = 7$. The resulting CPZ after order reduction to the desired order $\rho_d = 6$ using Prop.~\ref{prop:reduceCPZ} is visualized in Fig.~\ref{fig:reduceCPZ}, where we used principal component analysis for order reduction of zonotopes \cite[Sec.~III.A]{Kopetzki2017}, which is required for order reduction of polynomial zonotopes using \cite[Prop.~16]{Kochdumper2019}.
	\label{ex:reduceCPZ}
\end{example}

\subsection{Constraint Reduction}

Next, we present an approach for reducing the number of constraints of a CPZ, which is inspired by constraint reduction for constrained zonotopes \cite[Sec.~4.2]{Scott2016}:

\begin{proposition}
	(Constraint Reduction) Given $\mathcal{CPZ} = \langle c,G,E,A,b,\CE \rangle_{CPZ}\linebreak[3] \subset \R^n$, the index of one constraint $r \in \{1,\dots,m \}$, and indices $d,s \in \mathbb{N}$ satisfying
	\begin{equation}
		\forall i \in \{1,\dots,p\},~ E_{(i,d)} = \CE_{(i,s)} ~~ \text{and} ~~ A_{(r,s)} \neq 0 ,
		\label{eq:reduceConCondCPZ}
	\end{equation}
	the operation \operator{reduceCon} removes the constraint with index $r$ and returns a CPZ that encloses $\mathcal{CPZ}$:
	\begin{equation*}
		\mathcal{CPZ} \subseteq \operator{reduceCon}(\mathcal{CPZ},r,d,s) = \big \langle \overline{c},\overline{G},\overline{E}_{(\mathcal{N},\cdot)},\overline{A},\overline{b},\overline{\CE}_{(\mathcal{N},\cdot)} \big \rangle_{CPZ},
	\end{equation*}
	where
	\begin{equation*}
	\setlength{\jot}{6pt}
		\begin{split}
		 & ~~ \overline{G} = \begin{bmatrix} G_{(\cdot,\{1,\dots,d-1\})} & \shortminus \frac{1}{A_{(r,s)}} A_{(r,\mathcal{H})} G_{(\cdot,d)} & G_{(\cdot,\{d+1,\dots,h\})} \end{bmatrix}, ~~ \overline{\CE} = \CE_{(\cdot,\mathcal{H})}, \\
		 & \overline{E} = \begin{bmatrix} E_{(\cdot,\{1,\dots,d-1\})} & \overline{\CE} & E_{(\cdot,\{d+1,\dots,h\})} \end{bmatrix}, ~~ \overline{A} = A_{(\mathcal{K},\mathcal{H})}- \frac{1}{A_{(r,s)}} A_{(r,\mathcal{H})} A_{(\mathcal{K},s)}, \\
		 & ~~~~~~~~~~~~~~ \overline{c} = c + \frac{b_{(r)}}{A_{(r,s)}} G_{(\cdot,d)}, ~~ \overline{b} = b_{(\mathcal{K})} - \frac{b_{(r)}}{A_{(r,s)}} A_{(\mathcal{K},s)},
		\end{split}
	\end{equation*}
	and the sets $\mathcal{H}$, $\mathcal{K}$, and $\mathcal{N}$ are defined as
	\begin{equation}
	\setlength{\jot}{6pt}
	\begin{split}
		& \mathcal{H} = \{1,\dots,q \} \setminus s, ~~ \mathcal{K} = \{1,\dots,m \} \setminus r, \\
		& \mathcal{N} = \big \{ i ~ \big| ~ \exists j,k,~ \overline{E}_{(i,j)} \neq 0  \vee \overline{\CE}_{(i,k)} \neq 0 \big \}.
	\end{split}
		\label{eq:conRedCPZ0}
	\end{equation}
	The \operator{compactGen} operation is applied to make the resulting CPZ regular and the complexity is $\Oc(\mu^2)$ with respect to the representation size $\mu$ and $\Oc(n^2 \log(n))$ with respect to the dimension $n$. 
	\label{prop:reduceConCPZ}
\end{proposition}

\begin{proof}
	To remove the constraint with index $r$, we solve the corresponding equation for the term that is multiplied with the constraint generator with index $s$:
	\begin{equation}
		\prod_{k=1}^p \alpha_k^{\CE_{(k,s)}} = \frac{1}{A_{(r,s)}} \bigg( - \sum_{i \in \mathcal{H}} \bigg( \prod_{k=1}^p \alpha_k^{\CE_{(k,i)}} \bigg) A_{(r,i)} + b_{(r)} \bigg).
		\label{eq:conRedCPZ1}
	\end{equation}
	For the constraint with index $r$, we obtain with the substitution from \eqref{eq:conRedCPZ1}
	\begin{equation}
		\sum_{i \in \mathcal{H}} \bigg( \prod_{k=1}^p \alpha_k^{\CE_{(k,i)}} \bigg) \underbrace{\bigg( A_{(r,i)} - \frac{1}{A_{(r,s)}} A_{(r,i)} A_{(r,s)} \bigg)}_{= 0} = \underbrace{b_{(r)} - \frac{b_{(r)}}{A_{(r,s)}} A_{(r,s)}}_{=0}
		\label{eq:conRedCPZ5}
	\end{equation}	
	the trivial constraint $0 = 0$, which can be removed by restricting the indices of the constraints to the set $\mathcal{K}$ as defined in \eqref{eq:conRedCPZ0}. Finally, inserting the substitution in \eqref{eq:conRedCPZ1} into the definition of a CPZ in Def.~\ref{def:CPZ} yields
	\begin{align*}
		& \mathcal{CPZ} = \bigg \{ c + \sum_{i=1}^{h} \bigg( \prod_{k=1}^p \alpha_k^{E_{(k,i)}} \bigg) G_{(\cdot,i)} ~ \bigg | ~ \sum_{i=1}^{q} \bigg( \prod_{k=1}^p \alpha_k^{\CE_{(k,i)}} \bigg) A_{(\cdot,i)} = b, ~ \alpha_k \in [\shortminus 1,1] \bigg \} \nonumber \\
		& ~ \nonumber \\
		& \overset{\eqref{eq:conRedCPZ0}}{=} \bigg\{ c + \sum_{i=1}^{d-1} \bigg( \prod_{k=1}^p \alpha_k^{E_{(k,i)}} \bigg) G_{(\cdot,i)} + \bigg(\prod_{k=1}^p \alpha_k^{\CE_{(k,s)}} \bigg) G_{(\cdot,d)} + \sum_{i=d+1}^{h} \bigg( \prod_{k=1}^p \alpha_k^{E_{(k,i)}} \bigg) G_{(\cdot,i)} ~ \bigg | \nonumber \\
		 & ~~~~~~~~~~~~~~~~~~~~~~~~~~~~ \sum_{i \in \mathcal{H}} \bigg( \prod_{k=1}^p \alpha_k^{\CE_{(k,i)}} \bigg) A_{(\cdot,i)} + \bigg(\prod_{k=1}^p \alpha_k^{\CE_{(k,s)}} \bigg) A_{(\cdot,s)} = b, ~ \alpha_k \in [\shortminus 1,1] \bigg\} \nonumber \\
		 & ~ \nonumber \\
		 & \overset{\eqref{eq:conRedCPZ1}}{\subseteq} \bigg\{ c + \sum_{i=1}^{d-1} \bigg( \prod_{k=1}^p \alpha_k^{E_{(k,i)}} \bigg) G_{(\cdot,i)} + \frac{1}{A_{(r,s)}} \bigg( - \sum_{i \in \mathcal{H}} \bigg( \prod_{k=1}^p \alpha_k^{\CE_{(k,i)}} \bigg) A_{(r,i)} + b_{(r)} \bigg) G_{(\cdot,d)}  \nonumber \\
		 & ~~~~~~~~ + \sum_{i=d+1}^{h} \bigg( \prod_{k=1}^p \alpha_k^{E_{(k,i)}} \bigg) G_{(\cdot,i)} ~ \bigg | ~ \sum_{i \in \mathcal{H}} \bigg( \prod_{k=1}^p \alpha_k^{\CE_{(k,i)}} \bigg) A_{(\cdot,i)} + \nonumber \\
		 & ~~~~~~~~~~~~~~~~~~~~~~~~~~ \frac{1}{A_{(r,s)}} \bigg( - \sum_{i \in \mathcal{H}} \bigg( \prod_{k=1}^p \alpha_k^{\CE_{(k,i)}} \bigg) A_{(r,i)} + b_{(r)} \bigg) A_{(\cdot,s)} = b, ~ \alpha_k \in [\shortminus 1,1] \bigg\} \nonumber \\
		 & ~ \nonumber \\
		 & \overset{\eqref{eq:conRedCPZ5}}{=} \bigg\{ \underbrace{c + \frac{b_{(r)}}{A_{(r,s)}} G_{(\cdot,d)}}_{\overline{c}} + \sum_{i=1}^{d-1} \bigg( \prod_{k=1}^p \alpha_k^{E_{(k,i)}} \bigg) G_{(\cdot,i)}  - \sum_{i \in \mathcal{H}} \bigg( \prod_{k=1}^p \alpha_k^{\CE_{(k,i)}} \bigg) \frac{1}{A_{(r,s)}}A_{(r,i)} G_{(\cdot,d)} \nonumber \\
		 & ~~~~~~~~~ + \sum_{i=d+1}^{h} \bigg( \prod_{k=1}^p \alpha_k^{E_{(k,i)}} \bigg) G_{(\cdot,i)} ~ \bigg | ~\alpha_k \in [\shortminus 1,1], \nonumber \\ 
		 & ~~~~~~~~~~~~~~~~~~~ \sum_{i \in \mathcal{H}} \bigg( \prod_{k=1}^p \alpha_k^{\CE_{(k,i)}} \bigg) \underbrace{ \bigg( A_{(\mathcal{K},i)} - \frac{1}{A_{(r,s)}} A_{(r,i)} A_{(\mathcal{K},s)} \bigg)}_{\overline{A}_{(\cdot,i)}} = \underbrace{b_{(\mathcal{K})} - \frac{b_{(r)}}{A_{(r,s)}} A_{(\mathcal{K},s)}}_{\overline{b}} \bigg \} \nonumber \\
		 & ~ \nonumber \\
		 & ~=  \big \langle \overline{c},\overline{G},\overline{E}_{(\mathcal{N},\cdot)},\overline{A},\overline{b},\overline{\CE}_{(\mathcal{N},\cdot)} \big \rangle_{CPZ} = \operator{reduceCon}(\mathcal{CPZ},r,d,s). \nonumber
	\end{align*}
	The set $\mathcal{N}$ as defined in \eqref{eq:conRedCPZ0} only removes all-zero rows from the exponent matrix and the constraint exponent matrix, and therefore does not change the set.
	
	\Comp The construction of the matrix $\overline{G}$ requires $nq$ multiplications, the construction of the matrix $\overline{A}$ requires $(m-1)q$ multiplications and $(m-1)(q-1)$ subtractions, and the construction of the vectors $\overline{c}$ and $\overline{b}$ requires $n+m-1$ multiplications, $n$ additions, and $m-1$ subtractions. Let $\overline{p} = p$, $\overline{h} = h+q - 2$, and $\overline{q} = q - 1$ denote the number of factors, the number of generators, and the number of constraint generators of the resulting CPZ. Subsequent application of the \operator{compactGen} operation has complexity $\Oc(\overline{p} \overline{h} \log(\overline{h}) + n \overline{h}) = \Oc(p (h+q-2) \log(h+q-2) + n(h+q-2))$ according to Prop.~\ref{prop:compactGen}, and the construction of the set $\mathcal{N}$ in \eqref{eq:conRedCPZ0} has in the worst case complexity $\Oc(\overline{p}(\overline{h}+\overline{q})) = \Oc(p(h+2q-2))$. The overall complexity is therefore 
	\begin{equation*}
		\begin{split}
		& \Oc(nq) + \Oc\big((m-1)(2q-1)\big) + \Oc(2n+2m-2) \\
		& ~~~~~~~~~~~~~ + \Oc\big((h+ q - 2)(n+p \log(h + q - 2))\big) + \Oc\big(p(h+2q-2)\big)\\
		& ~ \\
		& ~~~~~ = \Oc(\underbrace{mq}_{\overset{\eqref{eq:repSize}}{\leq}\mu}) + \Oc\big(\underbrace{(h+q)(n+p \log(h+q))}_{\overset{\eqref{eq:repSize}}{\leq}\mu^2 + \mu \log(\mu)}\big) = \Oc(\mu) + \Oc(\mu^2 + \mu \log(\mu)) = \Oc(\mu^2),
		\end{split}
	\end{equation*}
	which is $\Oc(n^2 \log(n))$ using \eqref{eq:complexity}. \hfill $\square$ 
\end{proof}
The crucial point for Prop.~\ref{prop:reduceConCPZ} is the selection of the constraint with index $r$ that is removed, as well as the selection of suitable indices $s,d$ that satisfy the conditions in \eqref{eq:reduceConCondCPZ} and can therefore be used for reduction. Clearly, we want to select the indices $r$, $s$, and $d$ such that the over-approximation resulting from constraint reduction is minimized. Since it is computationally infeasible to determine the optimal indices for reduction, we instead present some heuristics on how to choose good values for $r$, $s$, and $d$. When removing a constraint from a CPZ using Prop.~\ref{prop:reduceConCPZ}, there are two sources contributing to the resulting over-approximation:
\begin{enumerate} 
 \item Over-approximation due to lost bounds on factors.
 \item Over-approximation due to a loss of dependency.
\end{enumerate}
We now explain both sources in detail and provide illustrative examples.

\myparagraph{Over-Approximation due to Lost Bounds}

\noindent The over-approximation due to lost bounds results from the fact that due to the replacement of the term $\prod_{k=1}^p \alpha_k^{\substack{R_{(k,s)} \\ \vspace{-5pt}}}$ with the solved constraint in \eqref{eq:conRedCPZ1}, we lose the ability to enforce the bounds $\prod_{k=1}^p \alpha_k^{\substack{R_{(k,s)} \\ \vspace{-5pt}}} \in \prod_{k=1}^p [\shortminus 1,1]^{R_{(k,s)}}$, where $\prod^p_{k=1} [\shortminus 1, 1]^{R_{(k,s)}}$ denotes interval multiplication and exponentiation. Consequently, constraint reduction results in an over-approximation if the solved constraint in \eqref{eq:conRedCPZ1} has feasible values outside the domain $\prod_{k=1}^p [\shortminus 1,1]^{R_{(k,s)}}$, which is equivalent to the condition
\begin{equation}
	\bigg \{ \frac{1}{A_{(r,s)}} \bigg( - \sum_{i \in \mathcal{H}} \bigg( \prod_{k=1}^p \alpha_k^{\CE_{(k,i)}} \bigg) A_{(r,i)} + b_{(r)} \bigg)~\bigg | ~ \alpha_k \in [\shortminus 1,1] \bigg \} \nsubseteq \prod_{k=1}^p [\shortminus 1,1]^{R_{(k,s)}}.
	\label{eq:solvedConstraint}
\end{equation} 
Let us demonstrate this by an example:

\begin{figure}
  \centering
  \psfragfig[width=0.95\columnwidth]{./Figures/reduceCon}{
  \psfrag{a}[c][c]{$\alpha_1$}
  \psfrag{b}[c][c]{$\alpha_2$}
  \psfrag{c}[c][c]{$x_1$}
  \psfrag{d}[c][c]{\rotatebox[origin=c]{180}{$x_2$}}
  \psfrag{e}[c][c]{\rotatebox[origin=c]{180}{$\alpha_3$}}
  }
  \caption[Example for constraint reduction of constrained polynomial zonotopes]{Visualization of constraint reduction using Prop.~\ref{prop:reduceConCPZ} for $\mathcal{CPZ}$ from Example~\ref{ex:reduceConCPZ1} (red,~right), where the corresponding constraint is visualized on the left. While $\operator{reduceCon}(\mathcal{CPZ},1,1,1)$ (green) results in an over-approximation, $\operator{reduceCon}(\mathcal{CPZ},1,2,2)$ (blue) is exact.}
  \label{fig:reduceConCPZ1}
\end{figure}

\begin{example}
	We consider the CPZ
	\begin{equation*}
	\setlength{\jot}{12pt}
	\begin{split}
		\mathcal{CPZ} &= \bigg \langle \begin{bmatrix} 0 \\ 0 \end{bmatrix}, \begin{bmatrix} 1 & ~0 & 1.5 \\ 0 & ~1 & 2 \end{bmatrix}, \begin{bmatrix} 1 & 0 & 0 \\ 0 & 1 & 0 \\ 0 & 0 & 1 \end{bmatrix}, \begin{bmatrix} 1 & 2 & 0.5 \end{bmatrix}, 0, \begin{bmatrix} 1 & 0 & 0 \\ 0 & 1 & 0 \\ 0 & 0 & 3 \end{bmatrix} \bigg \rangle_{CPZ} \\
		& = \bigg\{ \begin{bmatrix} 1 \\ 0 \end{bmatrix} \alpha_1 + \begin{bmatrix} 0 \\ 1 \end{bmatrix} \alpha_2 + \begin{bmatrix} 1.5 \\ 2 \end{bmatrix} \alpha_3~ \bigg|~ \alpha_1 + 2 \alpha_2 + 0.5 \alpha_3^3 = 0,~ \alpha_1,\alpha_2,\alpha_3 \in [\shortminus 1,1] \bigg\},
	\end{split}
	\end{equation*}
	which is visualized in Fig.~\ref{fig:reduceConCPZ1}. We first choose the indices $s=1$ and $d=1$ that correspond to the term $\alpha_1$ for constraint reduction. In this case, solving the constraint $\alpha_1 + 2 \alpha_2 + 0.5 \alpha_3^3 = 0$ for $\alpha_1$ yields $\alpha_1  = -2 \alpha_2 - 0.5 \alpha_3^3$. As visible in Fig.~\ref{fig:reduceConCPZ1} (left), the solved constraint has feasible values outside the domain $\alpha_1 \in [\shortminus 1,1]$:
	\begin{equation*}
		\{-2 \alpha_2 - 0.5 \alpha_3^3 ~|~\alpha_2,\alpha_3 \in [\shortminus 1,1] \} = [\shortminus 2.5,2.5] \nsubseteq [\shortminus 1,1].
	\end{equation*}
	Constraint reduction using Prop.~\ref{prop:reduceConCPZ} with the indices $s=1$ and $d=1$ therefore results in an over-approximation $\operator{reduceCon}(\mathcal{CPZ},1,1,1) \supset \mathcal{CPZ}$ (see Fig.~\ref{fig:reduceConCPZ1} (right)). Next, we consider the indices $s = 2$ and $d = 2$ that correspond to the term $\alpha_2$. Solving the constraint for $\alpha_2$ yields $\alpha_2 = -0.5 \alpha_1 - 0.25\alpha_3^3$, so that the set of feasible values for the solved constraint is a subset of the domain $\alpha_2 \in [\shortminus 1,1]$:
	\begin{equation*}
		\{ -0.5 \alpha_1 - 0.25\alpha_3^3 ~|~ \alpha_1,\alpha_3 \in [\shortminus 1,1] \} = [\shortminus 0.75,0.75] \subset [\shortminus 1,1].
	\end{equation*}
	 Consequently, constraint reduction using Prop.~\ref{prop:reduceConCPZ} with the indices $s = 2$ and $d = 2$ does not result in an over-approximation, so that $\operator{reduceCon}(\mathcal{CPZ},1,2,2) = \mathcal{CPZ}$ (see Fig.~\ref{fig:reduceConCPZ1} (right)).
	\label{ex:reduceConCPZ1}
\end{example}

Computing the exact bounds for the solved constraint in \eqref{eq:solvedConstraint} is in general computationally infeasible. Instead, one can use range bounding to compute over-approximations of the bounds. Another heuristic that we observed to perform well in practice is to first enclose the CPZ with a constrained zonotope using Prop.~\ref{prop:conZonoEncloseCPZ}, and then select the constraint that is removed as well as the indices that are used for reduction based on the constrained zonotope enclosure. For constrained zonotopes, sophisticated methods for selecting the constraints and indices that result in the least over-approximation are available in \cite[Appendix]{Scott2016}. 

\begin{figure}
  \centering
  \psfragfig[width=0.95\columnwidth]{./Figures/reduceCon_}{
  \psfrag{a}[c][c]{$\alpha_1$}
  \psfrag{b}[c][c]{$\alpha_2$}
  \psfrag{c}[c][c]{$x_1$}
  \psfrag{d}[c][c]{\rotatebox[origin=c]{180}{$x_2$}}
  \psfrag{e}[c][c]{\rotatebox[origin=c]{180}{$\alpha_3$}}
  }
  \caption[Example demonstrating the loss of dependency during constraint reduction of constrained polynomial zonotopes]{Visualization of constraint reduction using Prop.~\ref{prop:reduceConCPZ} for $\mathcal{CPZ}$ from Example~\ref{ex:reduceConCPZ2} (red,~right), where the corresponding constraint is visualized on the left. Due to the loss of dependency, constraint reduction $\operator{reduceCon}(\mathcal{CPZ},1,2,2)$ (blue) results in an over-approximation.}
  \label{fig:reduceConCPZ2}
\end{figure}

\myparagraph{Over-Approximation due to Loss of Dependency}

\noindent The second source contributing to the over-approximation during constraint reduction is the loss of dependency. This loss arises since if we substitute the term $\prod_{k=1}^p \alpha_k^{\substack{R_{(k,s)} \\ \vspace{-5pt}}}$ with the solved constraint in \eqref{eq:conRedCPZ1}, the dependency between the factors $\alpha_k$ in $\prod_{k=1}^p \alpha_k^{\substack{R_{(k,s)} \\ \vspace{-4pt}}}$ and the factors $\alpha_k$ in other terms gets lost. Let us demonstrate this by an example:

\begin{example}
	We consider the CPZ
	\begin{equation*}
		~\mathcal{CPZ} = \bigg\{ \hspace{-3pt} \begin{bmatrix} 1 \\ 0 \end{bmatrix} \hspace{-2pt} \alpha_1 + \begin{bmatrix} 0 \\ 1 \end{bmatrix} \hspace{-2pt} \alpha_2 + \begin{bmatrix} 1.5 \\ 2 \end{bmatrix} \hspace{-2pt} \alpha_3 + \begin{bmatrix} 0.5 \\ \shortminus 2 \end{bmatrix} \hspace{-2pt} \alpha_2^2 \alpha_3 \, \bigg|\, \alpha_1 + 2 \alpha_2 + 0.5 \alpha_3^3 = 0,\, \alpha_1,\alpha_2,\alpha_3 \in [\shortminus 1,1] \hspace{-2pt} \bigg\},
	\end{equation*}	
	which is identical to the CPZ from Example~\ref{ex:reduceConCPZ1}, except that we added the term $\alpha_2^2 \alpha_3$. Since the polynomial constraint is identical to the constraint of the CPZ in Example~\ref{ex:reduceConCPZ1}, constraint reduction using the indices $s=2$ and $d=2$ does not result in an over-approximation due to lost bounds, as we demonstrated in Example~\ref{ex:reduceConCPZ1}. However, with the indices $s=2$ and $d=2$, we substitute $\alpha_2$ by the solved constraint $\alpha_2 = -0.5 \alpha_1 - 0.25\alpha_3^3$, so that the dependency between $\alpha_2$ in the term $\alpha_2$ for the second generator and $\alpha_2$ in the additional term $\alpha_2^2 \alpha_3$ gets lost. Due to this loss of dependency, constraint reduction using Prop.~\ref{prop:reduceConCPZ} with indices $s=2$ and $d=2$ results in an over-approximation, as visualized in Fig.~\ref{fig:reduceConCPZ2}.
	\label{ex:reduceConCPZ2} 
\end{example} 

In the above example we could prevent the loss of dependency by substituting $\alpha_2^2$ with $\alpha_2^2 = (-0.5 \alpha_1 - 0.25\alpha_3^3)^2$, which corresponds to the square of the solved constraint $\alpha_2 = -0.5 \alpha_1 - 0.25\alpha_3^3$. Similarly, it is possible to relax the condition in \eqref{eq:reduceConCondCPZ} to 
\begin{equation*}
	\forall i \in \{1,\dots,p\},~ \CE_{(i,s)}^e = E_{(i,d)}~~ \text{and} ~~ A_{(r,s)} \neq 0 ,
\end{equation*}
which allows powers of the selected term with arbitrary exponents $e \in \mathbb{N}$. While this relaxation often enables us to reduce constraints with less over-approximation, taking powers of the solved constraint significantly increases the number of generators of the resulting CPZ, and therefore also the computational complexity for the subsequent \operator{compactGen} operation.


\section{Numerical Example}

For the numerical experiments, we implemented CPZs in the MATLAB toolbox CORA \cite{Althoff2015a}, which is available at \url{https://cora.in.tum.de}. All computations are carried out on a 2.9GHz quad-core i7 processor with 32GB memory.

One often occurring task in set-based computing is to calculate the image of a given initial set under a nonlinear function. We demonstrate by a numerical example that with CPZs the image can be computed exactly if the nonlinear function is polynomial. Let us consider the nonlinear function

\begin{equation}
	\begin{split}
	& f(x) = \begin{cases} \begin{bmatrix} 0.1\,x_{(1)}^2 -1.2\,x_{(1)} x_{(2)} - 0.5\, x_{(2)}^2 \\ \shortminus x_{(1)}^2 + 2\, x_{(2)}^2 \end{bmatrix}, & 0.5 \, x_{(1)}^2 \leq x_{(2)} \\ ~ \\\begin{bmatrix} 1.2 \, x_{(1)} - x_{(2)} \\ \shortminus x_{(1)} + 0.1 \, x_{(2)} \end{bmatrix}, &  \mathrm{otherwise} \end{cases}
	\end{split}
	\label{eq:nonlinFun}
\end{equation}
and the polytope $\mathcal{P}$ with vertices $[\shortminus 1~1]^T$, $[0~\shortminus 1]^T$, and $[1~0]^T$. The task is to compute the image of $\mathcal{P}$ under the nonlinear function $f(x)$. First, we convert the polytope $\mathcal{P}$ to a CPZ according to Sec. \ref{subsec:polytope} based on a conversion to a polynomial zonotope \cite{Kochdumper2021}, which yields
\begin{equation*}
	\mathcal{P} = \bigg \{ \begin{bmatrix} \shortminus 0.25 \\ 0.25 \end{bmatrix} + \begin{bmatrix} \shortminus 0.75 \\ 0.75 \end{bmatrix} \alpha_1 + \begin{bmatrix} \shortminus 0.25 \\ \shortminus 0.25 \end{bmatrix} \alpha_2 + \begin{bmatrix} 0.25 \\ 0.25 \end{bmatrix} \alpha_1 \alpha_2 ~\bigg | ~ \alpha_1,\alpha_2 \in [\shortminus 1,1] \bigg \}.
\end{equation*}
Using the linear map in Prop.~\ref{prop:linearMap}, the quadratic map in Prop.~\ref{prop:quadMap}, the intersection in Prop.~\ref{prop:intersection}, and the union in Theorem~\ref{theo:union}, we can compute the image exactly as
\begin{equation*}
	\big \{ f(x) ~|~ x \in \mathcal{P} \big \} = sq(\mathcal{Q},\mathcal{P} \cap \mathcal{CPZ}_1) \cup \big( M \otimes (\mathcal{P} \cap \mathcal{CPZ}_2)\big),
\end{equation*}
where the parameter $M$ and $\mathcal{Q}$ for the linear and quadratic map are
\begin{equation*}
	\mathcal{Q} = \{Q_1,Q_2\}, ~~ Q_1 = \begin{bmatrix} 0.1 & \shortminus 1.2 \\ 0 & \shortminus 0.5 \end{bmatrix}, ~~ Q_2 = \begin{bmatrix} \shortminus 1 & 0 \\ 0 & 2 \end{bmatrix}, ~~ M = \begin{bmatrix} 1.2 & \shortminus 1 \\ \shortminus 1 & 0.1 \end{bmatrix},
\end{equation*}
and the CPZs
\begin{equation*}
\setlength{\jot}{12pt}
\begin{split}
	& \mathcal{CPZ}_1 = \bigg\{ \begin{bmatrix} 1 \\ 0 \end{bmatrix} \alpha_1 + \begin{bmatrix} 0 \\ 1\end{bmatrix} \alpha_2 ~\bigg |~ 0.5 \, \alpha_1^2 - \alpha_2 + \alpha_3 = \shortminus 1,~ \alpha_1,\alpha_2,\alpha_3 \in [\shortminus 1,1] \bigg\} \\ 
	& \mathcal{CPZ}_2 = \bigg\{ \begin{bmatrix} 1 \\ 0 \end{bmatrix} \alpha_1 + \begin{bmatrix} 0 \\ 1\end{bmatrix} \alpha_2 ~\bigg |~ 0.5 \, \alpha_1^2 - \alpha_2 + \alpha_3 = 1,~ \alpha_1,\alpha_2,\alpha_3 \in [\shortminus 1,1] \bigg\}
\end{split}
\end{equation*}
represent the regions $0.5\, x_{(1)}^2 \leq x_{(2)}$ and $0.5 \, x_{(1)}^2 > x_{(2)}$, respectively. The resulting image is visualized in Fig. \ref{fig:numExample}. Computation of the image takes $0.02$ seconds, and the resulting CPZ has $p = 12$ factors, $h = 13$ generators, $m = 8$ constraints, and $q = 85$ constraint generators, so that the representation size is $\mu = 1892$.

\begin{figure}
\centering
  \psfragfig[width=0.95\columnwidth]{./Figures/example}{
  \psfrag{a}[c][c]{$x_1$}
  \psfrag{b}[c][c]{\rotatebox[origin=c]{180}{$x_2$}}
  }
	\caption{Visualization of the polytope $\mathcal{P}$ (left, red), the boundary of the region $0.5~x_1^2 \leq x_2$ in \eqref{eq:nonlinFun} (left, blue), and the computed image of $\mathcal{P}$ under the nonlinear function $f(x)$ in \eqref{eq:nonlinFun} (right).}
	\label{fig:numExample}
\end{figure}


\section{Conclusion}

We introduced constrained polynomial zonotopes, a novel non-convex set representation that is closed under linear map, Minkowski sum, Cartesian product, convex hull, intersection, union, and quadratic and higher-order maps. We derived closed-form expressions for all relevant set operations and showed that the computational complexity of all relevant set operations is at most polynomial in the representation size. In addition, we derived closed-form expressions for the representation of zonotopes, polytopes, polynomial zonotopes, Taylor models, and ellipsoids as constrained polynomial zonotopes. Moreover, we demonstrated how to enclose constrained polynomial zonotopes by simpler set representations. In combination with our efficient techniques for representation size reduction, constrained polynomial zonotopes are well suited for many algorithms that compute with sets.

\begin{acknowledgements}
We gratefully acknowledge financial support by the project justITSELF funded by   the European Research Council (ERC) under grant number 817629 and the German Research Foundation (DFG) project faveAC under grant
number AL 1185/5-1. 
\end{acknowledgements}

\begin{appendices}

\section{Proof for the Union}
\label{app:proofUnion}

We now provide the proof for the closed-form expression for the union of two CPZs as specified in Theorem~\ref{theo:union}. As a prerequisite for the proof, we introduce the $\operator{constrDom}$ operation:

\begin{definition}
	Given a constraint defined by the constraint generator matrix $A \in \R^{m \times q}$, the constraint vector $b \in \R^{m}$, and the constraint exponent matrix $\CE \in \mathbb{N}_{0}^{p \times q}$, $\operator{constrDom}$ returns the set of values satisfying the constraint:
	\begin{equation*}
		\operator{constrDom}(A,b,\CE) = \bigg \{ \alpha ~\bigg |~ \sum_{i=1}^{q} \bigg( \prod_{k=1}^p \alpha_k^{\CE_{(k,i)}} \bigg) A_{(\cdot,i)} = b, ~ \alpha_k \in [\shortminus 1,1] \bigg \},
	\end{equation*}
	where $\alpha = [\alpha_1 ~ \dots ~\alpha_p]^T$.
	\label{def:constrDom} \hfill $\square$
\end{definition}
Using Def. \ref{def:constrDom}, it is straightforward to see that the following identity holds:
\begin{equation}
\setlength{\jot}{8pt}
	\begin{split}
		& \bigg \langle c,G,E,\begin{bmatrix} A_1 & \mathbf{0} \\ \mathbf{0} & A_2 \end{bmatrix},\begin{bmatrix} b_1 \\ b_2 \end{bmatrix}, \begin{bmatrix} \CE_1 & \CE_2 \end{bmatrix} \bigg \rangle_{CPZ} = \\
		& ~~~~~~~~~ \bigg \{ c + \sum_{i=1}^{h} \bigg( \prod_{k=1}^{p} \alpha_k^{E_{(k,i)}} \bigg) G_{(\cdot,i)} ~ \bigg | ~ \sum_{i=1}^{q_1} \bigg( \prod_{k=1}^p \alpha_k^{\CE_{1(k,i)}} \bigg) A_{1(\cdot,i)} = b_1, ~ \alpha \in \mathcal{D} \bigg \},
	\end{split}
	\label{eq:domElim}
\end{equation}
where $\mathcal{D} = \operator{constrDom}(A_2,b_2,\CE_2)$ and $\alpha = [\alpha_1 ~\dots ~\alpha_p]^T$. As another prerequisite, we introduce the following lemma:

\begin{lemma}
	Given a constant offset $c \in \Rn$, a generator matrix $G \in \R^{n \times h}$, an exponent matrix $E \in \mathbb{N}_0^{p \times h}$, and two domains $\mathcal{D}_1,\mathcal{D}_2 \subseteq [\shortminus \mathbf{1},\mathbf{1}] \subset \R^p$, it holds that
	\begin{equation*}
	\setlength{\jot}{12pt}
		\begin{split}
			& \bigg\{ c + \sum_{i=1}^{h} \bigg( \prod_{k=1}^{p} \alpha_k^{E_{(k,i)}} \bigg) G_{(\cdot,i)} ~ \bigg | ~ \alpha \in (\mathcal{D}_1 \cup \mathcal{D}_2) \bigg\} = \\
			& ~~~~ \underbrace{\bigg\{ c + \sum_{i=1}^{h} \bigg( \prod_{k=1}^{p} \alpha_k^{E_{(k,i)}} \bigg) G_{(\cdot,i)} ~ \bigg | ~ \alpha \in \mathcal{D}_1 \bigg\}}_{\mathcal{CPZ}_1} \cup \underbrace{\bigg\{ c + \sum_{i=1}^{h} \bigg( \prod_{k=1}^{p} \alpha_k^{E_{(k,i)}} \bigg) G_{(\cdot,i)} ~ \bigg | ~ \alpha \in \mathcal{D}_2 \bigg\}}_{\mathcal{CPZ}_2},
		\end{split}
	\end{equation*}
	where $\alpha = [\alpha_1 ~ \dots ~ \alpha_p]^T$.
	\label{lemma:union}
\end{lemma}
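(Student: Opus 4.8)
\textbf{Proof plan for Lemma~\ref{lemma:union}.}
The plan is to prove the set equality by the usual two-inclusions argument, exploiting the fact that the map
\[
    \varphi(\alpha) = c + \sum_{i=1}^{h} \left( \prod_{k=1}^{p} \alpha_k^{E_{(k,i)}} \right) G_{(\cdot,i)}
\]
is a well-defined function from $[-\mathbf 1,\mathbf 1]\subset\R^p$ into $\Rn$, so that each of the three sets appearing in the statement is simply the image of a domain under $\varphi$. With this observation the Lemma reduces to the elementary identity $\varphi(\mathcal D_1\cup\mathcal D_2)=\varphi(\mathcal D_1)\cup\varphi(\mathcal D_2)$, which holds for the image of \emph{any} function under a union of two subsets of its domain; the hypothesis $\mathcal D_1,\mathcal D_2\subseteq[-\mathbf 1,\mathbf 1]$ only serves to guarantee that $\varphi$ is defined on all of $\mathcal D_1\cup\mathcal D_2$ (in particular all the powers $\alpha_k^{E_{(k,i)}}$ make sense, even with the convention $0^0=1$).

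First I would fix an arbitrary point $x$ in the left-hand set; by definition there exists $\alpha\in\mathcal D_1\cup\mathcal D_2$ with $x=\varphi(\alpha)$. Then either $\alpha\in\mathcal D_1$, in which case $x\in\varphi(\mathcal D_1)$, or $\alpha\in\mathcal D_2$, in which case $x\in\varphi(\mathcal D_2)$; in both cases $x$ lies in the right-hand union, giving the inclusion ``$\subseteq$''. Conversely, if $x$ belongs to the right-hand union then $x=\varphi(\alpha)$ for some $\alpha\in\mathcal D_1$ or some $\alpha\in\mathcal D_2$; since $\mathcal D_1\subseteq\mathcal D_1\cup\mathcal D_2$ and $\mathcal D_2\subseteq\mathcal D_1\cup\mathcal D_2$, that same $\alpha$ witnesses $x\in\varphi(\mathcal D_1\cup\mathcal D_2)$, which is the left-hand set. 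This establishes ``$\supseteq$'' and hence equality.

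There is essentially no obstacle here: the only point requiring any care is to notice that the expression defining $\varphi$ depends on $\alpha$ alone (the same $c$, $G$, $E$ are used in all three sets), so that ``choosing $\alpha$'' is all that distinguishes the membership conditions, and that every $\alpha$ under consideration lies in the common domain $[-\mathbf 1,\mathbf 1]$ so $\varphi(\alpha)$ is well-defined throughout. I would phrase the whole argument in one or two lines around this ``image of a union equals union of images'' principle, without any computation involving the $G_{(\cdot,i)}$ or the exponents $E_{(k,i)}$.
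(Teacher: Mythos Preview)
Your proposal is correct and takes essentially the same approach as the paper: both reduce the statement to the elementary identity $\varphi(\mathcal D_1\cup\mathcal D_2)=\varphi(\mathcal D_1)\cup\varphi(\mathcal D_2)$ for the polynomial map $\varphi$. The paper compresses the argument into a single chain of equalities using the logical disjunction in the definition of union \eqref{eq:defUnion}, whereas you spell out the two inclusions explicitly, but the content is the same.
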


\begin{proof}
	Using the definition of CPZs in Def.~\ref{def:CPZ} and the definition of the union in \eqref{eq:defUnion}, we obtain
	\begin{align*}
		& \bigg\{ c + \sum_{i=1}^{h} \bigg( \prod_{k=1}^{p} \alpha_k^{E_{(k,i)}} \bigg) G_{(\cdot,i)} ~ \bigg | ~ \alpha \in (\mathcal{D}_1 \cup \mathcal{D}_2) \bigg\} = \\
		& ~ \\
		& \bigg\{ c + \sum_{i=1}^{h} \bigg( \prod_{k=1}^{p} \alpha_k^{E_{(k,i)}} \bigg) G_{(\cdot,i)} ~ \bigg | ~ \alpha \in \mathcal{D}_1 \vee \alpha \in \mathcal{D}_2 \bigg\} \overset{\substack{\text{Def.}~\ref{def:CPZ} \\ \vspace{-3pt}}}{=} \\
		& ~ \\
		& \{ x ~ | ~ x \in \mathcal{CPZ}_1 \vee x \in \mathcal{CPZ}_2 \} \overset{\substack{\eqref{eq:defUnion} \\ \vspace{-3pt}}}{=} \mathcal{CPZ}_1 \cup \mathcal{CPZ}_2,
	\end{align*}
	which concludes the proof. \hfill $\square$
\end{proof}
The outline of the proof is as follows: We first show in Sec.~\ref{sec:proofUnion1} that the constraints of the resulting CPZ $\langle c,G,E,A,b,\CE \rangle_{CPZ}$ from Theorem~\ref{theo:union} restrict the values for the factors $\alpha_k$ to the domain $\mathcal{D} = \mathcal{D}_1 \cup \mathcal{D}_2$ corresponding to the union of two domains $\mathcal{D}_1$ and $\mathcal{D}_2$. Afterward, in Sec.~\ref{sec:proofUnion2}, we apply Lemma \ref{lemma:union} to express the resulting CPZ from Theorem~\ref{theo:union} as $\langle c,G,E,A,b,\CE \rangle_{CPZ} = \mathcal{S}_1 \cup \mathcal{S}_2$, where $\mathcal{S}_1,\mathcal{S}_2$ represent the sets corresponding to the domains $\mathcal{D}_1,\mathcal{D}_2$, respectively. Finally, we show in Sec.~\ref{sec:proofUnion3} that $\mathcal{S}_1 = \mathcal{CPZ}_1$ and $\mathcal{S}_2 = \mathcal{CPZ}_2$ holds, which concludes the proof.

\subsection{Domain Defined by the Constraints}	
\label{sec:proofUnion1}

First, we show that the constraints of the resulting CPZ from Theorem~\ref{theo:union} restrict the values for the factors $\alpha_k$ to the domain $\mathcal{D} = \mathcal{D}_1 \cup \mathcal{D}_2$. The matrices $\widehat{A},\widehat{\CE}$ and the vector $\widehat{b}$ in Theorem~\ref{theo:union} define the constraint
	\begin{equation}
		\alpha_1 \alpha_2 = 1.
		\label{eq:proofUnion1}
	\end{equation}
	The only solutions for \eqref{eq:proofUnion1} within the domain $\alpha_1 \in [\shortminus 1,1]$, $\alpha_2 \in [\shortminus 1,1]$ are the two points $\alpha_1 = 1$, $\alpha_2 = 1$ and $\alpha_1 = \shortminus 1$, $\alpha_2 = \shortminus 1$. The constraint \eqref{eq:proofUnion1} therefore restricts the values for the factors $\alpha_k$ to the domain
	\begin{equation}
	\setlength{\jot}{12pt}
	\begin{split}
		\widehat{\mathcal{D}} & = \operator{constrDom}(\widehat{A},\widehat{b},\widehat{R}) \\
		& = \underbrace{ 1 \times 1 \times [\shortminus 1,1] \times \dotsc \times [\shortminus 1,1]}_{\widehat{\mathcal{D}}_1} \cup \underbrace{ \shortminus 1 \times \shortminus 1 \times [\shortminus 1,1] \times \dotsc \times [\shortminus 1,1] }_{\widehat{\mathcal{D}}_2}.
		\label{eq:unionDomain}
	\end{split}
	\end{equation}
	The matrices $\overline{A},\overline{\CE}$ and the vector $\overline{b}$ in Theorem~\ref{theo:union} define the constraint
	\begin{equation}
	\begin{split}
		& \sum_{i=1}^{\overline{q}} \bigg( \prod_{k=1}^{p_1+p_2+2} \alpha_k^{\overline{\CE}_{(k,i)}} \bigg) \overline{A}_{(\cdot,i)} =  \alpha_1 - \alpha_2 + \frac{1}{2} f_1(\alpha) - \frac{1}{2} \alpha_1 f_1(\alpha) - \frac{1}{2} f_2(\alpha)  \\
		& ~~~~~~~~~~~~~~~~~~~~~~~~~~~~~~~~~~~~~ - \frac{1}{2} \alpha_1 f_2(\alpha) - \frac{1}{4} f_1(\alpha)f_2(\alpha) + \frac{1}{4} \alpha_1 f_1(\alpha) f_2(\alpha) = \\
		& ~ \\
		& ~~~~~~~~~~~~~~~~~~~ =  \underbrace{\left(1 + \alpha_1 + \frac{1}{2} f_1(\alpha) (1 - \alpha_1) \right)\left(1 - \frac{1}{2} f_2(\alpha) \right) - \alpha_2 - 1}_{g(\alpha)} = 0 = \overline{b},
		\label{eq:union2}
	\end{split}
	\end{equation}
	where
	\begin{equation}
		f_1(\alpha) = \frac{1}{p_1} \sum_{k=3}^{p_1+2} \alpha_{(k)}^2, ~~ f_2(\alpha) = \frac{1}{p_2} \sum_{k=p_1+3}^{p_1 + p_2 + 2} \alpha_{(k)}^2,
		\label{eq:union2b}
	\end{equation}
	with $\overline{q}$ denoting the number of columns of matrix $\overline{A}$ and $\alpha = [\alpha_1~\dotsc~\alpha_{p_1 + p_2 + 2}]^T$. Let $\overline{\mathcal{D}} = \operator{constrDom}(\overline{A},\overline{b},\overline{R})$ be the restricted domain for the factor values corresponding to the constraint $g(\alpha) = 0$ in \eqref{eq:union2}. Then the factor domain $\mathcal{D}$ for the combination of the constraints defined by $\widehat{A},\widehat{b},\widehat{\CE}$ and $\overline{A},\overline{b},\overline{\CE}$ is
	\begin{equation}
	\setlength{\jot}{12pt}
	\begin{split}
		\mathcal{D} &= \operator{constrDom}\bigg( \begin{bmatrix} \widehat{A} & 0 \\ 0 & \overline{A} \end{bmatrix}, \begin{bmatrix} \widehat{b} \\ \overline{b} \end{bmatrix}, \begin{bmatrix} \widehat{R} & \overline{R} \end{bmatrix} \bigg) \\
		 &= \operator{constrDom}(\widehat{A},\widehat{b},\widehat{\CE}) \cap \operator{constrDom}(\overline{A},\overline{b},\overline{\CE}) = \widehat{\mathcal{D}} \cap \overline{\mathcal{D}} \overset{\substack{\eqref{eq:unionDomain} \\ \vspace{-3pt}}}{=} \underbrace{(\widehat{\mathcal{D}}_1 \cap \overline{\mathcal{D}})}_{\mathcal{D}_1} \cup \underbrace{(\widehat{\mathcal{D}}_2 \cap \overline{\mathcal{D}})}_{\mathcal{D}_2}.
	\end{split}
		\label{eq:unionOverallDom}
	\end{equation}
	To compute the domain $\mathcal{D}_1 = \widehat{\mathcal{D}}_1 \cap \overline{\mathcal{D}}$ in \eqref{eq:unionOverallDom}, we insert the values $\alpha_1 = 1$, $\alpha_2 = 1$ from $\widehat{\mathcal{D}}_1$ into the constraint $g(\alpha) = 0$. This yields the constraint $f_2(\alpha) = 0$ according to \eqref{eq:union2}, which is only satisfiable for $\alpha_{p_1 + 3} = 0, \dots, \alpha_{p_1 + p_2 + 2} = 0$. Moreover, inserting the values $\alpha_1 = \shortminus 1$, $\alpha_2 = \shortminus 1$ from $\widehat{\mathcal{D}}_2$ into the constraint $g(\alpha) = 0$ yields according to \eqref{eq:union2} the constraint 
	\begin{equation}
		f_1(\alpha) \underbrace{\bigg( 1 - \frac{1}{2} \underbrace{f_2(\alpha)}_{\in [0,1]} \bigg)}_{\in [0.5,1]} = 0,
	\end{equation}
	which is only satisfiable for $f_1(\alpha) = 0$. Since the constraint $f_1(\alpha) = 0$ is only satisfiable for $\alpha_{3} = 0, \dots, \alpha_{p_1 + 2} = 0$ according to \eqref{eq:union2b}, the constraint $g(\alpha) = 0$ is consequently also only satisfiable for $\alpha_{3} = 0, \dots, \alpha_{p_1 + 2} = 0$. In summary, the domain for the combination of the constraint $\alpha_1 \alpha_2 = 1$ in \eqref{eq:proofUnion1} and the constraint $g(\alpha) = 0$ in \eqref{eq:union2} is therefore
	\begin{equation}
		\begin{split}
			\mathcal{D} &= \operator{constrDom}\bigg( \underbrace{\begin{bmatrix} \widehat{A} & 0 \\ 0 & \overline{A} \end{bmatrix}}_{A_U}, \underbrace{\begin{bmatrix} \widehat{b} \\ \overline{b} \end{bmatrix}}_{b_U}, \underbrace{\begin{bmatrix} \widehat{R} & \overline{R} \end{bmatrix}}_{R_U} \bigg) \overset{\eqref{eq:unionOverallDom}}{=} \underbrace{(\widehat{\mathcal{D}}_1 \cap \overline{\mathcal{D}})}_{\mathcal{D}_1} \cup \underbrace{(\widehat{\mathcal{D}}_2 \cap \overline{\mathcal{D}})}_{\mathcal{D}_2} \\
			& = \underbrace{ \big \{ \begin{bmatrix} 1 & 1 & \alpha_3 & \dots & \alpha_{p_1 + 2} & \mathbf{0} \end{bmatrix}^T ~ | ~ \alpha_3,\dots, \alpha_{p_1 + 2} \in [\shortminus 1,1] \big \} }_{\mathcal{D}_1} \cup \\
			& ~~~~ \underbrace{ \big \{ \begin{bmatrix} \shortminus 1 & \shortminus 1 & \mathbf{0} & \alpha_{p_1 + 3} & \dots & \alpha_{p_1 + p_2 + 2} \end{bmatrix}^T ~ | ~ \alpha_{p_1 + 3},\dots, \alpha_{p_1 + p_2 + 2} \in [\shortminus 1,1] \big \} }_{\mathcal{D}_2},
		\end{split}
		\label{eq:proofUnion3}
	\end{equation}	
	which enables us to apply Lemma~\ref{lemma:union} in the next section.

\subsection{Reformulation as Union of Sets} 
\label{sec:proofUnion2}

We now prove that the resulting CPZ $\langle c,G,E,A,b,\CE \rangle_{CPZ}$ from Theorem~\ref{theo:union} defines the union of two sets $\mathcal{S}_1$ and $\mathcal{S}_2$. Using the domain $\mathcal{D}$ as defined in \eqref{eq:proofUnion3} and introducing
	\begin{equation}
		A_L = \begin{bmatrix} A_1 & \mathbf{0} & \shortminus 0.5 \, b_1 \\ \mathbf{0} & A_2 & 0.5 \, b_2 \end{bmatrix}, ~~ b_L = \begin{bmatrix} 0.5 \, b_1 \\ 0.5 \, b_2 \end{bmatrix}, ~~ \CE_L = \begin{bmatrix} \mathbf{0} & \mathbf{0} & 1 \\ \mathbf{0} & \mathbf{0} & 0 \\ \CE_1 & \mathbf{0} & \mathbf{0} \\ \mathbf{0} & \CE_2 & \mathbf{0} \end{bmatrix},
		\label{eq:proofUnion6}
	\end{equation}
the resulting CPZ $\langle c,G,E,A,b,\CE \rangle_{CPZ}$ from Theorem~\ref{theo:union} can be equivalently represented as
	\begin{align}
		& \bigg \langle c,G,E, \underbrace{\begin{bmatrix} \widehat{A} & \mathbf{0} & \mathbf{0} & \mathbf{0} & 0 \\ \mathbf{0} & \overline{A} & \mathbf{0} & \mathbf{0} & 0 \\ \mathbf{0} & \mathbf{0} & A_1 & \mathbf{0} & \shortminus 0.5 \, b_1 \\ \mathbf{0} & \mathbf{0} & \mathbf{0} & A_2 & 0.5 \, b_2 \end{bmatrix}}_{A}, \underbrace{\begin{bmatrix} \widehat{b} \\ \overline{b} \\ 0.5 \, b_1 \\ 0.5 \, b_2 \end{bmatrix}}_{b}, \underbrace{\begin{bmatrix} \widehat{\CE} & \overline{\CE} & \begin{bmatrix} \mathbf{0} & \mathbf{0} & 1 \\ \mathbf{0} & \mathbf{0} & 0 \\ \CE_1 & \mathbf{0} & \mathbf{0}\\ \mathbf{0} & \CE_2 & \mathbf{0} \end{bmatrix}  \end{bmatrix}}_{\CE} \bigg \rangle_{CPZ} \nonumber \\[10pt]
		& ~ \overset{\substack{\eqref{eq:proofUnion3},\eqref{eq:proofUnion6} \\ \vspace{-2pt}}}{=} \bigg \langle c,G,E,\begin{bmatrix} A_U & \mathbf{0} \\ \mathbf{0} & A_L \end{bmatrix}, \begin{bmatrix} b_U \\ b_L \end{bmatrix}, \begin{bmatrix} R_U & R_L \end{bmatrix} \bigg \rangle_{CPZ} \nonumber \\[10pt]
		& ~ \overset{\substack{\eqref{eq:domElim},\eqref{eq:proofUnion3} \\ \vspace{-2pt}}}{=}\bigg \{ c + \sum_{i=1}^{h} \bigg( \prod_{k=1}^{p} \alpha_k^{E_{(k,i)}} \bigg) G_{(\cdot,i)} ~ \bigg | ~ \sum_{i=1}^{q_L} \bigg( \prod_{k=1}^{p} \alpha_k^{\CE_{L(k,i)}} \bigg) A_{L(\cdot,i)} = b_L, ~ \alpha \in \mathcal{D}  \bigg \} \label{eq:proofUnion7}\\[10pt]
		& \overset{\substack{\text{Lemma}~\ref{lemma:union} \\ \vspace{-2pt} \\ \mathcal{D} = \mathcal{D}_1 \cup \, \mathcal{D}_2 \\ \vspace{-3pt}}}{=}  \underbrace{\bigg \{ c + \sum_{i=1}^{h} \bigg( \prod_{k=1}^{p} \alpha_k^{E_{(k,i)}} \bigg) G_{(\cdot,i)} ~ \bigg | ~ \sum_{i=1}^{q_L} \bigg( \prod_{k=1}^{p} \alpha_k^{\CE_{L(k,i)}} \bigg) A_{L(\cdot,i)} = b_L, ~ \alpha \in \mathcal{D}_1  \bigg \}}_{ \mathcal{S}_1} \nonumber \\
		& ~~~~~~~~~~ \cup \underbrace{\bigg \{ c + \sum_{i=1}^{h} \bigg( \prod_{k=1}^{p} \alpha_k^{E_{(k,i)}} \bigg) G_{(\cdot,i)} ~ \bigg | ~ \sum_{i=1}^{q_L} \bigg( \prod_{k=1}^{p} \alpha_k^{\CE_{L(k,i)}} \bigg) A_{L(\cdot,i)} = b_L, ~ \alpha \in \mathcal{D}_2  \bigg \}}_{\mathcal{S}_2}, \nonumber
	\end{align}
	where $p = p_1 + p_2 + 2$, $q_L = q_1 + q_2 + 1$, and $\alpha = [\alpha_1~\dotsc~\alpha_p]^T$. 
	
\subsection{Equivalence of Sets}	
\label{sec:proofUnion3}

It remains to show that $\mathcal{S}_1 = \mathcal{CPZ}_1$ and $\mathcal{S}_2 = \mathcal{CPZ}_2$. Inserting the definition of the domain $\mathcal{D}_1$ in \eqref{eq:proofUnion3} into the definition of the set $\mathcal{S}_1$ in \eqref{eq:proofUnion7} yields
	\begin{align*}
			& \mathcal{S}_1 \overset{\eqref{eq:proofUnion7}}{=} \bigg \{ c + \sum_{i=1}^{h} \bigg( \prod_{k=1}^{p} \alpha_k^{E_{(k,i)}} \bigg) G_{(\cdot,i)} ~ \bigg | ~ \sum_{i=1}^{q_L} \bigg( \prod_{k=1}^{p} \alpha_k^{\CE_{L(k,i)}} \bigg) A_{L(\cdot,i)} = b_L, ~ \alpha \in \mathcal{D}_1  \bigg \} \overset{\eqref{eq:proofUnion3}}{=} \\[5pt]
			&  \bigg \{ c + \sum_{i=1}^{h} \bigg( \prod_{k=1}^{p} \alpha_k^{E_{(k,i)}} \bigg) G_{(\cdot,i)} ~ \bigg | ~ \sum_{i=1}^{q_L} \bigg( \prod_{k=1}^{p} \alpha_k^{\CE_{L(k,i)}} \bigg) A_{L(\cdot,i)} = b_L, ~ \alpha_1,\alpha_2 = 1, \nonumber \\
			& ~~~~~~~~~~~~~~~~~~~~~~~~~~~~~~~~ \alpha_3,\dots,\alpha_{p_1 + 2} \in [\shortminus 1,1], ~~ \alpha_{p_1+3}, \dots, \alpha_{p_1 + p_2 + 2} = 0  \bigg \} \overset{\eqref{eq:proofUnion6}, \text{Thm.}~\ref{theo:union}}{=} \\[5pt]
			&  \bigg \{ \underbrace{0.5 (c_1 + c_2) + 0.5 (c_1 - c_2) \alpha_1}_{\overset{\substack{\scriptscriptstyle \alpha_1 = 1 \\ \vspace{-4pt}}}{=} c_1} + \sum_{i=1}^{h_1} \bigg( \prod_{k=1}^{p_1} \alpha_{2+k}^{E_{1(k,i)}} \bigg) G_{1(\cdot,i)} + \sum_{i=1}^{h_2} \underbrace{\bigg( \prod_{k=1}^{p_2} \alpha_{2+p_1+k}^{E_{2(k,i)}} \bigg)}_{= 0} G_{2(\cdot,i)} ~ \bigg | \\
			& ~\sum_{i=1}^{q_1} \bigg( \prod_{k=1}^{p_1} \alpha_{2+k}^{\CE_{1(k,i)}} \bigg) A_{1(\cdot,i)} = \underbrace{0.5 \,b_1 + 0.5 \, b_1 \alpha_1}_{\overset{\substack{\scriptscriptstyle \alpha_1 = 1 \\ \vspace{-4pt} }}{=} b_1}, \, \sum_{i=1}^{q_2} \underbrace{\bigg( \prod_{k=1}^{p_2} \alpha_{2+p_1+k}^{\CE_{2(k,i)}} \bigg)}_{ = 0} A_{2(\cdot,i)} = \underbrace{ 0.5\, b_2 - 0.5 \, b_2 \alpha_1}_{\overset{\substack{\scriptscriptstyle \alpha_1 = 1 \\ \vspace{-4pt}}}{=} \mathbf{0}}, \\
			& ~~~ \alpha_1,\alpha_2 = 1,~~ \alpha_3,\dots,\alpha_{p_1 + 2} \in [\shortminus 1,1], ~~ \alpha_{p_1+3}, \dots, \alpha_{p_1 + p_2 + 2} = 0 \bigg \} =  \\[5pt]
			& \underbrace{\bigg \{ c_1 + \sum_{i=1}^{h_1} \bigg( \prod_{k=1}^{p_1} \alpha_{2+k}^{E_{1(k,i)}} \bigg) G_{1(\cdot,i)} ~ \bigg | ~ \sum_{i=1}^{q_1} \bigg( \prod_{k=1}^{p_1} \alpha_{2+k}^{\CE_{1(k,i)}} \bigg) A_{1(\cdot,i)} = b_1,~ \alpha_3,\dots,\alpha_{p_1 + 2} \in [\shortminus 1,1] \bigg \}}_{ = \mathcal{CPZ}_1}. \\
	\end{align*}
	The proof that $\mathcal{S}_2 = \mathcal{CPZ}_2$ is similar to the proof for $\mathcal{S}_1$ and therefore omitted.

\section{Rescaling}
\label{app:rescaling}

We now show how the set obtained by rescaling as described in Sec.~\ref{sec:rescaling} can be represented as a CPZ. For this, we first introduce the operation \operator{subset}:

\begin{proposition}
	(Subset) Given $\mathcal{CPZ} = \langle c,G,E,A,b,R\rangle_{CPZ} \subset \mathbb{R}^n$, the index of one factor $\ind \in \{1,\dots,p\}$, and an interval $[l,u] \subseteq [\shortminus 1,1]$, the operation $\operator{subset}$ substitutes the domain for the factor $\alpha_r$ by $\alpha_r \in [l,u]$, which yields a CPZ that is a subset of $\mathcal{CPZ}$:
	\begin{align*}
		& \operator{subset}\big(\mathcal{CPZ},\ind,[l,u]\big) = \\[5pt]
		& \bigg \{ c + \sum_{i=1}^{h} \bigg( \prod_{k=1}^p \alpha_k^{E_{(k,i)}} \bigg) G_{(\cdot,i)} ~ \bigg | ~ \sum_{i=1}^{q} \bigg( \prod_{k=1}^p \alpha_k^{\CE_{(k,i)}} \bigg) A_{(\cdot,i)} = b, ~\alpha_k = [\shortminus 1,1], ~\alpha_r \in [l,u] \bigg \} \\[10pt]
		& = \Big \langle c,\big[ \widehat{G}_{1} ~ \dots ~ \widehat{G}_{h} \big], \big[ \widehat{E}_{1} ~ \dots ~ \widehat{E}_{h} \big],\big[ \widehat{A}_{1} ~ \dots ~ \widehat{A}_{q} \big],b, \big[ \widehat{R}_{1} ~ \dots ~ \widehat{R}_{q} \big] \Big \rangle_{CPZ} \subseteq \mathcal{CPZ}
	\end{align*}	
	with
	\begin{equation*}
	\setlength{\jot}{12pt}
	\begin{gathered}
		\widehat{E}_{i} = \begin{bmatrix} E_{(\{1,\dots ,\ind-1 \},i)} & E_{(\{1,\dots ,\ind-1 \},i)} & \dots & E_{(\{1,\dots ,\ind-1 \},i)} & E_{(\{1, \dots ,\ind-1\},i)} \\ \vspace{-8pt} \\ 0 & 1 & \dots &  E_{(\ind,i)}-1 & E_{(\ind,i)} \\  E_{(\{\ind+1,\dots ,p \},i)} & E_{(\{\ind+1,\dots ,p \},i)} & \dots & E_{(\{\ind+1,\dots ,p \},i)} & E_{(\{\ind+1, \dots ,p\},i)} \end{bmatrix}, \\
		\widehat{R}_{i} = \begin{bmatrix} R_{(\{1,\dots ,\ind-1 \},i)} & R_{(\{1,\dots ,\ind-1 \},i)} & \dots & R_{(\{1,\dots ,\ind-1 \},i)} & R_{(\{1, \dots ,\ind-1\},i)} \\ \vspace{-8pt} \\ 0 & 1 & \dots &  R_{(\ind,i)}-1 & R_{(\ind,i)} \\  R_{(\{\ind+1,\dots ,p \},i)} & R_{(\{\ind+1,\dots ,p \},i)} & \dots & R_{(\{\ind+1,\dots ,p \},i)} & R_{(\{\ind+1, \dots ,p\},i)} \end{bmatrix}, \\
		\widehat{G}_{i} = \begin{bmatrix} d_{i,0} \, G_{(\cdot,i)} & \dots & d_{i,E_{(\ind,i)}} G_{(\cdot,i)} \end{bmatrix}, ~ \widehat{A}_{i} = \begin{bmatrix} o_{i,0} \, A_{(\cdot,i)} & \dots & o_{i,R_{(\ind,i)}} A_{(\cdot,i)}	\end{bmatrix}, \\
		d_{i,j} =  v_1^{E_{(\ind,i)}} \, v_2^j \, {{E_{(\ind,i)}}\choose{j}},~ o_{i,j} =  v_1^{R_{(\ind,i)}} \, v_2^j \, {{R_{(\ind,i)}}\choose{j}} , ~ v_1 = \frac{u+l}{2}, ~ v_2 = \frac{u-l}{u+l},
	\end{gathered}
	\end{equation*}
	where ${w} \choose {z}$, $w,z \in \mathbb{N}_0$ denotes the binomial coefficient. The \operator{compactGen} and \operator{compactCon} operations are applied to obtain a regular CPZ.
	\label{prop:subset}
\end{proposition}
\begin{proof}
The domain $\alpha_r \in [l,u]$ for the factor $\alpha_r$ can be equivalently represented by an auxiliary variable $\widehat{\alpha}_r \in [\shortminus 1,1]$:
	\begin{equation}
	\begin{split}
		& \big \{ \alpha_\ind ~\big|~ \alpha_\ind \in [l,u] \big \} = \big \{ 0.5 (u+l) + 0.5(u-l) \, \widehat{\alpha}_r ~\big|~ \widehat{\alpha}_r \in [\shortminus 1,1] \big\} \\[5pt]
		& = \bigg\{ \underbrace{\frac{u+l}{2}}_{v_1} \Big(1 + \underbrace{\frac{u-l}{u+l}}_{v_2} \widehat{\alpha}_r \Big)~\bigg|~ \widehat{\alpha}_r \in [\shortminus 1,1] \bigg\} = \big \{ v_1 (1 + v_2 \, \widehat{\alpha}_r) ~\big|~ \widehat{\alpha}_r \in [\shortminus 1,1] \big\}.
	\end{split}
	\label{eq:subset1}
	\end{equation}
	Moreover, due to the properties of the binomial coefficient, it holds that
	\begin{equation}
	\begin{split}
		& \big( v_1 (1 + v_2 \, \widehat{\alpha}_r)\big)^{E_{(r,i)}} = d_{i,0} + d_{i,1} \, \widehat{\alpha}_{\ind} + d_{i,2} \, \widehat{\alpha}_{\ind}^2 + \dots + d_{i,E_{(\ind,i)}} \widehat{\alpha}_{\ind}^{E_{(\ind,i)}} \\
		& \big( v_1 (1 + v_2 \, \widehat{\alpha}_r)\big)^{R_{(r,i)}} = o_{i,0} + o_{i,1} \, \widehat{\alpha}_{\ind} + o_{i,2} \, \widehat{\alpha}_{\ind}^2 + \dots + o_{i,R_{(\ind,i)}} \widehat{\alpha}_{\ind}^{R_{(\ind,i)}}.
	\end{split}
	\label{eq:subset2}
	\end{equation}
	Using \eqref{eq:subset1} and \eqref{eq:subset2}, we obtain
	\begin{align*}
		& \operator{subset}\big(\mathcal{CPZ},\ind,[l,u]\big) = \\[5pt]
		& \bigg \{ c + \sum_{i=1}^{h} \bigg( \prod_{k=1}^p \alpha_k^{E_{(k,i)}} \bigg) G_{(\cdot,i)} ~ \bigg | ~ \sum_{i=1}^{q} \bigg( \prod_{k=1}^p \alpha_k^{\CE_{(k,i)}} \bigg) A_{(\cdot,i)} = b, ~\alpha_k = [\shortminus 1,1], ~\alpha_r \in [l,u] \bigg \} \\[10pt]
		& \overset{\eqref{eq:subset1}}{=} \bigg \{ c + \sum_{i=1}^{h} \bigg( \prod_{\substack{k=1 \\ k \neq r}}^p \alpha_k^{E_{(k,i)}} \bigg) {\underbrace{\big( v_1 (1 + v_2 \, \widehat{\alpha}_r)\big)}_{\alpha_r}}^{E_{(r,i)}} G_{(\cdot,i)} ~ \bigg | \\
		& ~~~~~~~~~~ \sum_{i=1}^{q} \bigg( \prod_{\substack{k=1 \\ k \neq r}}^p \alpha_k^{\CE_{(k,i)}} \bigg) {\underbrace{\big( v_1 (1 + v_2 \, \widehat{\alpha}_r)\big)}_{\alpha_r}}^{R_{(r,i)}} A_{(\cdot,i)} = b, ~\alpha_k,\widehat{\alpha}_r = [\shortminus 1,1] \bigg \} \\[3pt]
		& \overset{\eqref{eq:subset2}}{=} \Big \langle c,\big[ \widehat{G}_{1} ~ \dots ~ \widehat{G}_{h} \big], \big[ \widehat{E}_{1} ~ \dots ~ \widehat{E}_{h} \big],\big[ \widehat{A}_{1} ~ \dots ~ \widehat{A}_{q} \big],b, \big[ \widehat{R}_{1} ~ \dots ~ \widehat{R}_{q} \big] \Big \rangle_{CPZ},
	\end{align*}	
	which concludes the proof.
\end{proof}
The set obtained by rescaling $\mathcal{CPZ} = \langle c,G,E,A,b,R \rangle_{CPZ}$ can be computed by applying the \operator{subset} operation to each factor
\begin{align*}
& \bigg \{ c + \sum_{i=1}^{h} \bigg( \prod_{k=1}^p \alpha_k^{E_{(k,i)}} \bigg) G_{(\cdot,i)} ~ \bigg | ~ \sum_{i=1}^{q} \bigg( \prod_{k=1}^p \alpha_k^{\CE_{(k,i)}} \bigg) A_{(\cdot,i)} = b, ~ [\alpha_1~\dots~\alpha_p]^T \in [l,u] \bigg \} \\[5pt]
& \overset{\substack{\text{Prop.}~\ref{prop:subset} \\ \vspace{-2pt}}}{=} \operator{subset}\Big( \operator{subset} \big( ~ \dots ~ \operator{subset}(\mathcal{CPZ},1,[l_{(1)},u_{(1)}]) ~ \dots ~ \big),p,[l_{(p)},u_{(p)}] \Big), 
\end{align*}
which yields a CPZ.

\end{appendices}

\bibliographystyle{spmpsci}     
\bibliography{kochdumper,cpsGroup}   

\end{document}